\theoremstyle{plain}
\newtheorem{lemma}[thm]{Lemma}
\theoremstyle{definition}
\newtheorem{definition}[thm]{Definition}
\newtheorem{remark}[thm]{Remark}
\theoremstyle{remark}
\numberwithin{equation}{section}
\newtheorem{proposition}[thm]{Proposition}
\newcommand{\m}{\mathfrak{m}}
\renewcommand{\P}{\mathbb{P}}
\newcommand{\DD}{\mathbb{D}}
\newcommand{\SL}{\mathscr{L}}
\renewcommand{\SS}{\mathscr{S}}
\newcommand{\SH}{\mathscr{H}}
\newcommand{\SP}{\mathscr{P}}
\newcommand{\SC}{\mathscr{C}}
\newcommand{\SB}{\mathscr{B}}
\newcommand{\dd}{\partial}
\newcommand{\sse}{\subseteq}
\newcommand{\lr}{\longrightarrow}
\newcommand{\pt}{\text{point}}
\newcommand{\conv}{\operatorname{conv}}
\newcommand{\GL}{\operatorname{GL}}
\newcommand{\ol}{\overline}
\newcommand{\st}{\text{st}}
\newcommand{\convex}{\operatorname{conv}}
\newcommand{\Vol}{\operatorname{Vol}}
\def\Op{{\mathcal O}{\it p}\,}
\newcounter{daggerfootnote}
\newcommand{\bD}{\mathbb{D}}
\newcommand{\bT}{\mathbb{T}}
\newcommand{\cB}{\mathcal{B}}
\newcommand{\cN}{\mathcal{N}}
\newcommand{\oX}{\overline{X}}
\newcommand{\mC}{\mathcal{C}}
\newcommand{\mH}{\mathcal{H}}
\newcommand{\mM}{\mathcal{M}}
\newcommand{\mN}{\mathcal{N}}
\newcommand{\fN}{\mathfrak{N}}
\newcommand{\mA}{\mathcal{A}}
\newcommand{\mB}{\mathcal{B}}
\newcommand{\mP}{\mathcal{P}}
\newcommand{\fB}{\mathfrak{B}}
\newcommand{\CP}{\mathbb{CP}}
\newcommand{\PxP}{\mathbb{CP}^1 \times \mathbb{CP}^1}
\newcommand{\BlIII}{Bl_3(\mathbb{CP}^2)}
\newcommand{\BlIV}{Bl_4(\mathbb{CP}^2)}
\newcommand{\Tpqr}{\Theta^{n_1,n_2,n_3}_{p,q,r}}
\newcommand{\del}{\partial}
\newcommand{\bv}{\textbf{v}}
\newcommand{\bw}{\textbf{w}}
\newcommand{\bbw}{\emph{\textbf{w}}}
\newcommand{\bbv}{\emph{\textbf{v}}}
\newcommand{\bbf}{\emph{\textbf{f}}}
\newcommand{\be}{\emph{\textbf{e}}}
\begin{document}

\title{Full Ellipsoid Embeddings and Toric Mutations\vspace{0.25cm}}
%\subjclass[2010]{Primary: 53D10. Secondary: 53D15, 57R17.}

\author{Roger Casals}
\address{University of California Davis, Dept. of Mathematics, Shields Avenue, Davis, CA 95616, USA}
\email{casals@math.ucdavis.edu}

\author{Renato Vianna}
\address{Institute of Mathematics, Federal University of Rio de Janeiro (UFRJ), Rio de Janeiro, Brazil}
\email{renato@im.ufrj.br}

\begin{abstract} {\sc Abstract:} This article introduces a new method to construct volume-filling symplectic embeddings of 4-dimensional ellipsoids by employing polytope mutations in toric and almost toric varieties. The construction uniformly recovers the full sequences for the Fibonacci Staircase of McDuff-Schlenk, the Pell Staircase of Frenkel-M\"uller and the Cristofaro-Gardiner-Kleinman Staircase, and adds new infinite sequences of ellipsoid embeddings. In addition, we initiate the study of symplectic-tropical curves for almost toric fibrations and emphasize the connection to quiver combinatorics.
\end{abstract}
\thispagestyle{empty}

\maketitle
%\tableofcontents
%%%%%%%%%%%%%%%%%%%%%%%%%%%%%%%%%%%%%%%%%%%%%%%%%%%%%%%%%%%%%%%%%%%%%%
%%%%%%%%%%%%%%%%%%%%%%%%%%%%%%%%%%%%%%%%%%%%%%%%%%%%%%%%%%%%%%%%%%%%%%
%%%%%%%%%%%%%%%%%%%%%%%%%%%%%%%%%%%%%%%%%%%%%%%%%%%%%%%%%%%%%%%%%%%%%%

\section{Introduction}

The novel contribution of the article is the use of polytope mutations as a method for generating full ellipsoid embeddings in infinite staircases. In particular, we give a uniform toric mutation explanation for the full embeddings of ellipsoids, see \cite{LMS13}, in the two classical staircases: the McDuff-Schlenk Fibonacci Staircase \cite{McDuff1,McDuffSchlenk1} and the Frenkel-M\"uller Pell Staircase \cite{PellStaircase}, as well as the more recent discovery by Cristofaro-Gardiner-Kleinman \cite{DCG_Staircase}. In addition, we discuss nine 4-dimensional symplectic toric domains $(X,\omega)$ with a sharp infinite staircase in its symplectic ellipsoid embedding function. These are conjecturally all such domains with this property, as formulated in \cite[Conjecture 6.1]{DCGetAL}. This is the content of the first part of the article, in Sections \ref{sec:Preliminaries} and \ref{sec:proofmain}.

The manuscript also develops new techniques in the study of symplectic-tropical curves in almost toric fibrations, incorporating the works of M. Symington \cite{SymingtonLeung,Symington} and G. Mikhalkin \cite{Mikhalkin1,Mikhalkin2} into the study of symplectic ellipsoid embeddings, and the connections with the theory of cluster algebras and quiver mutations. This is the content of the second part of the article, developed in Section \ref{sec:Symp_Trop}.
% and \ref{sec:quiver}.

%%%%%%%%%%%%%%%%%%%%%%%%%%%%%%%%%%%%%%%%%%%%%%%%%%%%%%%%%%%%%%%%%%%%%%
%%%%%%%%%%%%%%%%%%%%%%%%%%%%%%%%%%%%%%%%%%%%%%%%%%%%%%%%%%%%%%%%%%%%%%

\subsection{Context and Results} Let $(\R^4,\omega_\st)$ be standard symplectic 4-space, $a,b\in\R^{>0}$, and consider the symplectic ellipsoid $$E(a_1,a_2):=\left\{(x_1,y_1,x_2,y_2)\in\R^4:\frac{x_1^2+y_1^2}{a_1}+\frac{x_2^2+y_2^2}{a_2}\leq1\right\}\sse(\R^4,\omega_\st).$$
Let $(X,\omega_X)$ be a 4-dimensional symplectic almost toric domain, as introduced in Section \ref{sec:Preliminaries}. The first goal of this article is to discuss the existence of infinite staircases for the function
$$c_X(a):=\inf\{\sigma\mbox{ such that }\exists i:(E(1,a),\omega_\st)\hookrightarrow  (X,\sigma\cdot\omega_\st)\},$$
where $i$ denotes a {\it symplectic} embedding, i.e. $i^*\omega_X=\omega_\st$. A {\it symplectic} embedding is volume preserving, and thus the volume bound $\pi^2a\leq 2\sigma^2\cdot\Vol(X,\omega_\st)$ implies
$$\frac{\pi\sqrt{a}}{\sqrt{2\Vol(X,\omega_\st)}}\leq c_X(a).$$
The function $c_X(a)$ is non-decreasing and continuous. The symplectic non-squeezing phenomenon \cite{Gromov85} states the existence of values $a\in\R^{>0}$ for which the above inequality is actually strict. The ground-breaking work of D. McDuff and F. Schlenk \cite{McDuffSchlenk1} establishes for $(X,\omega_\st)=(E(1,1),\omega_\st)$ the existence of a non-empty interval $I_X=[\alpha_X,\Omega_X]\sse\R^{>0}$ and a convergent sequence $S=\{s_n\}_{n\in\N}\sse I_X$ such that $c_X(s_n)$ coincides with the volume lower bound and $c_X|_{I_X\setminus S}$ is strictly larger than the volume bound. The exact graph for the function $c_X$ in this case is depicted in \cite[Figure 1.1]{McDuffSchlenk1}.

\begin{definition}\label{def:infinitestairs} A 4-dimensional symplectic domain $(X,\omega_\st)$ is said to admit a {\it sharp infinite staircase} if there exists a non-zero interval $I_X=[\alpha_X,\Omega_X]\sse\R^{>0}$ and an infinite sequence $S=\{s_n\}_{n\in\N}\sse I_X$ of distinct points converging to $\Omega_X$, such that $c_X(s_n)$ coincides with the volume lower bound and $c_X|_{(I_X\setminus S)}$ is strictly greater than the volume bound. The {\it sharp points} of a sharp infinite staircase $(I_X,S)$ are the points $S=\{s_n\}_{n\in\N}$ where the volume bound for $c_X|_{I_X}$ is sharp.\hfill$\Box$
\end{definition}

The existence of a sharp infinite staircase for a 4-dimensional symplectic domain $(X,\omega_\st)$ has been a central question in the study of low-dimensional quantitative symplectic geometry, as beautifully developed by D. McDuff, R. Hind, M. Hutchings, F. Schlenk and many others \cite{DCG1,DCG2,McDuff1,McDuffSchlenk1,Schlenk1}. In the first crucial discovery \cite{McDuffSchlenk1}, it is shown that $(X,\omega_\st)=(\bD^4(1),\omega_\st)$, the standard unit ball, admits an infinite staircase. Two additional results were obtained for the polydisk $(X,\omega_\st)=(\bD^2(1)\times \bD^2(1),\omega_\st\oplus\omega_\st)$ by Frenkel-M\"uller \cite{PellStaircase}, and for the ellipsoid $(X,\omega_\st)=(E(2,3),\omega_\st)$ by Cristofaro-Gardiner-Kleinman \cite{DCG_Staircase}.

\begin{remark} Definition \ref{def:infinitestairs} is the notion we use in this manuscript, we refer to M. Usher's \cite[Section 1.3]{Usher_Staircases} for comparison. The article \cite{DCGetAL} also considers {\it non-sharp} staircases where the volume lower-bound is not sharp. This corresponds to their $J=3$ case, which would not abide by Definition \ref{def:infinitestairs}.\hfill$\Box$
\end{remark}

Let us consider the almost toric base diagrams in Figure \ref{fig:IntroTable}. Each such base diagram $B$ yields a closed monotone symplectic 4-manifold $(X_B,\omega_B)$, as explained in \cite[Section 3]{SymingtonLeung}, cf.~ also \cite[Section 2.3]{Vianna1} and \cite[Section 3]{Vianna2}. The almost toric diagrams in the first row yield $\C\P^2$ (left) and $\C\P^1\times\C\P^1$ (right), the four diagrams in the second row correspond to $Bl_3(\C\P^2)$ and the last two diagrams in the fourth row yield $Bl_4(\C\P^2)$. In this manuscript, we focus on 4-dimensional symplectic domains which are obtained by removing (neighborhoods of) symplectic divisors and Lagrangian 2-spheres from closed symplectic 4-manifolds. For each base diagram $B$ in Figure \ref{fig:IntroTable}, we choose the symplectic divisors associated to the {\it blue} sides of the diagram and the Lagrangian 2-spheres lying over {\it red} cuts, both according to the constructions in \cite{SymingtonLeung}, and see also \cite{Vianna1}. By definition, the symplectic domain associated to an almost toric base in Figure \ref{fig:IntroTable} endowed with the data of the blue sides and red cuts is the symplectic complement of the  corresponding  symplectic divisors and Lagrangian 2-spheres inside the closed symplectic 4-manifold $(X_B,\omega_B)$. See Section \ref{sec:Preliminaries} for definitions and further details.

Let $\SH$ be the set of the nine symplectic domains listed in Figure \ref{fig:IntroTable}, as described in the above paragraph. These domains $(X,\omega_X)\in\SH$ are obtained from one of the monotone closed symplectic 4-manifolds $\C\P^2,\C\P^1\times\C\P^1,Bl_3(\C\P^2)$ and $Bl_4(\C\P^2)$ by removing a configuration of surfaces. These surfaces depend on each base in Figure \ref{fig:IntroTable}: each of these configurations consists of a union of symplectic 2-spheres, lying above the {\it blue sides} of the almost toric base, and Lagrangian 2-spheres, located above the {\it red cuts}. See Section \ref{sec:proofmain} for a more detailed description of these symplectic domains.

\begin{example} The closed symplectic 4-manifold associated to the base in the leftmost diagram in the first row is $(\CP^2,\omega_{\st})$, and the symplectic surface associated to the blue side is a symplectic line $\CP^1\sse(\CP^2,\omega_{\st})$. Thus, the associated symplectic domain is the unit Darboux 4-ball $(\bD^4(1),\omega_{\st})$, presented as the complement $\CP^2\setminus\CP^1$. In contrast, the closed symplectic 4-manifold associated to the base in the rightmost diagram in the first row is $(\CP^1\times\CP^1,\omega_{\st}\oplus\omega_{\st})$. The symplectic surface associated to the blue hypotenuse is the symplectic diagonal $\CP^1\sse(\CP^1\times\CP^1,\omega_{\st}\oplus\omega_{\st})$ and the Lagrangian 2-sphere associated to the red cut is the Lagrangian anti-diagonal. The associated symplectic domain is the $E(1,2)$ ellipsoid. In general, the symplectic domains $\SH$ do not have a particular name, except for the cases of $\bD^4$ and $E(1,2)$ just discussed, and the polydisk $ \bD^2(1)\times\bD^2(1)$ and the ellipsoid $E(2,3)$, which correspond to the center base diagram in the first row and the rightmost diagram in the second row, respectively.\hfill$\Box$
\end{example}

%\begin{remark}
%These Lagrangian 2-spheres can be Hamiltonian isotoped so as their projection is a vertex of the base, where the cut starts; this is explained in . Thus, the symplectic domains $(X,\omega_X)\in\SH$ can also be directly interpreted as the convex domain defined by the polytopes in Figure \ref{fig:IntroTable}, without taking the red cuts into account.
%\end{remark}

First, let us state our main result on the construction of ellipsoid embeddings:
\begin{thm}\label{thm:main}
	Let $(X,\omega_X)\in\SH$ be a 4-dimensional symplectic domain. Then the non-decreasing function $c_{X}:\R\lr\R$ admits a sequence of sharp points $S\sse I_X$.\hfill$\Box$
\end{thm}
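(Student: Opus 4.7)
The plan is to establish Theorem \ref{thm:main} by a uniform construction applied case-by-case to each of the nine domains $(X,\omega_X)\in\SH$, with the common engine being iterated almost-toric polytope mutations. Concretely, I would first fix the initial almost-toric base polytope $P_0$ depicted in Figure \ref{fig:IntroTable} for the given $X$. This polytope already records a finite collection of nodal cuts (the red segments), obtained from the standard toric picture by nodal trades at the vertices indicated. Starting from $P_0$, I would construct a sequence of polytope mutations $P_0 \leadsto P_1 \leadsto P_2 \leadsto \cdots$ by iterating nodal slides along a fixed recursive pattern. Each $P_n$ is an almost-toric base diagram for the same symplectic domain $(X,\omega_X)$, and the mutation process produces a Markov-type recursion on the relevant edge lengths. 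The mutation pattern itself is dictated by a periodic quiver associated to $X$, anticipating the quiver-theoretic connection developed later in the paper.

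Next, for each $n$ I would read off a symplectic ellipsoid embedding from the geometry of $P_n$. After these mutations, a distinguished corner of $P_n$ presents, in appropriate $\SSL(2,\bZ)$ coordinates, a right-angled triangular chamber with legs $(\sigma_n,\sigma_n a_n)$, where the pair $(a_n,\sigma_n)$ is determined by the mutation recursion. By the almost-toric inscribing principle of Symington used throughout the ellipsoid-embedding literature, such a chamber yields a symplectic embedding $\sigma_n\cdot E(1,a_n)\hookrightarrow (X,\omega_X)$, or equivalently $E(1,a_n)\hookrightarrow (X,\sigma_n^{-1}\omega_X)$ after rescaling.

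Third, I would verify the sharpness identity $\sigma_n^2 = a_n/(2\Vol(X,\omega_X))$ by a direct area computation. Since polytope mutations preserve the symplectic area of the base, $\Vol(X,\omega_X)$ equals the Euclidean area of $P_n$ for every $n$; comparing this to the triangle area $\tfrac12\sigma_n^2 a_n$ closes the identity. Confirming that $\{a_n\}$ is infinite, consists of distinct values, and converges to a finite limit $\Omega_X$ then reduces to a standard analysis of the recurrence, and yields the sharp sequence $S=\{a_n\}_{n\in\N}\sse I_X=[\alpha_X,\Omega_X]$.

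The main obstacle I anticipate lies in the second stage: arranging the mutation sequence so that the relevant corner of $P_n$ genuinely inscribes a triangle of the prescribed shape, and confirming that the associated symplectic embedding extends cleanly across the nodal cuts. This requires a careful choice of initial mutation direction for each of the nine domains, together with an explicit verification, using visible Lagrangian surgeries, that the embedding avoids the singular fibers introduced by the cuts. The case-by-case work, organized around the almost-toric pictures in Figure \ref{fig:IntroTable}, will likely occupy the bulk of the argument.
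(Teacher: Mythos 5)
Your overall strategy --- iterated almost-toric mutations producing triangular polytopes, reading off volume-filling ellipsoid embeddings from the mutated corner, Markov-type recursions supplying the numerics, and the area argument for sharpness --- faithfully matches the paper's constructive framework (the Symington sequences of Definition \ref{def:SymingtonSeq} and Propositions \ref{prop:ExistenceSymingtonSequence}, \ref{prop:EllipsoidInTriangle}, \ref{prop:EllipsoidInSmoothing}). However, there is a genuine gap: you never confront the fact that the domains $(X,\omega_X)\in\SH$ are \emph{open}, defined as complements $X(\SP,\SS)=X(\SP)\setminus D(\SS)$ of a configuration $D(\SS)$ of symplectic $2$-spheres and Lagrangian $2$-spheres inside a closed monotone $4$-manifold $X(\SP)$. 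The mutation argument naturally produces an absolute embedding $i_n:E(a_n,b_n)\lr X(\SP)$ into the closed manifold, and most of the actual work in the paper's proof is in promoting $i_n$ to a relative embedding $\iota_n:E(a_n,b_n)\lr X(\SP,\SS)$, i.e.\ into the complement of $D(\SS)$.

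You localize the anticipated difficulty as "confirming that the associated symplectic embedding extends cleanly across the nodal cuts" and "avoids the singular fibers introduced by the cuts." This misidentifies the obstacle: avoiding the nodal cuts is easy, since Propositions \ref{prop:EllipsoidInTriangle} and \ref{prop:EllipsoidInSmoothing} show one can take the cuts arbitrarily short near the edge opposite the frozen vertex $v_f$ and still fill the volume. What is genuinely hard is showing that the compactifying configuration $D(\SS)$ --- which includes the symplectic spheres over the blue sides, not just the Lagrangian spheres over the cuts --- can be symplectically/Hamiltonian isotoped to lie above an $\varepsilon$-neighborhood of the edge opposite $v_f$ in the $n$-th mutated diagram, for every $n$. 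Without that control, nothing guarantees the image of the ellipsoid misses $D(\SS)$.

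The paper addresses this with two ingredients your plan omits. First, Theorem \ref{thm:ExistenceTropicalSymplectic} constructs, inside each mutated base $\SP_n$, a symplectic-tropical curve $\widetilde{S}_n$ near the opposite edge whose lift $D(\widetilde{S}_n)\sse X(\SP_n)$ is an embedded symplectic configuration in the correct homology class; this is the output of the symplectic tropical curve technology developed in Section \ref{sec:Symp_Trop}, which there is no way around. Second, one must show $D(\widetilde{S}_n)$ is equisingularly symplectically isotopic to the standard $D(\SS)$, which uses the uniqueness of symplectic isotopy classes of low-degree symplectic curves (Gromov for $\C\P^1\sse\C\P^2$, McDuff's classification of rational and ruled symplectic $4$-manifolds for the blow-ups) and the known uniqueness of Lagrangian $2$-spheres up to Hamiltonian isotopy in $\C\P^1\times\C\P^1$, $Bl_3(\C\P^2)$, and $Bl_4(\C\P^2)$. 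Only after this isotopy step does the complement of $D(\widetilde{S}_n)$ identify with $X(\SP,\SS)$ and give the sharp relative embedding $\iota_n$. Your "visible Lagrangian surgeries" remark gestures at Symington's visible surfaces but does not supply either of these two steps, and without them the proof does not close.
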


Theorem \ref{thm:main} establishes the existence of {\it sharp} points, which is the constructive ingredient towards infinite staircases. Indeed, the existence of sharp infinite staircases consists of two independent arguments. First, a {\it constructive} result showing the existence of a sequence of {\it sharp} points $S\sse I_X$. Second, an {\it obstructive} result stating that the volume bound is {\it not} an equality for the non-sharp points $I_X\setminus S$. Our Theorem \ref{thm:main} contributes to the constructive part, which is the part that the present manuscript geometrically establishes.

{\bf Important point:} It is crucial to emphasize that, in Theorem \ref{thm:main}, we are able to embed an infinite staircase of ellipsoids into the symplectic domains associated to Figure \ref{fig:IntroTable}, and not just into the closed (almost toric) symplectic 4-manifolds: we must show that the ellipsoid embeddings can be made to avoid the required configurations of symplectic 2-spheres and Lagrangian 2-spheres, and this is achieved explicitly by using symplectic-tropical curves, see Sections \ref{sec:proofmain} and \ref{sec:Symp_Trop}.

\begin{remark} \label{rmk:Volume_filling}
  
 Consider a 4-dimensional symplectic domain $(X,\omega_X)\in\SH$, and a correponding ellipsoid $E$ associated
 with a sharp point in $S\sse I_X$, from Theorem \ref{thm:main}. Let $E_k \sse E$ be the rescaling of $E$
 such that the volume of $E \setminus E_k$ is $1/k$, $k\in\N$. In this manuscript, we construct a family of almost toric fibrations
 $\m^k:(\overline{X},\omega_{\overline{X}}) \lr \R^2$ (see Sections
 \ref{subsec:ATFs}--\ref{ssec:PolytopeMutation}) on the compactification $\overline{X}$ of $X$, which will be either $\C\P^2,
 \C\P^1\times \C\P^1, Bl_3(\C\P^2)$ or $Bl_4(\C\P^2)$. This provides a sequence of embeddings
 $\Psi_k:E_k \hookrightarrow \overline{X}$. Since the nodal slide operation in almost toric
 fibrations (see \cite{SymingtonLeung,Symington}) can be taken to only modify the fibration in a neighbourhood of the cut where the slide happens, we can obtain a family of symplectomorphisms $\Phi_k: \overline{X} \lr
 \overline{X}$ such that $\Phi_l$ leaves $\Psi_k(E_k)$ invariant and $\Psi_k(E_k) \sse
 \Psi_l(E_l)$, if $k \le l$. This construction will yield an embedding $E \hookrightarrow \overline{X}$ as follows. From the almost toric fibrations
 $\m^k:(\overline{X},\omega_{\overline{X}}) \lr \R^2$, we get a sequence of embeddings $\varphi_k: X
 \hookrightarrow \oX$, by identifying $X$ with the complement of a configuration $\oX \setminus \varphi_k(X)$
 of symplectic-tropical curves and, in some cases, Lagrangian spheres. Taking the symplectomorphisms $\Phi_k: \overline{X} \lr
 \overline{X}$ into consideration, we get a family of symplectomorphisms $\phi_k: X \lr X$ such that $\Phi_k\circ\varphi_k=\varphi_{k+1}\circ\phi_k$. 
 %\begin{center}
%    \begin{tikzcd}
%\subnode{u1}{$X$}  \arrow[hook]{r}{\varphi_k}  & \ \ \ \ \ \ \ \ \subnode{u3}{$\oX$} \\
%\subnode{u2}{$X$} \arrow[hook]{r}{\varphi_{k+1}} & \ \ \ \  \ \ \ \ \subnode{u4}{$\oX$}
%\end{tikzcd}
%\begin{tikzpicture}[overlay, remember picture]
%  \draw [->] (u1) edge ["$\phi_k$", left] (u2.north -| u1);
%   \draw [->] (u3) edge ["$\Phi_k$", right] (u4.north -| u3);
%\end{tikzpicture}
%\end{center}
In the cases being discussed, the isotopy taking $\Phi_k (\oX \setminus \varphi_k(X))$ to $\oX \setminus \varphi_{k+1}(X)$ 
 can be taken in the complement of $\Phi_k(\Psi_k(E_k)) = \Psi_k(E_k)$. Hence, we may assume that $\phi_k$ is 
 identity in the complement of $\Psi_k(E_k)$, which gives the inclusion $\varphi_k^{-1}(\Psi_k(E_k))=\varphi_l^{-1}(\Psi_k(E_k)) \sse
 \varphi_l^{-1}(\Psi_l(E_l)) \sse X$, for all $k\leq l$, defining an embedding $E \hookrightarrow 
 X$.\hfill$\Box$

\end{remark}

\color{black}

\begin{remark} Even if we find the obstructive part equally interesting, our geometric argument is only constructive, not obstructive. The manuscript \cite{DCGetAL} uses the symplectic capacities from Embedded Contact Homology (ECH), as developed by M. Hutchings \cite{Hutchings1,Hutchings2}, to provide the desired obstructions. The combination of our Theorem \ref{thm:main}, being constructive, and the ECH obstructions in \cite{DCGetAL} imply that the sequence of sharp points $S\sse I_X$ are indeed part of an infinite staircase. We refer to \cite{DCGetAL} for a detailed discussion and computation of these ECH capacities.\hfill$\Box$
\end{remark}

\begin{example}\label{ex:FirstRow}
The three domains $(X,\omega_X)\in\SH$ in the first row of Figure \ref{fig:IntroTable} are the unit ball $\bD^4(1)$, presented as the complement of a symplectic sphere $\C\P^1\sse(\C\P^2,\omega_\st)$, the polydisk $\bD^2(1)\times\bD^2(1)$, arising as the complement of the two symplectic spheres $\C\P^1\times\{pt\},\{pt\}\times\C\P^1\sse(\C\P^1\times\C\P^1,\omega_\st\oplus\omega_\st)$, and $E(1,2)$, presented as the complement of a symplectic sphere $\C\P^1\sse(\C\P^1\times\C\P^1,\omega_\st\oplus\omega_\st)$ and a Lagrangian 2-sphere $S^2\sse(\C\P^1\times\C\P^1,\omega_\st\oplus\omega_\st)$ in the homology class of the anti-diagonal. Theorem \ref{thm:main} for these three domains recovers the Fibonacci stairs \cite{McDuffSchlenk1} and the Frenkel-M\"uller Pell stairs \cite{PellStaircase}. The Cristofaro-Gardiner-Kleinman staircase \cite{DCGetAL} for $E(2,3)$ correspond to the rightmost almost toric base in the second row.\hfill$\Box$
\end{example}

The symplectic domains $(X,\omega_X)\in\SH$ not discussed in Example
\ref{ex:FirstRow} do not have particular names, with the exception of the
rightmost domain in the second row. For instance, the leftmost domain
$(X,\omega_X)\in\SH$ in the second row is the complement in $Bl_3(\C\P^2)$,
$\C\P^2$ blown-up at three generic points $p_1,p_2,p_3\in\C\P^2$, of a
configuration of four symplectic 2-spheres: two of the exceptional divisors and
the proper transforms of the projective line through $p_1,p_2$, and the
projective line through $p_2,p_3$. These complements do not typically have a given name, except for $E(2,3)$, which appears as the
complement of an exceptional divisor and {\it three} Lagrangian 2-spheres in
$Bl_3(\C\P^2)$.

\begin{center} \begin{figure}[h!] \centering
\includegraphics[scale=0.75]{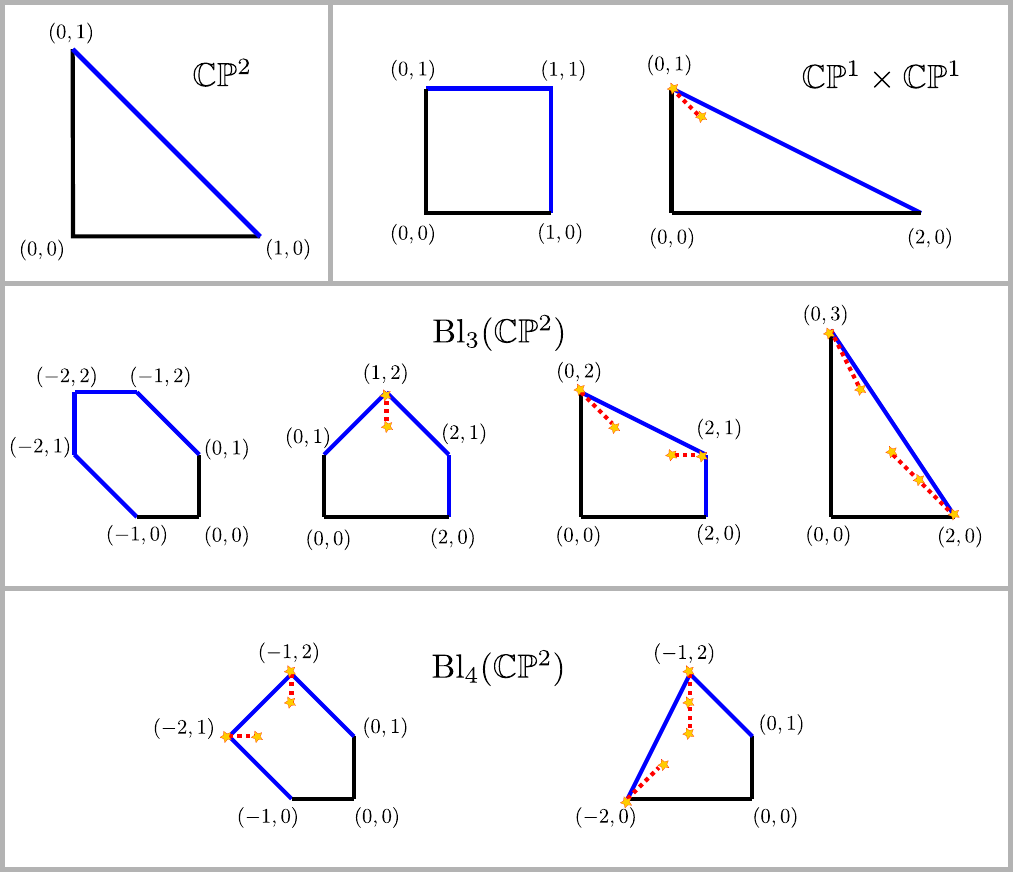}
\caption{The almost toric base
diagrams for the symplectic domains $(X,\omega_X)$ in Theorem \ref{thm:main}.
The domains $(X,\omega_X)$ are the complements of symplectic 2-spheres, in blue,
and Lagrangian 2-spheres, in red, in the closed almost toric symplectic manifold
corresponding to each diagram. Note that the same closed symplectic manifold
might yield different symplectic domains $(X,\omega_X)$, as the choices of
symplectic and Lagrangian 2-spheres depend on the almost toric base diagram. In each case, the polytope associated to the almost-base is given by the convex hull of the indicated vertices in the standard lattice $\Z\langle(1,0),(0,1)\rangle\cong\Z^2\sse\R^2$.}
\label{fig:IntroTable}
\end{figure} \end{center}

Second, our method for showing the existence of sharp
points requires applying symplectic techniques coming from {\it tropical} combinatorics in
almost toric diagrams \cite{SymingtonLeung,Symington}. In particular, our method -- including the proof for Theorem \ref{thm:main} -- requires the introduction and study of {\it symplectic-tropical curves}
in almost toric fibrations, which represent (configurations of) smooth symplectic curves in
4-dimensional almost toric symplectic manifolds. In a nutshell, our construction
in Section \ref{sec:Symp_Trop} yields the following result:

\begin{thm}\label{thm:main2}
Let $\pi:(X,\omega_{X})\lr B$ be an almost toric fibration and $\SC\sse B$ a symplectic-tropical curve.\footnote{See Definition \ref{dfn:Symp_comp_graph} for a precise description of symplectic-tropical curves. They are a generalization of the tropical diagrams in \cite{Mikhalkin1,Mikhalkin2} for symplectic surfaces and Lagrangian fibrations with singular nodal fibers.} Then there exists a symplectic curve $C\sse X$ with $\pi(C)=\SC$.
\end{thm}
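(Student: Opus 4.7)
The plan is to build $C$ piece-by-piece over a decomposition of $\SC$ into elementary combinatorial pieces and then glue the local symplectic surfaces into a single symplectic curve in $X$. Split $\SC$ into: (i) open edges contained in the regular locus of the Lagrangian fibration $\pi$, (ii) interior vertices at which several edges meet with matching balancing condition, (iii) endpoints on $\dd B$ meeting a boundary edge of the polytope, at which $\SC$ should cap off onto a symplectic $2$-sphere in $\pi^{-1}(\dd B)$, and (iv) points where $\SC$ crosses the base-image of a nodal fiber.

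Over each regular edge with primitive integer direction $v=(p,q)\in\bZ^2$, I would use Arnold--Liouville action-angle coordinates to identify a neighborhood of the edge with a standard $T^2$-bundle; the local piece is then the symplectic cylinder $\{q\theta_1-p\theta_2=\const\}$ over the edge, which is symplectic because the restriction of $\omega_X=dI_1\wedge d\theta_1+dI_2\wedge d\theta_2$ to this slice is non-degenerate. At an ordinary trivalent vertex, apply the $\OP{SL}(2,\bZ)$ transformation that standardises the vertex to a toric corner in $\bC^2$; inside that corner Mikhalkin's pair-of-pants \cite{Mikhalkin2} gives an explicit symplectic disk whose amoeba is the vertex with its three outgoing rays. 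At a boundary endpoint, the cylinder is extended across the corresponding symplectic $2$-sphere lying over the blue side, closing the puncture.

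The delicate case is a crossing of a base-image of a nodal fiber. By the definition of a symplectic-tropical curve, $\SC$ traverses such a point along the monodromy-invariant direction, which, by \cite{Symington}, is precisely the direction along which the nodal trade is performed. I would construct the piece in this region by first reverting the nodal trade in a local chart, producing a toric neighborhood with a new primitive ray along the branch cut; in this toric model a straight Mikhalkin piece in that direction is a smooth symplectic disk, and re-performing the nodal trade compatibly deforms the curve into a smooth symplectic disk passing through the visible Lagrangian $2$-sphere sitting above the node. This compatibility is the new ingredient not present in the purely toric picture of \cite{Mikhalkin1,Mikhalkin2}.

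Finally, the local pieces are glued together. The main obstacle is ensuring that the resulting submanifold is simultaneously smooth and symplectic. I would address this by arranging every local piece to be cylindrical in a collar neighborhood of each edge, so that in action-angle coordinates adjacent pieces agree up to a Hamiltonian isotopy in the $T^2$-fibers; aligning them produces a smooth immersed surface. A Moser-type argument then gives the symplectic conclusion: the symplectic condition is open, and the gluing modification is supported in a thin collar, so a small perturbation yields the desired $C\sse X$ with $\pi(C)=\SC$.
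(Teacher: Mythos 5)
Your high-level architecture — decompose $\SC$ into regular-edge pieces, interior-vertex pieces, boundary endpoints and pieces near nodes, build local symplectic surfaces, then glue — is the same as the paper's, and your use of Arnold--Liouville cylinders over regular edges and Mikhalkin-style pair-of-pants models at interior vertices is acknowledged in the paper as one viable route. However, the proposal misses the single largest technical obstruction, namely \emph{multiplicities}. In Definition~\ref{dfn:Symp_comp_graph} each edge carries a multiplicity $m\in\Z_{>0}$, and the staircases in Theorem~\ref{thm:main} are built from symplectic-tropical curves whose leaves carry arbitrarily large weights (e.g.\ Figure~\ref{fig:SympTropMutationP2} has weights $(F_n,F_{n+1},3)$). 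Your edge piece is a single cylinder, your trivalent vertex is a single pair of pants, and your nodal piece is a single disk; none of these generalizes directly to multiplicity $m>1$. The paper deals with this by (a) producing $k$ pairwise disjoint symplectic capping $2$-disks $\sigma_1,\dots,\sigma_k$ near a node using explicit perturbed complex lines $l_j=\{y=-ix+\Psi_j(|x|^2)\}$ (Proposition~\ref{prp:Local_node}), and (b) replacing the pair-of-pants by a dimer-model surface in $T^2$ whose zigzag paths realize $m_1\bbw_1+m_2\bbw_2+m_3\bbw_3=0$ (Propositions~\ref{prp:Dimer3} and~\ref{prp:local interior}); Subsection~\ref{subsec:Conn_local_models} then glues the curling boundaries $\del\sigma_j$ to the straight dimer cycles via further dimers. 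None of this machinery is visible in your sketch, and without it the construction cannot even begin for the actual curves used in Theorem~\ref{thm:main}.

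A second, related point: the literal conclusion $\pi(C)=\SC$ is too strong to achieve once multiplicity $k>1$ appears at a node. The paper explicitly observes that the $k$ disjoint disks cannot all project onto a single segment; they must spread over a two-dimensional thickening of it. The precise result (Theorem~\ref{prp:symp_trop}) therefore asserts only that $\pi(C)$ lies inside a prescribed neighborhood $\cN$ of $\SC(\Gamma)$. Your final Moser step also glosses over the real difficulty in gluing: the issue is not that adjacent pieces agree only up to a Hamiltonian perturbation in the fibers, but that the cycles near a node are mutually linked non-trivially in a thickened torus (Figure~\ref{fig:Loc_disks}) and must be patched to the equidistant straight cycles produced at an interior vertex, which is the whole content of Subsection~\ref{subsec:Conn_local_models} and cannot be absorbed by a small symplectic perturbation. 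Finally, your description of the nodal crossing as ``revert nodal trade, insert straight Mikhalkin piece, re-do nodal trade'' does not match Definition~\ref{dfn:Symp_comp_graph}(vii): the tropical curve does not cross through the node, it terminates there with a leaf whose decorating $\Z^2$-vector is orthogonal to the cut, and the corresponding collapsing $2$-disk is constructed directly in the normal form $\pi(x,y)=\bar{x}y$. The nodal-trade-reversal trick would produce a single disk; it would not produce the $k$ disjoint, suitably linked disks needed for a weight-$k$ leaf.
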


The statement of Theorem \ref{thm:main2}, as well as the methods we introduce for its proof, are hopefully of interest on their own terms, as they extend the algebraic geometric tropical methods \cite{Mikhalkin1,Mikhalkin2} to the almost toric symplectic context. Symplectic tropical curves will be defined in Section \ref{sec:Symp_Trop}, where we shall develop the diagrammatics, arithmetic and symplectic geometry associated to sympletic-tropical diagrams in almost toric polytopes.

\begin{remark}
The tropical diagrams $\SC\sse B$ in Theorem \ref{thm:main2} also allow us to readily compute homological (and geometric) intersection numbers. The referee kindly informed us that homological intersection numbers can be defined in the general setting of tropical
cycles \cite{BIMS14} and our symplectic-tropical diagrams can be treated, away from the nodes, as a special kind of (1,1)-tropical cycles.\hfill$\Box$
\end{remark} 

The construction of the infinite staircases in Theorem \ref{thm:main} relies on our study of the symplectic isotopy classes of the symplectic curves $C_X\sse(\ol X,\omega_{\ol X})$ associated to specific tropical diagrams $\SC$. Theorem \ref{thm:main2} further develops the work of M. Symington \cite{Symington} in line with G. Mikhalkin's study of complex tropical geometry \cite{Mikhalkin1}. Section \ref{sec:Symp_Trop} contains a detailed study of the required local models, in Subsections \ref{ssec:Symp_Trop}-\ref{subsec:RequiredSTC}, as well as the construction of symplectic chains of embedded curves associated to symplectic-tropical diagrams, in Subsections \ref{subsec:Chains_STC_Prelim} through \ref{subsec:Chains_STC}.

Finally, the combinatorics and numerics appearing in the infinite staircases from Theorem \ref{thm:main} strongly intertwine with the recent developments in the study of cluster algebras \cite{Cluster2,Cluster1} and quiver mutations \cite{PolyMut2,PolyMut3}.
%This connection, and thus a direct relation with homological mirror symmetry, is succinctly illustrated in Section \ref{sec:quiver}.\\

% ------------------------------------------------------------------------------------------------------
{\bf Organization.} The article is organized as follows. Section \ref{sec:Preliminaries} introduces basic ingredients of almost toric symplectic geometry. Section \ref{sec:proofmain} proves Theorem \ref{thm:main} by constructing the required infinite staircases with the results from Section \ref{sec:Symp_Trop}. Section \ref{sec:Symp_Trop} proves Theorem \ref{thm:main2}.
%, and Section \ref{sec:quiver} describes the combinatorics used for our infinite staircases in terms of quiver mutations.
\hfill$\Box$\\
% ------------------------------------------------------------------------------------------------------

% ------------------------------------------------------------------------------------------------------
{\bf Acknowledgements.} We are grateful to Dan Cristofaro-Gardiner, Richard Hind, Liana Heuberger, Jeff Hicks, Tara Holm, Alessia
Mandini, Dusa McDuff, Navid Nabijou, Ana Rita Pires, Laura Starkston and Weiwei Wu for valuable discussions.

R.~Casals is supported by
the NSF grant DMS-1841913, the NSF CAREER Award DMS-1942363, an Alfred P. Sloan Fellowship and a BBVA Research Fellowship. R.~Vianna's participation at the Matrix program on
the Mirror Symmetry and Tropical Geometry at Creswick, Australia was significant
for the development of technical aspects of symplectic-tropical curves in
Section \ref{sec:Symp_Trop}. R.~Vianna is supported by Brazil's National Council
of scientific and technological development CNPq, via the research fellowships
405379/2018-8 and 306439/2018-2, and by the Serrapilheira Institute grant
Serra-R-1811-25965.\hfill$\Box$\\ \\
% ------------------------------------------------------------------------------------------------------
% ------------------------------------------------------------------------------------------------------
{\bf Relation to \cite{DCGetAL}.} This article has been posted in parallel with the
manuscript \cite{DCGetAL}. We are grateful to each of its authors, Dan
Cristofaro-Gardiner, Tara Holm, Alessia Mandini and Ana Rita Pires, for the
fluid and helpful communication with us. It is our understanding that both
groups of authors came to the study of this problem from different perspectives,
with the idea of using polytope mutations originating with the first author of
the present manuscript. Both collaborations have benefited from our exchanges of ideas.

We encourage the reader to study the manuscript \cite{DCGetAL}, which we find to be a very valuable contribution to the theory of symplectic ellipsoid embeddings as well. The results in our article, especially Theorem \ref{thm:main} are strengthened by their contributions to the {\it obstructive} side of the theory, as their manuscript \cite{DCGetAL} shows that ECH obstructions make the volume bound not optimal away from the required sequences. This clearly highlights the importance of the full ellipsoid embeddings we construct, and we gladly acknowledge the relevance and non-triviality of these ECH computations. Their manuscript \cite{DCGetAL} also proves Theorem \ref{thm:main}, equally based on polytope mutations but concluding in a more succinct abstract manner, and formulates a compelling conjecture regarding infinite staircases of rational convex toric domains. Each of the articles addresses the arithmetic of staircases from a different perspective: in this manuscript we directly use the implicit Diophantine equations, and the manuscript \cite{DCGetAL} proceeds parametrically, in terms of recursions. Even though both manuscripts
could potentially be joined, we find that having both articles available is
also enriching for the literature, as they discuss different techniques and
perspectives.\hfill$\Box$\\

%Our
%manuscript has the advantage of directly constructing explicit embeddings,
%developing tropical techniques to symplectic almost toric geometry and
%discussing the connection to cluster algebra combinatorics. 

%%%%%%%%%%%%%%%%%%%%%%%%%%%%%%%%%%%%%%%%%%%%%%%%%%%%%%%%%%%%%%%%%%%%%%
%%%%%%%%%%%%%%%%%%%%%%%%%%%%%%%%%%%%%%%%%%%%%%%%%%%%%%%%%%%%%%%%%%%%%%
%%%%%%%%%%%%%%%%%%%%%%%%%%%%%%%%%%%%%%%%%%%%%%%%%%%%%%%%%%%%%%%%%%%%%%

\section{Preliminaries}\label{sec:Preliminaries}

%%%%%%%%%%%%%%%%%%%%%%%%%%%%%%%%%%%%%%%%%%%%%%%%%%%%%%%%%%%%%%%%%%%%%%
%%%%%%%%%%%%%%%%%%%%%%%%%%%%%%%%%%%%%%%%%%%%%%%%%%%%%%%%%%%%%%%%%%%%%%

In this section we develop notations for the base diagrams of almost toric fibrations, also known as ATFs \cite{Symington}, which we use in our description of symplectic ellipsoid embbedings. The present section is focused on understanding combinatorial mutations of polytopes that describe these almost toric fibrations -- particularly from the viewpoint of smoothing, and degenerating, toric orbifolds. The more technical aspects of the symplectic topology shall be presented in Subsection \ref{subsec:ATFs}.

%%%%%%%%%%%%%%%%%%%%%%%%%%%%%%%%%%%%%%%%%%%%%%%%%%%%%%%%%%%%%%%%%%%%%%
%%%%%%%%%%%%%%%%%%%%%%%%%%%%%%%%%%%%%%%%%%%%%%%%%%%%%%%%%%%%%%%%%%%%%%

\subsection{Almost-toric Fibrations}\label{subsec:ATFs}

We start by succinctly discussing \emph{almost toric fibrations},
henceforth abbreviated ATF \cite{Symington}. Almost-toric fibrations are generalizations of the toric ones, where we allow so-called nodal singularities, besides the usual toric 
singularities of a moment map. We refer the reader to \cite{SymingtonLeung,Symington} for additional details. The necessary definition, \cite[Definition~4.5]{Symington} and \cite[Definition~2.2]{SymingtonLeung}, reads as follows:

\begin{definition}[\cite{SymingtonLeung,Symington}] \label{dfn:ATF} 
 An \emph{almost toric fibration} of a symplectic 4-manifold $(X,\omega)$ is 
 a Lagrangian fibration $\pi: (X, \omega) \rightarrow B$ such that any point of 
 $(X, \omega)$ there exists a Darboux neighborhood $(x_1,y_1,x_2,y_2)\in(\bD^4,\omega_\st)$, with symplectic form $\omega_\st=dx_1\wedge dy_1 +
 dx_2\wedge dy_2$, in which the map $\pi$ has one of the following local normal forms:
\begin{eqnarray*}
 \pi(x,y) & = & (x_1, x_2),
\hspace{5,7cm} \text{regular point}, \\
 \pi(x,y) & = & (x_1, x_2^2 + y_2^2),
\hspace{4,85cm} \text{elliptic, corank one}, \\ 
 \pi(x,y) & = & (x_1^2 + y_1^2, x_2^2 + y_2^2),
\hspace{4cm} \text{ elliptic, corank two}, \\
 \pi(x,y) & = & (x_1y_1 + x_2y_2, x_1y_2 - x_2y_1),
\hspace{2,7cm} \text{   nodal or focus-focus},  
\end{eqnarray*}
with respect to some choice of coordinates near the image point in the almost toric base $B$.\hfill$\Box$
\end{definition}

For the nodal singularity, writing $x = x_1 + i x_2$ and $y = y_1 + i y_2$, 
the almost toric fibration $\pi$ reads $\pi(x,y)=\overline{x} y \in \C$. The following remark will
be relevant to Section~\ref{sec:Symp_Trop}.

\begin{remark} \label{rmk:Symp_Curves} By the Arnold-Liouville Theorem \cite{Ar_book}, given any point $q$ in a 
regular fibre, there is a neighborhood of the form $U \times T^2$, with action-angle coordinates $(p_1,p_2, \theta_1, \theta_2)$, 
$(p_1,p_2) \in \R^2$ and $(\theta_1, \theta_2) \in T^2 = \R^2/\Z^2$, 
where the symplectic for $\omega$ is $dp_1\wedge d\theta_1 + dp_2\wedge d\theta_2$. Here, 
we identify simultaneously $U$ with a neighborhood of $\pi(q)$ in $B$, as
well as, a neighborhood in $\R^2$. Consider then a 1-cycle $\gamma(t) = (\theta_1(t), \theta_2(t))
\in \pi^{-1}(q) = \R^2/\Z^2$ 
and move it along a curve $\sigma(s)$ in 
$U \subset \R^2$ so as to get the cylinder $\Gamma(s,t) = (\sigma(s), \gamma(t))$ 
in the $(p_1,p_2, \theta_1, \theta_2)$ coordinates. Up to a choice of orientation, $\Gamma(s,t)$ is symplectic iff 
$\langle {\sigma'}(s)| \gamma'(t) \rangle \ne 
0$, for all $s$ and $t$. In particular, if we fix the cycle with $\gamma'(t) =(a,b)$, $(a,b) \in \Z^2$, we are allowed to move in any 
direction in $U$, as long as we are never parallel to $(b, -a)$, in order 
to get a symplectic cylinder.\hfill$\Box$   
\end{remark}

The set $B_0$ of regular values of $\pi: X^{4} \to B^2$ carries naturally an
affine structure, and circling around a node, i.e. the image of a nodal critical point,
provides a monodromy for this affine structure, that is a shear with respect to
some eigendirection associated to the node -- see \cite{SymingtonLeung,Symington} for
details. In local action-angles coordinates (see Remark \ref{rmk:Symp_Curves}),
where we have $\pi_{|V} : V \cong U \times T^2 \to U \subset \R^2$, with $U
\hookrightarrow B$, this affine structure is identified with the standard lattice $\Z^2
\subset \R^2 \cong T_b^*U$ for each $b \in U$. In the sense of \cite[Definition
1.24]{Gross}, $B$ is viewed as an integral affine surface with singularities.
 
If $B$ is topologically a disk, an ATF can be described by an almost toric base diagram (ATBD)
\cite[Section~5.2]{Symington}. We choose a set of cuts $\mC$ consisting of rays on $B_0$ from the nodes to
$\del B$, so we don't have monodromy on $B_0 \setminus \mC$ and hence we can get an affine embedding of $B_0
\setminus \mC$ into $\R^2$. The closure of the image is the ATBD. The monodromy around the nodes informs us
how to glue the limits as we approach the cuts from both sides (for more details see \cite[Figure~7 and
Section~5.2]{Symington}). In particular, if we choose a cut associated to a node to be a ray in the direction of an
eigenvector of the monodromy associated to that node, as we approach a point in the cut from both sides, the
limit of the images under the embedding $B_0 \setminus \mC \to \R^2$ agree. In this paper we always assume that all
the cuts satisfy this condition and $B$ is topologically a disk. Hence, we can extend the affine embedding
$B_0 \setminus \mC \to \R^2$ to a continuous map $B \to \R^2$ and the ATBD is depicted as a polytope $P$, with
nodes (represented by $\times$ or $\star$) in the interior representing the nodal fibres, and cuts
(represented by dashed segments) towards the edge, that encode the monodromy around the singular fibres.

\begin{example} Figure \ref{fig:CP2} is an ATBD, with underlying polytope $P$, representing
an ATF of $(\CP^2,\omega_\st)$, see also Figure \ref{fig:FirstMutationP2} and Figure \ref{fig:IntroTable}, and \cite{Vianna3} for several ATBDs representing ATFs of
Del Pezzo surfaces. To each cut, we associate a monodromy
matrix in $SL(2, \Z)$, with eigenvector in the direction of the cut, representing, in the standard basis of $\R^2$, the
monodromy associated to a counter-clockwise loop around the node.\hfill$\Box$
\end{example} 
 
\begin{figure}[h!]   
  \begin{center} 
 \centerline{\includegraphics[scale=0.5]{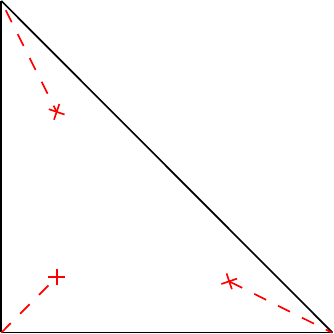}}

\caption{ATBD for $\CP^2$.}
\label{fig:CP2} 
\end{center} 
\end{figure}

\begin{remark} In an ATBD with underlying polytope $P$, if a vertex has no cut arriving into it, then it
represents a corank two elliptic singularity. Its pre-image is a point and a neighbourhood of the vertex
describes a toric structure on its pre-image, hence we call it a \textit{toric vertex}. Otherwise the vertex is inside
the cut and its pre-image is a circle of corank one elliptic singularities.\hfill$\Box$ \end{remark}

%The symplectic curves we construct projects to a neighborhood of a graph 
%embedded in the base of the ATF, in line with Mikhalkin's  
%amoeabas projecting to the neighborhoods of a tropical spine \cite{Mi04a,Mi04b,Mi19b}. In our case, we have more flexibility since 
%we only require the curve to be symplectic, not necessarily holomorphic. A series of recent works 
%by several authors \cite{MR19,Ma18a,Mi19a,Hi19} 
%shows how to get Lagrangians from tropical data, 
%which is also much more rigid then getting symplectic curves, 
%as one would expect. To describe our symplectic surfaces we need to develop terminology in Subsection \ref{ssec:Symp_Trop}.
%
%%%%%%%%%%%%%%%%%%%%%%%%%%%%%%%%%%%%%%%%%%%%%%%%%%%%%%%%%%%%%%%%%%%%%%
%%%%%%%%%%%%%%%%%%%%%%%%%%%%%%%%%%%%%%%%%%%%%%%%%%%%%%%%%%%%%%%%%%%%%%

\subsection{Almost-Toric Base}\label{ssec:SymplecticAlmostToric}

In this section we develop a notation for base diagrams that is suitable for almost toric mutations; it is more in line
with the notions of mutations \cite{GalkinUsnich10,PolyMut1,PolyMut2} in algebraic geometry that arose 
from the pioneering work \cite{GalkinUsnich10}. 

%Let $\SP$ be an integral affine surface with singularities \cite[Definition
%1.24]{Gross}. In this article, the regular part $\SP^{reg}$ is always presented
%as integrally embedded in the standard integral affine structure for $\R^2$, given by the standard inclusion $\Z^2\sse\R^2$. For
%our purposes, we define:

\color{black}

\begin{definition}\label{def:ATBD}
An almost toric base content $\SP=(P,\SB)$ is a pair where 
$$P=\convex(v_1,\ldots,v_{|V(P)|})\sse\R^2$$
is the polytope defined by the convex hull of counter-clockwise cyclically ordered $|V(P)|$ vertices $v_i\in\R^2$, 
$1\leq i\leq |V(P)|$ and $\SB$ is the cut content of $\SP$. The cut content $\SB$ is defined as the union of pairs 
$B_i$, $1\leq i\leq |V(P)|$, where $B_i=(c_i,n_i)\in\R^2\times\N$ consists of a primitive vector $c_i\in \Z^2 \subset \R^2$,
pointing inside $P$ from $v_i$, 
and a positive number $n_i\in\N$, which accounts for the total number of nodes along $c_i$. Let us denote by $(v)_p$ the primitive vector pointing in the direction of $v$. The following consistency condition 
must be satisfied:

\begin{equation} \label{eq:Cut_content}
   (v_i - v_{i-1})_p = M_i^{n_i}(v_{i+1} - v_{i})_p,
\end{equation}

\color{black}
where we take the indices $i \in \Z/|V(P)|\Z $ and $M_i\in GL(2,\Z)$ denotes the shear in $\R^2$ with respect to 
$c_i$, given by $M_i v := v + \det[c_i,v] c_i$. For a matrix form of $M_i$ in terms of $c_i = (a_i,b_i)$ see \cite[(4.11)]{Symington}.

By definition, an almost toric base is an almost toric base content $\SP=(P,\SB)$ , together 
with choices of $r_{i,j} \in \R^{\geq0}$, for $j = 1 , \dots, n_i$, $r_{i,j} < r_{i,j-1}$, so that the cuts 
$v_i + tc_i$, $t\in [0,r_{i,1}]$ are disjoint and inside $P$. The $n_i$ nodes associated with $v_i$ are said to be positioned at $v_i + r_{i,j}c_i$.\hfill$\Box$
\end{definition}

Definition \ref{def:ATBD} is suitable for almost toric fibrations \cite{Symington,SymingtonLeung} whose base
is topologically a disk, as shall be the case in this manuscript. In the case that $(X,\omega)$ is a Del Pezzo
surface, which symplectically means that the symplectic structure is monotone, and all the cuts point towards the monotone fibre that, up to translation, 
we assume centered at $0$. In that case, we have that the cut at the vertex $v_i$ is $c_i = - (v_i)_p$ and the number $n_i\in\N$ is the number of singular nodes along the cut at $v_i$,
placed in the positions $v_i + r_{i,j}c_i$, for $j = 1 ,\dots, n_i$, with $r_{i,1} < |v_i|$. \color{black}  For instance, if $v_i$ is a toric vertex
then $B_i=(c_i,1)$ and $r_{i,1} = 0$. We have chosen the notation {\it cut content} in line with the {\it
singular content} as defined in \cite{PolyMut3}. From now onwards, we informally refer to $\SP=(P,\SB)$ as an
almost toric base, referring to an almost toric base with almost toric base content $\SP=(P,\SB)$. Given an
almost toric base $\SP=(P,\SB)$, we denote by $X(\SP)$ the almost toric symplectic 4-manifold associated to
the almost toric base diagram defined by the polytope $P$ and cut content $\SB$, as constructed in
\cite{SymingtonLeung} (cf.~ also \cite[Section 5]{Symington}, \cite[Section 3]{Vianna1}).

\begin{remark} \label{rmk:ToricATF}

If $P$ is a Delzant polytope, it represents a smooth toric symplectic manifold $X(P)$. In this case its cut content 
satisfies $n_i = 1$ and $c_i = (v_{i+1} - v_{i}) - (v_i - v_{i-1})$ (the Delzant condition implies that in the 
counter-clockwise orientation, $\det[c_i, (v_{i+1} - v_{i})_p] = -1$). Moreover, its toric base satisfies $r_{i,1} = 
0$, for all $1\leq i\leq |V(P)|$. We call that the $0$-content (or empty content) associated to the Delzant polytope $P$. 

It $P$ is not Delzant, than it represents a singular toric symplectic manifold $X(P)$. If all singularities
are $T$-singularities, in the sense of \cite{AkKa14}, then each singularity is of the form
$$\frac{1}{n_il_i^2}(1, n_il_ik_i - 1),$$
following the notation in \cite{AkKa14}, for some $n_i,l_i,k_i\in\N$. If we identify $(v_{i-1} -
v_{i})_p$ with $(0,1)$, and $(v_{i+1} -
v_{i})_p$ with $(n_il_i^2, n_il_ik_i - 1)$, then $c_i = (l_i,k_i)$. In consequence, the base content $(P,\SB)$ is extracted from the singularity types and the multiplicity
$n_i$ is such that equation \eqref{eq:Cut_content} is satisfied. Also, the weight $n_il_i^2$ of the singularity satisfies
$l_i = |\det[c_i, (v_{i+1} - v_{i})_p]|$.  
\color{black} In this case, $X(P)$ is smoothable and the smoothing has an
almost toric fibration with base content $(P,\SB)$, hence we denote the smoothing by $X(P,\SB)$.\hfill$\Box$

\end{remark}

  \begin{remark} \label{rmk:nodal_mod} 
    Symington defined modifications of a base diagram \cite[Section~6.1]{Symington} called \emph{nodal trade} and \emph{nodal slide} and 
    proved that they provide diagrams for another ATF of the same symplectic manifold. 
    Modifying the ATF by changing $r_{i,n_i} = 0$ to $r_{i,n_i} > 0$ corresponds to a 
   \emph{nodal trade} \cite[Section~6.1]{Symington}. Modifying the ATF by changing the value of $r_{i,j} > 0$ to another positive value 
  corresponds to a \emph{nodal slide} \cite[Section~6.1]{Symington}. A further explanation is provided in Remark \ref{rmk:tradeslide}, and we encourage the reader to study \cite{Symington} for more details. \hfill$\Box$
\end{remark}

\begin{remark} \label{rmk:Toric_vertex_Fig1}
  In some diagrams of Figure \ref{fig:IntroTable}, there are ``nodes'' with $r_{i,n_i} = 0$. 
  Though the vertex of the polytope is not Delzant if $n_i > 1$, when $r_{i,n_i} = 0$, over the vertex lies a corank two
  elliptic singularity and a neighbourhood of the vertex, \emph{excluding the other associated nodes}, is indeed toric.
  Though it represents a corank two elliptic singularity, we still depict it with $\times$ or $\star$ over the vertex $v_i$, 
  if $n_i > 1$. We can only make $r_{i,n_i} = 0$ and remain almost toric if the corresponding singularity for 
  the toric orbifold $X(P)$ (as we disregard the nodes and cuts) is a Du Val singularity of $A_{n_i - 1}$ type.
  Modifying $r_{i,n_i} > 0$ to $r_{i,n_i} = 0$ in this case can be regarded as the inverse of a nodal trade. \hfill$\Box$
  
  %Otherwise we could still think of a singular Lagrangian 
 % fibration by making $r_{i,n_i} = 0$ ...
  \end{remark}

\begin{center}
	\begin{figure}[h!]
		\centering
		\includegraphics[scale=0.75]{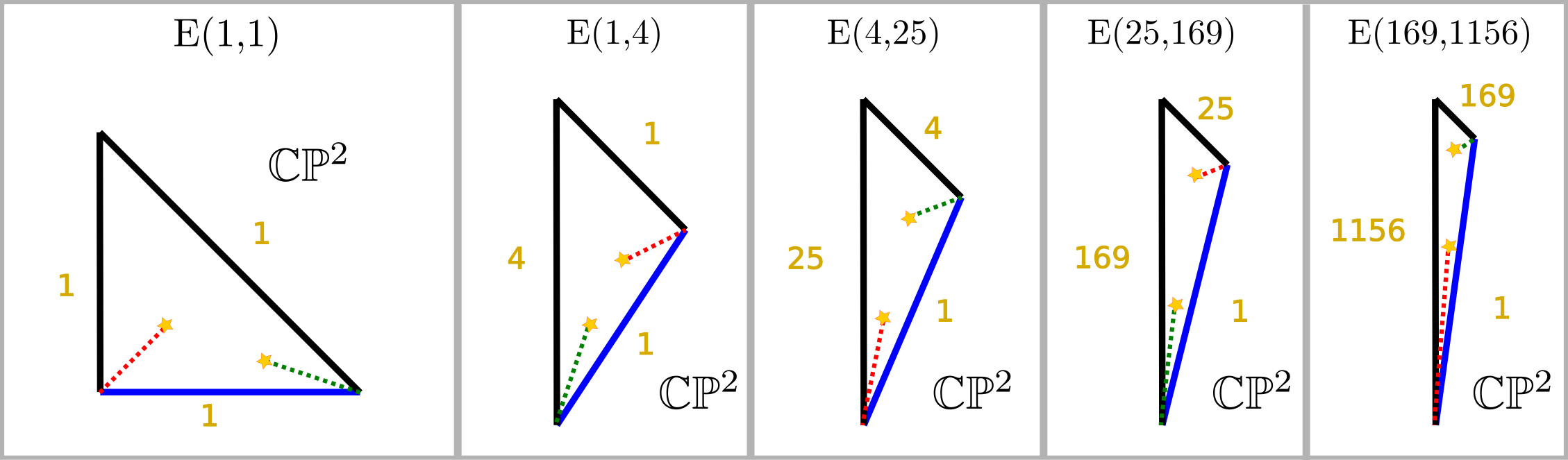}
		\caption{Almost-toric bases for rescalings of $(\C\P^2,\omega_\st)$.}
		\label{fig:FirstMutationP2}
	\end{figure}
\end{center}

%\begin{remark} It is possible to adapt this definition to incorporate partial smoothing, adjusting the monodromy condition with additional data given by a residual vector associated to the vertex $v_i$. That would give rise
%	to almost toric fibrations on orbifolds.\hfill$\Box$
%\end{remark}

\color{black}
\begin{example}\label{ex:FirstMutationP2}[\cite{Manetti91}, see also \cite{AG1}] Consider the five
almost toric bases $\SP_i=(P_i,\SB_i)$, $1\leq i\leq5$, depicted in Figure \ref{fig:FirstMutationP2},
increasingly ordered left to right. In this case, the five symplectic almost toric varieties $X(\SP)$ are
smooth 4-manifolds and, in fact, rescalings of $(\C\P^2,\omega_\st)$. The toric varieties
$X(P_i)$ are singular for $2\leq i\leq5$, respectively corresponding to the weighted
projective spaces $\C\P^2(1,1,4)$, $\C\P^2(1,4,25)$, $\C\P^2(1,25,169)$ and $\C\P^2(1,169,1156)$. These
correspond to the fact that $\C\P^2$ is a smoothing of each of these weighted projective spaces.\hfill$\Box$
\end{example}

Let us now define {\it mutation} of an almost toric base.\footnote{The mutation of an almost toric base is a different 
almost toric base with the property that, under the construction of \cite{SymingtonLeung} (or see Subsection \ref{ssec:PolytopeMutation}), the symplectic 4-manifolds associated to both bases will coincide -- for the basics on cut transfers, see \cite{Vianna2, Vianna3}.}

\begin{definition} \label{dfn:mutation}
 An almost toric base content $\mu_{v_i}^k \SP=(\mu_{v_i}^kP, \mu_{v_i}^k\SB)$ 
  is said to be \emph{mutated} from the almost toric base content $\SP=(P,\SB)$ 
  at the vertex $v_i$ with order $k \in \{1,\dots,n_i\}$ if it satisfies the 
  following:
  
  \begin{itemize}
    \item[-] Consider the ray $\gamma_i$ that leaves $v_i$ in the 
  direction of $c_i$, and $\tilde{v}$ the intersection with $\del P$. The 
  polytope is divided by $\gamma_i$ into $P = P^{(1)} \cup P^{(2)}$, with 
  $v_{i-1} \in P^{(1)}$ and $v_{i+1} \in P^{(2)}$. Then 
  $\mu_{v_i}^kP = P^{(1)} \cup M_i^{k}(P^{(2)})$ or $\mu_{v_i}^kP = M_i^{-k}(P^{(1)}) \cup P^{(2)}$. 
    
    \item[-]  The consistency condition for $\mu_{v_i}^k \SP$ implies that, 
  if $k < n_i$, $v_i$ is a vertex of $\mu_{v_i}^k \SP$, with
  content $(c_i, n_i - k)$. If $k = n_i$, $v_i$ is not
  a vertex of $\mu_{v_i}^k \SP$. If $\tilde{v} = v_j$ for some $j\ne i$, 
  we assume that $c_j = -c_i$, and the content of  
  $\tilde{v}$ is $(c_j, n_j + k)$, otherwise, the 
  content of $\tilde{v}$ is $(-c_i, k)$.
  \end{itemize}
  
  An almost toric base is said to be mutated from another at 
  the vertex $v_i$ with order $k \in \{0,\dots,n_i\}$, if their almost toric base contents are related as above and 
  the position of all the nodes in $\gamma_i$ are the same. A mutation of order is $1$ is said to be a \emph{single} mutation, and a {\it full} mutation will refer to a mutation of order $n_i$ at the vertex $v_i$.\hfill$\Box$
\end{definition}

%\color{red}
%[RV: This is not quite right from our definition, I added a Remark \ref{rmk:ToricATF} above that corrects it.] 
%An almost toric base $\SP=(P,\varnothing)$ with empty cut content yields a
%complex 2-dimensional toric variety $X(P)$. The introduction of
%non-empty cut content $\SB$ geometrically corresponds to 
%smoothing $X(P,\SB)$ of $X(P)$ \cite{Gorenstein}.
%
%In Example \ref{ex:FirstMutationP2}, the addition of the cut contents $\SB_i$ removes the singularities of the symplectic toric variety $X(P_i,\varnothing)$, $2\leq i\leq5$.

%The singularities of the toric varieties we study in this article are cyclic quotient singularities.
% In general, these singularities cannot be smoothed via a $\Q$-Gorenstein deformation, and a singular residue typically persists upon such a deformation \cite{PolyMut3}.

\color{black}

The symplectic domains $(X,\omega_X)\in\SH$ featured in Theorem \ref{thm:main} are open subsets of the symplectic 4-manifolds $X(\SP)$ obtained by removing collections of symplectic and Lagrangian 2-spheres. (The collection of Lagrangian 2-spheres being removed might be empty in some cases.) Let us introduce a combinatorial structure to encode these surface configurations.

\begin{definition}
A relative almost toric base consists of a pair $(\SP,\SS)$ where $\SP=(P,\SB)$ is an almost toric base, and $\SS=S\cup \mathscr{L}$ is a set consisting of sides of the polytope $P$, forming a subset $S\sse\SS$, and segments $l\sse\SS$ within each cut, a segment being bounded by two consecutive nodes, forming a subset $\mathscr{L}\sse\SS$.\hfill$\Box$
\end{definition}

In the cases of Theorem \ref{thm:main}, the sides $S\sse\SS$ yield symplectic 2-spheres
\cite{SymingtonLeung,Symington} and the segments $\mathscr{L}\sse\SS$ give rise to Lagrangian 2-spheres in the
symplectic 4-manifold $X(\SP)$, as visible surfaces \cite[Section 7]{Symington}. Indeed, the cut
content $\SB$ of an almost toric base defines a sequence of Lagrangian 2-spheres $L^j(B_i)$, $1\leq j\leq
n_i - 1$, $1\leq i\leq |V(P)|$, whenever $n_i \ge 2$. \color{black} By definition, $L^j(B_i)$, $1\leq j\leq n_i - 1$, will be
the exact Lagrangian 2-sphere with matching cycle the segment in the direction $c_i\in\R^2$ from the $j$th
node to the $(j+1)$th node, where nodes are ordered increasingly from the vertex $v_i$ inwards.

Consider a relative almost toric base $(\SP,\SS)$. We denote by $D(\SS)\sse X(\SP)$ the configuration of symplectic divisors $D(S)$, associated to the pre-image of the sides of $S\sse\SS$ under the almost toric fibration, and by $L(\mathscr{L})$ the collection of Lagrangian spheres in the symplectic 4-manifold $X(\SP)$ associated to the segments in $\mathscr{L}\sse\SS$; cf.~ \cite{SymingtonLeung,Vianna3}. Note that for the same choice of sides $S$ in $P$, the topology of $D(S)$ typically depends on the cut content $\SB$ of $\SP$. We shall denote
$$X(\SP,\SS):=X(\SP)\setminus (D(S)\cup L(\SL)),$$ the symplectic complement of the symplectic divisors $D(S)$ and the Lagrangian spheres $L(\SL)$ in $X(\SP)$.

\begin{example} The unique cut in the rightmost relative almost toric base $(\SP,\SS)$ of the first row on Figure \ref{fig:IntroTable} yields a Lagrangian 2-sphere $S^2\sse X(\SP)\cong\C\P^1\times\C\P^1$. This is the unique Lagrangian sphere in $\C\P^1\times\C\P^1$, in the same Hamiltonian isotopy class as the anti-diagonal \cite{LagrangianSphere1}. The blue side $S$ in this almost toric base gives a symplectic divisor in the homology class of $D(S)=\C\P^1\times\{pt\}\sse X(\SP)$, and thus $X(\SP,S)\cong\C\P^1\times\bD^2(1)$. Finally, the complement of the anti-diagonal Lagrangian and the symplectic 2-sphere $D(S)$ is symplectomorphic to the ellipsoid $X(\SP,\SS)\cong E(1,2)$, as can be seen from the almost toric diagram. (E.g.~toric diagrams for ellipsoids $E(1,k)$ are provided in \cite[Section 2]{McDuff1} and then see \cite{Vianna3} for the almost toric modification where the anti-diagonal Lagrangian 2-sphere is isotoped to be above a vertex.)\hfill$\Box$
\end{example}

%%%%%%%%%%%%%%%%%%%%%%%%%%%%%%%%%%%%%%%%%%%%%%%%%%%%%%%%%%%%%%%%%%%%%%
%%%%%%%%%%%%%%%%%%%%%%%%%%%%%%%%%%%%%%%%%%%%%%%%%%%%%%%%%%%%%%%%%%%%%%

\subsection{Polytope Mutations in Almost-Toric Diagrams}\label{ssec:PolytopeMutation}
Let $(X,\omega_X)$ be a 4-dimensional closed {\it toric} symplectic manifold. The moment map $\m:X\lr\R^2$ associated to the Hamiltonian $T^2$-action yields a convex polytope $\m(X)=P_X\sse\R^2$. This polytope is the base $\SP=(P,\varnothing)$ of the Lagrangian toric fibration $\m:X\lr \m(X)$, which is a singular fibration at the boundary $\dd(P_X)=\m(X)\setminus\mbox{Int}(\m(X))$, with isotropic fibers. The standard affine structure $\Z^2\sse\R^2$ in the real plane endows the base of this fibration with an affine structure, which itself coincides with the affine structure associated to the Lagrangian fibration \cite{Ar_book}.

%\color{violet}
Recall from Definition \ref{dfn:ATF} that an {\it almost toric} fibration on a symplectic 4-manifold
$(X,\omega_X)$ contains the data of a smooth surface $B$ and a smooth map $\pi: X \to B$ with Lagrangian 
fibres (which here we assume compact), whose singular fibres can be toric 
(a corank 2 singularity or a circle of corank 1 singularities) or nodal. The regular locus $B_0$ is 
endowed with an integral affine structure. We assume $B$ is a disk, and a suitable choice
of cuts $\mC$ -- recall Section~\ref{subsec:ATFs} -- gives rise to a continuous map $B \to P_X \subset \R^2$, 
that is affine when restricted to $B_0 \setminus \mC$ (with respect to the standard affine structure $\Z^2 \subset 
\R^2$, generated by the unit vectors tangent to the coordinate axes \cite[Example 2.8]{Symington}). We encode the information of the nodes and cuts into an almost toric base $\SP=(P_X,\SB)$.
The composition $X \xrightarrow{\pi} B \to P_X$ can be regarded as an analogue of the moment map $\m:X\lr\R^2$ and
also yields a convex polytope $\m(X)=P_X\sse\R^2$, see \cite{Symington}. 

Let $(X,\omega_X)=X(P,\SB)$ be an almost toric symplectic variety with
almost toric base $(P,\SB)$, and $v\in P$ be a vertex of its polytope $P=\m(X)$, with content $(c_v,n_v)$. Let us construct a family of Lagrangian {\it almost toric} fibrations

$$\m^t:(X,\omega)\lr\R^2,\quad t\in[0,1],$$ with $\m^0=\m$ and the
image $\m^1(X)$ being the polytope $\mu_v^{1}(P_X)$ obtained by
combinatorially mutating $P_X$ at $v\in P_X$, as defined in \cite[Section
3]{PolyMut1}, which corresponds to an order one mutation in Definition
\ref{dfn:mutation} above. This shall be denoted $\m^1 = \mu_v^{1}(\m)$.

The crucial fact is that these are all fibrations of the
{\it same} symplectic variety $(X,\omega_X)$. It is only the presentation of
$(X,\omega_X)$ as an almost toric symplectic domain that varies. In a nutshell,
the geometric idea behind Theorem \ref{thm:main} is that different almost toric
fibrations for $(X,\omega_X)$ make different full embeddings of symplectic
ellipsoids in $(X,\omega_X)$ visible.
%
%\color{red} RV: I added Remark 2.8 that feels more suitable for describing nodal trade and nodal slides in the 
%notation of cut content that we created. So I think we can remove it.

\begin{remark}\label{rmk:tradeslide} Following a suggestion by the referee, we recall the two notions of {\it nodal slide} and {\it nodal trade}, as introduced in \cite{Symington}, and refer also to \cite{SymingtonLeung,Vianna1,Vianna2,Vianna3} among others. Recall that two almost toric bases $(B,A_1,S_1)$ and $(B,A_2,S_2)$, $A_i$ integral affine structures on $B$, and $S_i$ the stratifications of the toric base (see \cite[Section 5]{Symington}), $i=1,2$, are said to be related by a nodal slide if there is a curve $\gamma$ in the base $B$ such that $(B\setminus\gamma,A_1,S_1)$ and $(B\setminus\gamma,A_2,S_2)$ are isomorphic and the curve $\gamma$ contains one node and belongs to the eigenline through that node.
	
For the definition of nodal trade, we follow \cite[Lemma 6.3]{Symington}. This result states that given an almost toric base $(B,A,S)$, $R$ an embedded eigenray that connects a node $s$ with a point $b$ in an edge $E$ of the reduced boundary of $B$ where there are no other nodes on $R$, and $v^*,w^*\in T^*_bB$ vanishing and collapsing covectors spanning the integral (dual) affine structure in $T^*_bB$, then there is an almost toric base $(B,A',S')$ such that

\begin{enumerate}
	\item $(B,A',S')$ contains one fewer node than $(B,A,S)$,
	\item $(B\setminus R,A',S')$ is isomorphic to $(B\setminus R,A,S)$,
	\item In $(B,A',S')$, the intersection of the ray $R$ and the reduced boundary is a vertex.
\end{enumerate}
By definition, two almost toric bases $(B,A,S)$ and $(B,A',S')$ are said to be related by a nodal trade if they satisfy these hypotheses.\hfill$\Box$
\end{remark}

\color{black}

Let us detail the steps producing the family of maps $\m^t$.

%which will be discontinuous at $t= 1$, but still represents a smooth change of almost toric fibrations on $(X,\omega_X)$:

\begin{itemize}
	\item[1.] First, the introduction of a {\it nodal trade} at $v$ (if $n_v= 1$ and $r_{1,1} = 0$) and subsequent
	 {\it nodal slides} at the $n_v$-th node in direction $c_v$, as defined by M. Symington \cite{Symington}, produces 
	 a family of almost toric fibrations,	 described by $\m^t$, $t\in[0,1)$, 
	 with $\m^0=\m$, converging to ${\m'}^1$ as $t\to 1$, all with polytope $P_X$. 
	% In case that the cut content $\SB$ is non-zero at $v$, the {\it nodal trade} is not necessary.

	\item[2.] Second, apply a single mutation to the almost toric base for ${\m'}^1(X)$, 
	obtaining another map $\m^1:(X,\omega)\lr \R^2$, 
	by composing ${\m'}^1$ with the map $P_X \lr \mu_v^{1}(P_X)$ in Definition \ref{dfn:mutation}.
\end{itemize} 

We assume that we slide the $n_v$-th node long enough so that it is very close to
the (often new) vertex $\tilde{v}$ coming from mutation. In particular, in the case $X$ is Del Pezzo, we also assume that the node crossed $0 \in \R^2$, which represents the monotone 
fibre. For now, let us assume that $P_X$ is a triangle, which
will simplify our notation, and that we apply full mutations, so
we consider $\mu_v := \mu_v^1 \circ \cdots \circ \mu_v^1 = \mu_v^{n_v}$.

The process described in the steps above can be iterated for the initial symplectic domain $(X,\omega_X)$, as follows. Let $V=(v_n)_{n\in\N}$ be a sequence of vertices, with $v_1,v_2$ distinct, such that $$v_i\in\mu_{v_{i-1}}\mu_{v_{i-2}}\circ\ldots\circ\mu_{v_1}(P_X), \quad i\in\N,$$
and $v_{j}=\widetilde{v}_{j-2}$ for all $j\in\N$. In short, we choose two vertices $v_1,v_2\in P_X$ in a triangle and we first mutate at $v_1$ and then at $v_2$. Then we mutate at the new vertex $v_3=\widetilde{v}_1\in\mu_{v_2}\mu_{v_1}(P_X)$, which was first opposite to $v_1$ in $P_X$, and then at $v_4=\widetilde{v}_2\in\mu_{v_3}\mu_{v_2}\mu_{v_1}(P_X)$, which was initially opposite to $v_2$ in $\mu_{v_1}(P_X)$. Note that the triangle $P_X$ has a vertex $v_f\in P_X$ which remains fixed under these iterative mutations.

\begin{example}[\cite{GalkinUsnich10}]
Figure \ref{fig:FirstMutationP2} shows a sequence of rescaled almost toric fibrations $\{\m^t\}_{t\in[0,4]}$ at the values $t=0,1,2,3,4$, on the symplectic manifold $(\C\P^2,\omega_\st)$. First, there is a mutation from the toric moment polytope $P$ for $(\C\P^2,\omega_\st)$ to $\SP_1=(P_1,\SB_1)$, where $\mu^1(P)$ is the toric moment polytope for $\C\P^2(1,1,4)$. The second mutation moves from $\SP_1=(\mu^1(P),\SB_1)$ to $\SP_2=(\mu^2(P),\SB_2)$, where $\mu^2(P)$ is the toric moment polytope for $\C\P^2(1,4,25)$. The third mutation leads to the almost toric base with polyope $\mu^3(P)$, the toric moment polytope for $\C\P^2(1,25,169)$, and the fourth mutation arrives at $\mu^4(P)$, the toric moment polytope for $\C\P^2(1,169,1156)$. This procedure can be iterated indefinitely, yielding almost toric bases with their polytope $\mu^n(P)$ being the toric moment polytope for $\C\P^2(1,a_n,b_n)$, where $(a_n,b_n)$ will always be squares of consecutive odd Fibonacci numbers. A clear understanding of this sequence of almost toric mutations makes Theorem \ref{thm:main} for the unit 4-ball $\bD^4(1)$ much more intuitive. Note nevertheless that we require the symplectic ellipsoid embeddings to lie in $\bD^4(1)$ and not just a compactification.\hfill$\Box$
\end{example}

%%%%%%%%%%%%%%%%%%%%%%%%%%%%%%%%%%%%%%%%%%%%%%%%%%%%%%%%%%%%%%%%%%%%%%
%%%%%%%%%%%%%%%%%%%%%%%%%%%%%%%%%%%%%%%%%%%%%%%%%%%%%%%%%%%%%%%%%%%%%%

\section{Existence of Sharp Sequences of Ellipsoid Embeddings}\label{sec:proofmain}

%%%%%%%%%%%%%%%%%%%%%%%%%%%%%%%%%%%%%%%%%%%%%%%%%%%%%%%%%%%%%%%%%%%%%%
%%%%%%%%%%%%%%%%%%%%%%%%%%%%%%%%%%%%%%%%%%%%%%%%%%%%%%%%%%%%%%%%%%%%%%

The central idea that this manuscript introduces is the use of {\it almost toric mutations} in the study of symplectic ellipsoid embeddings. In this section we prove our main result Theorem \ref{thm:main}, assuming Theorem \ref{thm:ExistenceTropicalSymplectic}, whose proof will require the tropical symplectic techniques developed in Section \ref{sec:Symp_Trop}.

Let us start by explaining how to construct symplectic ellipsoid embeddings in an almost toric domain $X(\SP,\SS)$ with $(\SP,\SS)$ a relative almost toric base.

\begin{definition}\label{def:triangle}
	Let $P\sse\R^2$ be a convex polygon and $v_0,v_1,v_2\in V(P)$ three vertices of $P$. The {\it triangle} $\overline{T}_{v_0,v_1,v_2}\sse P$ is the convex hull of $v_0,v_1,v_2$. For $\varepsilon\in\R_{\geq0}$, the {\it open $\varepsilon$-triangle} $T^\varepsilon_{v_0,v_1,v_2}\sse P$ is the complement $\overline{T}_{v_0,v_1,v_2}\setminus t^\varepsilon$, where $t^\varepsilon\sse \overline{T}_{v_0,v_1,v_2}$ is an $\varepsilon$-neighborhood of the (open) side $\conv(v_0,v_2)$ connecting $v_0,v_2\in V(P)$.\hfill$\Box$
\end{definition}

Let $(X,\omega)=X(\SP,\SS)$ be symplectic almost toric manifold with polytope $P\sse\SP$, and assume that $v_1\in V(P)$ is a smooth toric vertex. Let $v_0,v_2$ be the two vertices in $P$ closest to $v_1$. If the open triangle $T^\varepsilon_{v_0,v_1,v_2}\sse P$ does not contain any critical values for the almost toric fibration $\m_X:X\lr P$, the pre-image of $T^\varepsilon_{v_0,v_1,v_2}$ under $\m$ yields a symplectic embedding
$$i_{v_1}:E(a,b)\lr (X,\omega),$$
where $a,b$ are the affine lengths of the two sides $\conv(v_0,v_1)\cap T^\varepsilon_{v_0,v_1,v_2}$, $\conv(v_1,v_2)\cap T^\varepsilon_{v_0,v_1,v_2}$ respectively. This construction is a potential method for construction symplectic ellipsoid embeddings, but it has the following two disadvantages.

First, for a given $\varepsilon\in\R^{>0}$, there are only finitely many open triangles in a moment polytope $P\sse\R^2$ for $(X,\omega)$ and thus, even if we considered the $\GL(2,\Z)$ action, this observation on its own is not sufficient to build an infinite sequence of ellipsoid embeddings. Second, unless $P$ is a triangle, i.e. $|V(P)|=3$, the symplectic ellipsoid embeddings of the form $i_{v_1}$ will not be volume-filling.

The first new geometric idea is that the symplectic manifold $(X,\omega)$ admits a sequence of {\it almost toric} fibrations $(\m_n)_{n\in\N}:X\lr\R^2$, as introduced in Section \ref{ssec:PolytopeMutation}, which themselves can be used to construct a sequence of symplectic ellipsoid embeddings. The images $\m_n(X)$ of these almost toric fibrations are also convex polytopes $P_n\sse\R^2$. In this almost toric case, the symplectic embeddings $i_{v_1}$ are built as above, where the main condition is that the open triangles $T^\varepsilon_{v_0,v_1,v_2}\sse P$ do not include any interior singular values of $\m_n$. (This condition is always satisfied in the toric case, as there are no interior singular values for the Lagrangian fibration.)

In Subsection \ref{ssec:SymingtonSeq}, we describe the properties for a sequence of almost toric fibrations $(\m_n)_{n\in\N}:X\lr\R^2$ such that each almost toric fibration admits a volume-filling open triangle $T^\varepsilon_{v_0,v_1,v_2}\sse P$, for arbitrarily small $\varepsilon\in\R^{>0}$. These open triangles give rise to symplectic embeddings
$$i^{(n)}_{v_1}:E(a_n,b_n)\lr (X,\omega).$$
The arithmetic of the sequence of pairs $(a_n,b_n)\in\N^2$ is governed by a diophantine equation which depends on the initial choice of $(\SP,\SS)$. These equations have featured prominently in birational geometry \cite{AG1,AG2,AG3} and the study of coherent sheaves \cite{Sheaf1,Sheaf2,Sheaf3}, and we present them systematically in Subsection \ref{ssec:ArithmeticSymington}.\\

The use of almost toric fibrations has many advantages, including the fact that we can build sequences of almost toric fibrations using the theory of {\it mutations} in algebraic geometry \cite{PolyMut1,PolyMut2,AG1}. Nevertheless, the standard techniques have the disadvantage that it requires our symplectic almost toric manifold $(X,\omega)$ to be closed. That said, the known tools for polytope mutations and algebraic degenerations do not include the relative case of symplectic divisors and Lagrangian submanifolds. This article starts developing techniques in this direction.

In Theorem \ref{thm:main}, we are interested in open symplectic toric domains $X(\SP,\SS)$, with $\SS\neq\varnothing$. In order to address this dissonance, we first compactify $X(\SP,\SS)$ to the closed symplectic toric manifold $X(\SP)$ by adding the surface configuration $D(\SS)\sse(X,\omega)$. Then we construct the sequence of almost toric fibrations $(\m_n)_{n\in\N}:X(\SP)\lr\R^2$ and, at the same time, keep track of the images of $D(\SS)\sse(X,\omega)$ under these almost toric fibrations. We achieve this latter step by developing a new theory of tropical symplectic curves, in Section \ref{sec:Symp_Trop},  and use the uniqueness of certain symplectic isotopy classes in symplectic 4-manifolds \cite{Ruled3,Ruled2,Ruled1} and the classification of Lagrangian 2-spheres in the monotone symplectic 4-manifolds $\C\P^1\times\C\P^1,Bl_3(\C\P^2),Bl_4(\C\P^2)$ \cite{LagSph3,LagrangianSphere1,LagSph4,LagSph2}.

In conclusion, the two main ingredients to execute the above scheme, and thus prove Theorem \ref{thm:main}, are:

\begin{itemize}
	\item[(i)] The construction of a sequence $(\m_n)_{n\in\N}:X\lr\R^2$ of almost toric fibrations, and its associated volume-filling symplectic ellipsoid embeddings $i^{(n)}:E(a_n,b_n)\lr (X,\omega)$. This is the content of Subsections \ref{ssec:SymingtonSeq} and \ref{ssec:ArithmeticSymington}.\\
	
	\item[(ii)] Showing that the images $i^{(n)}(E(a_n,b_n))$ of the symplectic embeddings in Part (i) lie in the complement of the compactifying divisor $D(\SS)\sse X(\SP,\SS)$. This will be shown in Subsection \ref{ssec:proofmain}.
\end{itemize}

The proof of Theorem \ref{thm:main} then proceeds in the following three steps. First, the existence and choice of a Symington Sequence, which addresses the first item above. Second, the construction of tropical configurations realizing $D(\SS)$ at each stage of the Symington Sequence, in Section \ref{sec:Symp_Trop}. Third, the study of the Hamiltonian isotopy class of these configurations in $X(\SP)$, which is the content of Subsection \ref{ssec:proofmain}. The second and third steps address the second item above.

\subsubsection{The symplectic domains in Figure \ref{fig:IntroTable}}
The combinatorial reason that sharp ellipsoid staircases can be constructed for the domains in Figure \ref{fig:IntroTable} is that they have almost toric basis with a smooth toric vertex, which admit infinitely many polytope mutations to triangular almost toric polytopes, with the smooth toric vertex fixed. The nine polytopes in Figure \ref{fig:IntroTable} are a subset of the 16 reflexive polytopes \cite{PolyMut3}.

These reflexive polytopes also yield one symplectic domain inside of $Bl_1(\C\P^2)$, and two symplectic domains inside of $Bl_2(\C\P^2)$, the three of them with infinitely many triangular polytope mutations. The arguments presented in this article provide infinitely many ellipsoid embeddings $E(1,a_n)$, with $(a_n)_{n\in\N}$ a convergent sequence, into $Bl_1(\C\P^2)$ and $Bl_2(\C\P^2)$. Nevertheless, these ellipsoid embeddings are {\it not} full. The twelve symplectic domains presented in \cite{DCG_Staircase} are precisely the nine domains in Figure \ref{fig:IntroTable} with the addition of these three domains. At this stage, it seems rather natural to ask about the remaining four, of the sixteen, reflexive polytopes. It might be possible to extract interesting staircases using our polytope mutation methods, however, the sequence of polytope mutations shall {\it not} be of triangular polytopes. The reader is referred to \cite{PolyMut1,PolyMut3} for a thorough study of the mutation classes of reflexive polytopes.\hfill$\Box$

%%%%%%%%%%%%%%%%%%%%%%%%%%%%%%%%%%%%%%%%%%%%%%%%%%%%%%%%%%%%%%%%%%%%%%
%%%%%%%%%%%%%%%%%%%%%%%%%%%%%%%%%%%%%%%%%%%%%%%%%%%%%%%%%%%%%%%%%%%%%%

\subsection{Realization of a Symington Sequence}\label{ssec:SymingtonSeq}

Given an almost toric symplectic 4-manifold $X(\SP)$, it is possible to mutate an almost toric fibration for $X(\SP)$ as in Subsection \ref{ssec:PolytopeMutation} in many ways. It is {\it not} true that {\it any} mutation, even if the underlying polytope is always triangular, will yield a full symplectic ellipsoid embedding.

\begin{example}\label{ex:NonEllipsoidMutation} We can mutate the triangular polytope $P_{\C\P^2}$ for $\C\P^2=X(P_{\C\P^2},\varnothing)$ to the toric moment polytope for the weighted projective space $\C\P^2(2,5,29)$. In fact, there are infinitely many ($\Q$-Gorenstein) degenerations of $\C\P^2$ obtained by mutating the triangular polytope $P_{\C\P^2}$, including $\C\P^2(5,29,433)$ and $\C\P^2(2,29,169)$ \cite{PolyMut2,AG1}. Neither of these weighted projective spaces readily admits a full symplectic ellipsoid embedding, since the three vertices of their toric moment polytopes are singular.\hfill$\Box$
\end{example}

Example \ref{ex:NonEllipsoidMutation} illustrates that a specific choice of sequence of mutations is needed in order to construct full ellipsoid embeddings with our method. This leads to the following definition, which we have named after M. Symington, after her exemplary article \cite{Symington}.

\begin{definition}\label{def:SymingtonSeq} A {\it Symington sequence} for an almost toric symplectic manifold $X(\SP,\SS)$ consists of a sequence of pairs $\{(\SP_n,v_n)\}_{n\in\N}$ such that:

\begin{itemize}
	\item[(S1)] $X(\SP_n)=X(\SP)$ for all $n\in\N$,
	
	\item[(S2)] $P_n$ is a triangle and $v_n\in V(P_n)$ is a vertex,
	
	\item[(S3)] $P_{n+1}=\mu_{v_n}(P_n)$, where $\mu_{v_n}(P_n)$ is the polytope mutation of $P_n$ at $v_n$,
	
	\item[(S4)] There exists $v_f\in\R^2$ such that $v_f\in V(P_n)$ is a smooth toric vertex, for all $n\in\N$.
\end{itemize}

The vertex $v_f$ in (S4) will be referred to as a {\it frozen} vertex, as it does not appear in the sequence $(v_n)_{n\in\N}$ specifying the sequence of mutations $(\mu_{v_n})_{n\in\N}$.\hfill$\Box$
\end{definition}

The crucial geometric properties in Definition \ref{def:SymingtonSeq} are (S1) and (S3), and the fact that we are interested in {\it ellipsoid} embeddings leads to requiring (S2) and (S4).

\begin{prop}\label{prop:ExistenceSymingtonSequence}
	Let $X(\SP,\SS)$ be a symplectic domain with relative almost toric base $(\SP,\SS)\in\SH$. Then there exists a Symington sequence for $X(\SP,\SS)$.
\end{prop}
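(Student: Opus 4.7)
The plan is to treat the nine cases in Figure \ref{fig:IntroTable} in a uniform manner by first passing, via nodal trades at the vertices along which cuts are drawn, to an almost-toric base $\SP_0$ of $X(\SP)$ whose polytope $P_0$ is a \emph{triangle} with at least one smooth toric vertex $v_f \in V(P_0)$. For each $(\SP,\SS) \in \SH$, the closed almost-toric manifold $X(\SP)$ is one of $\C\P^2$, $\C\P^1\times\C\P^1$, $Bl_3(\C\P^2)$ or $Bl_4(\C\P^2)$, and in each case a direct inspection of the diagram shows that by performing nodal trades at all vertices except $v_f$ and sliding the resulting nodes inward, the polytope degenerates onto a triangle $P_0$ fixing the distinguished smooth toric vertex $v_f$; this is explicit in Example \ref{ex:FirstMutationP2} for $\C\P^2$ and is analogous in the remaining cases.

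Next, write $P_0$ as the convex hull of $\{v_f, v_1^{(0)}, v_2^{(0)}\}$, with $v_1^{(0)}$ and $v_2^{(0)}$ the two vertices of $P_0$ that are not $v_f$, each equipped with cut content $(c_i, n_i)$ coming from the earlier nodal trades. The plan is to define the sequence $(\SP_n, v_n)_{n \in \N}$ by mutating alternately at the two non-frozen vertices: set $v_{2k+1} := v_1^{(2k)}$, $v_{2k+2} := v_2^{(2k+1)}$, and $\SP_{n+1} := \mu_{v_n}^{1}(\SP_n)$, where we apply a single mutation in the sense of Definition \ref{dfn:mutation}. By construction $\SP_{n+1}$ represents the same symplectic manifold $X(\SP)$ via Symington's cut transfer \cite{Symington}, which gives property (S1). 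Since a single mutation of a triangle at one of its vertices produces a triangle (the two sides adjacent to the mutated vertex are transported by the shear $M_{v_n}$, while the remaining side is preserved), property (S2) is maintained and the sequence of polytopes $P_n$ are all triangles.

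The key remaining points are (S3), which is built into the construction, and (S4), namely that $v_f$ remains a smooth toric vertex throughout. The mutation $\mu_{v_n}^1$ only alters the vertex $v_n$ and the opposite vertex $\widetilde{v}_n$ along the ray $\gamma_n$; since $v_f$ is neither $v_n$ nor, for the alternating choice above, the endpoint of the cut from $v_n$ (which lies on the edge of $P_n$ opposite $v_f$), the local affine model at $v_f$ is left unchanged by $\mu_{v_n}^1$. Hence the integer lattice cone at $v_f$ is the same smooth toric cone for every $n$, and (S4) holds. One then checks that each new vertex $\widetilde{v}_n$ acquires enough cut content (a single new node, in addition to any residual content) to be eligible for the subsequent mutation, which is automatic from the bookkeeping in Definition \ref{dfn:mutation}.

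The main obstacle is the verification that this alternating mutation procedure can be iterated indefinitely without violating convexity or exhausting the available cut content at the vertices being mutated, and that the arithmetic of the edges at $v_f$ behaves consistently so that property (S4) is genuinely preserved in every case of Figure \ref{fig:IntroTable}. This is handled by the explicit numerical identities underlying the mutation rule, which will be developed in the diophantine analysis of Subsection \ref{ssec:ArithmeticSymington}; in particular the affine lengths of the two edges at $v_f$ evolve according to a Markov-type recursion that keeps them coprime and primitive, which is exactly the condition needed for $v_f$ to remain a smooth toric vertex at every step.
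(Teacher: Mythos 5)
Your overall strategy — reduce to the triangular case with one smooth ``frozen'' vertex and then iterate mutations at the two non-frozen vertices, keeping $v_f$ fixed — is the same as the paper's. But there are two genuine errors in the details that cause the argument to fail as written.

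First, the preliminary reduction is misdescribed. You write that by ``performing nodal trades at all vertices except $v_f$ and sliding the resulting nodes inward, the polytope degenerates onto a triangle $P_0$.'' This is not how nodal trades and slides work: they alter the almost-toric fibration (and thus which fiber is nodal, etc.) but leave the underlying polytope $P_X$ unchanged. The square moment polytope of $\C\P^1\times\C\P^1$, the hexagon of $Bl_3(\C\P^2)$, and the pentagonal shape for $Bl_4(\C\P^2)$ do not become triangles under any sequence of nodal trades/slides. What you need are \emph{cut transfers}, i.e.\ the polytope mutations of Definition~\ref{dfn:mutation}, and the fact that a sequence of such mutations lands on a triangular ATBD with a smooth corner is exactly what the paper outsources to \cite{Vi16a} (and exhibits in Figure~\ref{fig:IntroTable}).

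Second, and more seriously, you define $\SP_{n+1} := \mu_{v_n}^{1}(\SP_n)$ using a \emph{single} (order-one) mutation and then assert that ``a single mutation of a triangle at one of its vertices produces a triangle.'' That claim is false whenever the cut content at $v_n$ has $n_{v_n} > 1$: Definition~\ref{dfn:mutation} says explicitly that after an order-$k$ mutation with $k < n_i$, $v_i$ \emph{remains} a vertex, while the ray endpoint $\tilde v$ becomes a new vertex, so the result is a quadrilateral. This is not a marginal case — for every $(\SP,\SS)\in\SH$ except $\C\P^2$ some non-frozen vertex has $n_i>1$ (e.g.\ $n_i=2$ at the $2r^2$-vertex for $\C\P^1\times\C\P^1$; $n_i\in\{2,3\}$ for $Bl_3$; $n_i=5$ at one vertex for $Bl_4$), so your iteration breaks down at the very first step of the Pell, Cristofaro-Gardiner--Kleinman, and $Bl_4$ staircases. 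Moreover, property (S3) in Definition~\ref{def:SymingtonSeq} is stated with $\mu_{v_n}$, which the paper (end of Section~\ref{ssec:PolytopeMutation}) fixes to mean the \emph{full} mutation $\mu_v^{n_v}$. The fix is simple: replace $\mu_{v_n}^1$ by the full mutation $\mu_{v_n}=\mu_{v_n}^{n_{v_n}}$ throughout, which removes $v_n$ as a vertex and creates the unique new vertex $\tilde v_n$ on the opposite side, preserving triangularity; the ``zigzag'' choice of the next vertex (the unique vertex distinct from $v_f$ and $\tilde v_n$) then matches the paper's construction. With these two corrections the remaining points you make — the local invariance of the smooth cone at $v_f$, hence (S4), and the unbounded iterability governed by the Markov-type recursion — are sound.
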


\begin{proof}
	The statement readily holds for those $X(\SP,\SS)$ such that $P\in\SP$ is a triangle with a smooth toric vertex. Indeed, if $P$ is a triangle, a Symington sequence is obtained by choosing one of the two vertices in $P$ different from $v_f$, and iteratively mutating and choosing the only vertex in the mutated polytope which differs from $v_f$ and the newly created vertex. In case $P\in\SP$ is not a triangle, we must directly verify that the almost toric diagram $\SP$ can be mutated to triangular almost toric diagram with a smooth toric vertex. This is explicitly shown in \cite{Vianna3}, and Figure \ref{fig:IntroTable} for $Bl_3(\C\P^2)$.
\end{proof}

The arithmetic of the vertices of a Symington sequence for each $X(\SP,\SS)\in\SH$ is discussed in the subsequent Subsection \ref{ssec:ArithmeticSymington}. The usefulness of a Symington sequence in the study of the Ellipsoid Embedding function $c_X$ comes from the following:

\begin{prop}\label{prop:EllipsoidInTriangle} Let $P\sse\R^2$ be a triangular polytope with a smooth toric vertex $v_f\in V(P)$, and let $(a,b)\in\Z^2$ be the affine length of the two sides of $P$ incident to $v_f$. Then there exists a full symplectic embedding $i:E(a,b)\lr X(P)$.\hfill$\Box$
\end{prop}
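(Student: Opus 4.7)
The plan is to combine the toric local normal form at the smooth vertex $v_f$ with Symington's nodal slides to produce the desired ellipsoid embedding inside $X(P)$. Since $P$ is triangular and $v_f$ is a smooth toric vertex, no cut emanates from $v_f$; all cuts issue from the other two vertices $v_0, v_2 \in V(P)$ and lie in a neighborhood of the side $\conv(v_0, v_2)$ opposite to $v_f$. I would first invoke Symington's theorem that nodal slides along cuts yield symplectomorphic almost-toric manifolds, arranging, for each $\varepsilon > 0$, that every node of the almost-toric base is pushed inside the $\varepsilon$-neighborhood $t^\varepsilon$ of $\conv(v_0, v_2)$. Then the open $\varepsilon$-triangle $T^{\varepsilon}_{v_0, v_f, v_2}$ from Definition \ref{def:triangle} will consist entirely of regular values of the almost-toric fibration $\pi : X(P) \lr P$.

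Next, after an integral affine change of coordinates placing $v_f$ at the origin with the two adjacent edges pointing along the standard coordinate axes, I would use the toric local normal form at $v_f$ to obtain a $T^2$-equivariant symplectomorphism between $\pi^{-1}(U)$, for a small neighborhood $U$ of $v_f$, and an open neighborhood of the origin in $(\bC^2, \omega_\st)$, carrying $\pi|_U$ to the standard toric moment map $\m_\st : (z_1, z_2) \mapsto (\pi|z_1|^2, \pi|z_2|^2)$. Because $T^{\varepsilon}_{v_0, v_f, v_2}$ is simply connected and contains no nodes, its integral affine structure has trivial monodromy, so this local model extends to a $T^2$-equivariant symplectomorphism $\pi^{-1}(T^{\varepsilon}_{v_0, v_f, v_2}) \cong \m_\st^{-1}(T^{\varepsilon}_{v_0, v_f, v_2}) \sse \bC^2$. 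The $\varepsilon$-triangle is, up to affine defects $\alpha(\varepsilon), \beta(\varepsilon) \to 0$ as $\varepsilon \to 0$, the right triangle at the origin with legs $a$ and $b$, and the preimage of such a right triangle under $\m_\st$ is precisely an open ellipsoid $E(a - \alpha(\varepsilon), b - \beta(\varepsilon))$.

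Letting $\varepsilon \to 0$ will yield symplectic embeddings of $E(a', b')$ into $X(P)$ for every $a' < a$ and $b' < b$, which assemble into the desired volume-filling symplectic embedding of $E(a, b)$. The main subtle point I anticipate lies in the extension step: one must verify that the integral affine structure on the regular locus of the almost-toric base has trivial monodromy on the simply connected region $T^\varepsilon_{v_0, v_f, v_2}$, so that the Arnold--Liouville action-angle coordinates constructed around $v_f$ patch into a single global chart over this triangle. This is precisely the role of the preliminary nodal slide step, since all affine monodromy in an almost-toric base is concentrated at the node loci, and the slides have been chosen to push those loci outside of $T^\varepsilon_{v_0, v_f, v_2}$.
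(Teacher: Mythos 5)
Your argument is correct, but it proves a strictly more general statement than Proposition \ref{prop:EllipsoidInTriangle} requires, and the extra machinery is unnecessary in the toric case. In the paper's conventions, $X(P)$ denotes $X(P,\varnothing)$, the (possibly singular) toric variety with empty cut content: there are no focus-focus nodes in the interior of the base, so the nodal slide step you invoke has nothing to act on. Consequently there is no need for an $\varepsilon$-limit at all: after placing $v_f$ at the origin with its adjacent edges along the coordinate axes, the toric local normal form at the smooth corner identifies the preimage of the whole half-open triangle $P\sm\conv(v_0,v_2)$ --- equivalently, the complement of the symplectic divisor over the side opposite $v_f$ --- directly with the open ellipsoid $E(a,b)$, which is exactly the paper's one-line proof of this proposition. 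What you construct --- slide the nodes into $t^\varepsilon$, extend the normal form over the simply connected node-free region $T^\varepsilon_{v_0,v_f,v_2}$, and assemble the embeddings as $\varepsilon\to 0$ --- is precisely the paper's argument for Proposition \ref{prop:EllipsoidInSmoothing}, where $X(\SP)$ is a smooth almost-toric manifold with nonempty cut content $\SB$: there the interior nodes are genuine, cannot be pushed all the way to the boundary edge, and the $\varepsilon$-neighborhood argument is essential. Your proof therefore establishes both propositions at once, but for Proposition \ref{prop:EllipsoidInTriangle} alone it overshoots the target.
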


See \cite{LMS13} for the notion of a full symplectic embedding. Proposition \ref{prop:EllipsoidInTriangle} follows by noticing that the complement of the symplectic divisor associated to the side in $P$ non-incident to $v_f$ is the symplectic domain $E(a,b)$ \cite{Toric1,Hutchings3,Schlenk1}. In particular, let $\{(\SP_n,v_n)\}_{n\in\N}$ be a Symington sequence and denote by $(a_n,b_n)\in\Z^2$ the affine lengths of the sides of $P_n$ incident to $v_f$. Then there exist a sequence of symplectic ellipsoid embeddings
$$i_n:E(a_n,b_n)\lr X(P_n).$$
This is at the core of the relation between the existence of infinite staircases and polytope mutations. It is a powerful starting point, but it not enough to conclude Theorem \ref{thm:main} since the symplectic varieties $X(P_n)$ are not isomorphic. As emphasized, $X(P_n)$ are typically singular algebraic varieties, and their algebraic isomorphism type strongly depends on $n\in\N$. That said, the fundamental defining property (S1) of a Symington sequence states that $X(\SP_n)=X(\SP)$ for all $n\in\N$.

The difference between the equalities $X(\SP_n)=X(\SP)$, for all $n\in\N$, and the sequence of algebraic varieties $X(P_n)$ is contained in the cut content $\SB_n$ in $\SP_n=(P_n,\SB_n)$. In particular, Proposition \ref{prop:EllipsoidInTriangle} implies the following:

\begin{prop}\label{prop:EllipsoidInSmoothing} Let $P\sse\R^2$ be a triangular polytope with a smooth toric vertex $v_f\in V(P)$, and let $(a,b)\in\Z^2$ be the affine length of the two sides of $P$ incident to $v_f$. Suppose that $\SP=(P,\SB)$ has empty cut content at $v_f$. Then there exists a full symplectic embedding $i:E(a,b)\lr X(\SP)$.\hfill$\Box$
\end{prop}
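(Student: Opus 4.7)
The plan is to reduce to Proposition \ref{prop:EllipsoidInTriangle} by deploying Symington's nodal slides to clear every node out of a neighborhood of $v_f$. Let $v_0, v_2$ denote the other two vertices of the triangular polytope $P$. Since, by hypothesis, the cut content of $\SP$ at $v_f$ is empty, every cut of $\SP$ emanates from either $v_0$ or $v_2$. The nodal slide operation reviewed in Subsection \ref{ssec:PolytopeMutation} allows each such node to be moved arbitrarily along its cut without altering the symplectomorphism type of $X(\SP)$.

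Concretely, for every $\varepsilon > 0$ I would produce an almost-toric base $\SP_\varepsilon = (P, \SB_\varepsilon)$ with $X(\SP_\varepsilon) \cong X(\SP)$, in which every node has been slid into the $\varepsilon$-neighborhood $t^\varepsilon$ of the side $\conv(v_0, v_2)$ opposite to $v_f$. The open $\varepsilon$-triangle $T^\varepsilon_{v_0, v_f, v_2}$ then contains no interior critical values of the almost-toric fibration $\m_\varepsilon: X(\SP_\varepsilon) \lr P$, and the restriction of $\m_\varepsilon$ to $\m_\varepsilon^{-1}(T^\varepsilon_{v_0, v_f, v_2})$ is a genuine Lagrangian torus fibration with toric-type degenerations only at $v_f$ and along the two edges of $P$ incident to $v_f$.

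Next, I would invoke an action-angle uniqueness argument: since the base $T^\varepsilon_{v_0, v_f, v_2}$ is contractible and carries the standard integral affine structure (no interior node singularities remain inside it), and since the boundary behavior at $v_f$ is the standard Delzant model of a smooth toric vertex, the preimage $\m_\varepsilon^{-1}(T^\varepsilon_{v_0, v_f, v_2}) \sse X(\SP)$ is symplectomorphic to the analogous preimage $\m_P^{-1}(T^\varepsilon_{v_0, v_f, v_2}) \sse X(P)$ inside the purely toric variety $X(P)$. Combining this with Proposition \ref{prop:EllipsoidInTriangle} applied to $X(P)$, whose volume-filling embedding $E(a,b) \hookrightarrow X(P)$ is realized by the complement of the symplectic divisor over $\conv(v_0, v_2)$, transfers an open symplectic embedding of a subdomain $E(a - \delta_\varepsilon, b - \delta_\varepsilon) \sse E(a,b)$ into $X(\SP)$, with $\delta_\varepsilon \to 0$ as $\varepsilon \to 0$. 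Arranging these embeddings as a compatible nested family as $\varepsilon \to 0$ exhausts $E(a,b)$ up to a set of zero volume and yields the asserted volume-filling symplectic embedding $E(a,b) \lr X(\SP)$.

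The hard part will be the action-angle identification in the third paragraph. Locally, around any regular torus fiber the identification is automatic from Arnold--Liouville, and near $v_f$ it follows from Delzant's local normal form. The nontrivial step is globalizing these local symplectomorphisms over the contractible $\varepsilon$-triangle, which amounts to matching the Lagrangian sections used to trivialize the fibration on both sides. This should be tractable because the base $T^\varepsilon_{v_0, v_f, v_2}$ is contractible, its affine structure is standard, and there is no interior monodromy left after the nodal slides; once the matching along a small arc near $v_f$ is fixed by the Delzant model, it propagates over the whole contractible region. The remainder of the argument is bookkeeping of moment-map images and of the $\delta_\varepsilon$ arising from the nodal slide parameters.
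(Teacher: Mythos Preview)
Your proposal is correct and follows essentially the same route as the paper: slide all nodes into an $\varepsilon$-neighborhood $N$ of the opposite side, observe that the preimage of the complementary triangle is an ellipsoid $E(a-\varepsilon,b-\varepsilon)$, and let $\varepsilon\to 0$. The only difference is emphasis: you flag the action-angle identification over the node-free triangle as the ``hard part,'' whereas the paper simply asserts that this preimage is $E(a-\varepsilon,b-\varepsilon)$, treating it as a standard consequence of Symington's work \cite{Symington} (a simply connected region of an ATBD containing no nodes is symplectomorphic to the corresponding toric model determined by its integral affine structure and Delzant boundary data).
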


Note that introducing the cut content $\SB$ can be achieved by choosing arbitrarily short cuts at the vertices of
$P$. In this manner, we can choose $N\sse P$ to be a neighborhood of the side in $P$ opposite to $v_f$ containing the
cuts in $\SB$, so that the complement in $X(\SP)$ of the pre-image of $N\sse P$ is the symplectic ellipsoid
$E(a-\varepsilon,b-\varepsilon)$ for any $\varepsilon\in\R^{>0}$, depending on the choice of $N$; since $\epsilon > 0$ can be chosen arbitrarily small, the claim follows by taking into account Remark \ref{rmk:Volume_filling}. \color{black} 

Given a Symington sequence
$\{(\SP_n,v_n)\}_{n\in\N}$ for $X(\SP,\SS)$, Proposition \ref{prop:ExistenceSymingtonSequence} and Proposition
\ref{prop:EllipsoidInSmoothing} allow us to construct a sequence of full symplectic ellipsoid embeddings
$$i_n:E(a_n,b_n)\lr X(\SP).$$

The challenge is now upgrading these absolute ellipsoid embeddings
$(i_n)_{n\in\N}$ to a sequence of relative full symplectic ellipsoid embeddings $$\iota_n:E(a_n,b_n)\lr
X(\SP,\SS).$$ This would conclude the proof of Theorem \ref{thm:main} once we study the arithmetic properties
of the Symington sequence $(a_n,b_n)$ associated to $X(\SP,\SS)$ as in Proposition
\ref{prop:ExistenceSymingtonSequence}. The focus of Subsection \ref{ssec:proofmain} and Section
\ref{sec:Symp_Trop} is the construction of the relative embeddings $\iota_n$ into $X(\SP,\SS)$ from the
absolute embeddings $i_n$ into $X(\SP)$.

The central difficulty in showing the existence of $\iota_n$ is understanding the surface configuration $D(\SS)\sse X(\SP)$ under the identifications $X(\SP)\cong X(\SP_n)$, for each $n\in\N$. In order to achieve that, we will develop a diagrammatic tropical calculus for surface configurations $D(\SS)\sse X(\SP)$ in almost toric symplectic manifolds. In particular, such a tropical calculus starts with a configuration $D(\SS)\sse X(\SP_n)$ and describes a diagram $\Delta_n\sse\SP_n$ such that the Hamiltonian isotopy class of the configuration $D(\SS)$ admits a representative contained in the pre-image of this diagram $\Delta_n$.

\begin{remark}
	There is an alternative course of action to construct $\iota_n$. It should be possible to understand how the surface configuration $D(\SS)$ is explicitly carried along a polytope mutation through almost toric diagrams \cite{Symington,Vianna3}. Starting with $D(\SS)\sse X(\SP)$, this would yield an understanding of the inclusions $D(\SS)\sse X(\SP_n)$, upon identifying $X(\SP_n)\cong X(\SP)$ along the mutation sequence. This is an interesting line of research, but we shall not discuss it in the present manuscript.\hfill$\Box$
\end{remark}

Let us now provide a detailed and self-contained account of the numerics appearing in Symington sequences for $X(\SP,\SS)$ for $(\SP,\SS)\in\SH$.

%After that, we proceed with Subsection \ref{ssec:SymplecticAlmostToric}, which states the main result form Section \ref{sec:Symp_Trop}, and Subsection \ref{ssec:proofmain}, which constructs $(\iota_n)_{n\in\N}$, leading to the conclusion of Theorem \ref{thm:main}.

%%%%%%%%%%%%%%%%%%%%%%%%%%%%%%%%%%%%%%%%%%%%%%%%%%%%%%%%%%%%%%%%%%%%%%
%%%%%%%%%%%%%%%%%%%%%%%%%%%%%%%%%%%%%%%%%%%%%%%%%%%%%%%%%%%%%%%%%%%%%%

\subsection{Arithmetic of Symington Sequences for $(\SP,\SS)\in\SH$}\label{ssec:ArithmeticSymington} The polytope $P\in\SP$ in an almost toric base contains all the arithmetic information for its mutations. The useful property of polytopes in Figure \ref{fig:IntroTable} is the following:

\begin{lemma}
Each of the polytopes in Figure \ref{fig:IntroTable} is mutation equivalent to a triangular polytope.
\end{lemma}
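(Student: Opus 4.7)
The plan is to verify the claim case by case, organizing the nine polytopes in Figure \ref{fig:IntroTable} by the underlying closed symplectic $4$-manifold $X(\SP)$. There are four categories: those with $X(\SP) \cong \C\P^2$, those with $X(\SP) \cong \C\P^1 \times \C\P^1$, those with $X(\SP) \cong Bl_3(\C\P^2)$, and those with $X(\SP) \cong Bl_4(\C\P^2)$. The $\C\P^2$ cases are already triangular and require no argument.

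For the $\C\P^1 \times \C\P^1$ cases, the polytope is a rectangle, possibly carrying cut content associated with Lagrangian $2$-spheres. Here I would first perform a nodal trade at one vertex (introducing a node with direction $c_i$ equal to minus the sum of the two adjacent primitive edge vectors), and then apply a single mutation at that vertex in the sense of Definition \ref{dfn:mutation}. The associated shear $M_i \in \GL(2,\Z)$ collapses two consecutive edges of the rectangle into one, so that the resulting polytope is a triangle; the smooth toric vertex $v_f$ opposite to the mutated one is preserved.

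For the $Bl_3(\C\P^2)$ and $Bl_4(\C\P^2)$ cases, the polytopes are hexagons and pentagons respectively. The plan is to iterate the above procedure, performing a sequence of nodal trades and single mutations to decrease the number of non-frozen vertices step by step, until arriving at the monotone triangular almost-toric presentations of $Bl_3(\C\P^2)$ with three nodes and of $Bl_4(\C\P^2)$ with four nodes. These triangular representatives, and the explicit mutation sequences leading to them, have already been exhibited in \cite{Vi16a}; I would invoke that work directly, modifying the edge-length data to match the specific polytope sizes occurring in Figure \ref{fig:IntroTable}.

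The step I expect to be the main obstacle is the bookkeeping: at each mutation one must verify that the consistency condition $M_i^{n_i}(v_i - v_{i-1}) = v_{i+1}-v_i$ of Definition \ref{def:ATBD} continues to hold and that the rays $\gamma_i$ of the various cuts do not collide inside the polytope, so that Definition \ref{dfn:mutation} applies at the next step. This amounts to a finite number of explicit $\GL(2,\Z)$ verifications, one per vertex per step, and can be carried out by reading off the primitive vectors directly from the almost-toric diagrams in Figure \ref{fig:IntroTable}. Once this routine but case-dependent check is complete, the lemma follows.
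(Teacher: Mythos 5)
Your proof follows essentially the same case-by-case strategy as the paper: observe that the $\C\P^2$ base is already a triangle, reduce the square for $\C\P^1\times\C\P^1$ to the triangle for $\C\P^2(1,1,2)$ by a single nodal trade and mutation, and reduce the $Bl_3(\C\P^2)$ and $Bl_4(\C\P^2)$ diagrams to the triangles for $\C\P^2(1,2,3)$ and $\C\P^2(1,4,5)$ by iterating, citing \cite{Vi16a}. The paper's proof is terser (it simply reads the triangular representatives off Figure \ref{fig:IntroTable} and defers the $Bl_4$ case to \cite{Vi16a}), but the content is the same; note only that not all of the $Bl_3$ and $Bl_4$ diagrams in Figure \ref{fig:IntroTable} are literally hexagons or pentagons (several already appear in triangular form with cuts), which is harmless since those cases are then immediate.
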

\begin{proof}
This is immediate for the first two rows: the toric moment polytope for $\C\P^2$ is triangular, the square toric moment polytope for the monotone $\C\P^1\times\C\P^1$ is mutation equivalent to the triangular moment polytope for $\C\P^2(1,1,2)$, and the hexagonal moment polytope for the monotone $Bl_3(\C\P^2)$ is mutation equivalent to the triangular moment polytope for $\C\P^2(1,2,3)$. These triangular polytopes are depicted in Figure \ref{fig:IntroTable} at the rightmost part for each region corresponding to each symplectic 4-manifold. Finally, the almost toric polytope $Bl_4(\C\P^2)$ can be mutated to $\C\P^2(1,4,5)$ as shown in \cite{Vianna3}.
\end{proof}
%https://projecteuclid.org/download/pdf_1/euclid.nmj/1393855952

This allows us to reduce the arithmetic of Symington sequences for the polytopes in Figure \ref{fig:IntroTable} to those for the weighted projective spaces $\C\P^2(1,1,1),\C\P^2(1,1,2),\C\P^2(1,2,3)$ and $\C\P^2(1,4,5)$. The following result covers all the necessary arithmetic for our Theorem \ref{thm:main}:

%http://magma.maths.usyd.edu.au/~kasprzyk/research/pdf/triangle_mutation.pdf
\begin{proposition}\label{prop:arithmetic} Let $P(\alpha,\beta,\gamma)$ be the toric moment polytope for $\C\P^2(\alpha,\beta,\gamma)$.
\begin{itemize}
	\item[(i)] Suppose that $\alpha$ corresponds to a smoothable singularity
	of $\C\P^2(\alpha,\beta,\gamma)$ and divides $(\beta+\gamma)^2$, with $\alpha\cdot \delta=(\beta+\gamma)^2$. 
	Then the polytope $P(\alpha,\beta,\gamma)$ admits a polytope mutation to the triangle $P(\beta,\gamma,\delta)$.\\
	
	\item[(ii)] ({\it Vieta jumping}) Suppose that $(p,q,r)\in\Z^3$ solves the Diophantine equation
	$$C_0p^2+C_1q^2+C_2r^2=mpqr.$$
	Suppose that the $C_i$ divide $m$, $1\leq i\leq3$. Then either of the three triples $(p,q,mpq/C_2-r)$, $(p,mpr/C_1-q, r)$ and $(mqr/C_0-p,q,r)$
	solves this Diophantine equation.
	
	\hfill$\Box$
\end{itemize}
\end{proposition}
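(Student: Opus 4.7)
The plan is to address the two parts of Proposition \ref{prop:arithmetic} separately, as they are of quite different nature: part (i) is a polytope-level computation about a mutation of the moment triangle of a weighted projective plane, while part (ii) is a purely number-theoretic Vieta jumping argument.

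For part (i), I would begin by fixing a standard presentation of the triangular polytope $P(\alpha,\beta,\gamma)$: place the vertex $v_\alpha$ of weight $\alpha$ at the origin with its two adjacent edges pointing along primitive vectors $e_1$ and $w=(\beta+\gamma)\,e_2+\alpha\cdot(\cdot)\,e_1$ chosen so that the local cone at $v_\alpha$ models the cyclic quotient singularity $\frac{1}{\alpha}(1,q)$ whose $T$-singularity form $\frac{1}{dn^2}(1,dnc-1)$ is guaranteed by the smoothability hypothesis; the key input is $\alpha\mid(\beta+\gamma)^2$, which is exactly the numerical condition forcing the $T$-singularity structure. Using this setup I would compute the cut direction $c_{v_\alpha}\in\Z^2$ pointing into $P(\alpha,\beta,\gamma)$ and the shear matrix $M_{v_\alpha}\in\GL(2,\Z)$ that conjugates the two adjacent edge vectors; then I would apply Definition \ref{dfn:mutation} with the order $k$ equal to the multiplicity $n_{v_\alpha}$ coming from the smoothing. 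The new polytope is obtained by applying $M_{v_\alpha}^{n_{v_\alpha}}$ to the half of $P(\alpha,\beta,\gamma)$ on one side of the cut; an affine-length computation on the resulting three edges then shows that the two edges adjacent to the surviving opposite vertex retain affine lengths $\beta$ and $\gamma$, while the image of the mutated edge has affine length $\delta=(\beta+\gamma)^2/\alpha$. Matching the resulting cone structures at the three vertices with those of $P(\beta,\gamma,\delta)$ concludes (i). The main obstacle here is bookkeeping the precise primitive vectors and verifying that the singularity types at the two non-mutated vertices transform correctly into those coming from the weights $\beta$ and $\gamma$ in $P(\beta,\gamma,\delta)$; I expect to sanity-check this against the examples $(1,1,1)\to(1,1,4)\to(1,4,25)\to(1,25,169)$ already appearing in the manuscript, which correspond to Markov triples and confirm the formula $\delta=(\beta+\gamma)^2/\alpha$.

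For part (ii), the plan is a one-line Vieta jump, writing it out explicitly for the third case and invoking symmetry for the other two. Fix $(p,q)\in\Z^2$ and regard the Diophantine equation as a monic-up-to-$C_2$ quadratic in $r$,
\[
C_2\,r^2 \;-\; m p q\,r \;+\; (C_0 p^2 + C_1 q^2) \;=\; 0.
\]
If $r\in\Z$ is one root, then Vieta's relations identify the other root as $r'=mpq/C_2-r$ and the product as $r\,r'=(C_0 p^2+C_1 q^2)/C_2$. The hypothesis $C_2\mid m$ makes $mpq/C_2\in\Z$, so $r'\in\Z$, and it automatically satisfies the original equation by construction. Applying the same argument to the quadratics in $q$ (using $C_1\mid m$) and in $p$ (using $C_0\mid m$) yields the triples $(p,\,mpr/C_1-q,\,r)$ and $(mqr/C_0-p,\,q,\,r)$. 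There is essentially no obstacle in (ii) beyond recording these three symmetric computations; the only point to emphasize is that the divisibility hypotheses $C_i\mid m$ are exactly what is needed to keep the jumped coordinate integral.
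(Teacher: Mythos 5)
The paper does not actually supply a proof of Proposition~\ref{prop:arithmetic}: the remark that follows it explicitly defers the argument to external sources, namely \cite{PolyMut2} (for general toric orbifolds with triangular moment polytope and smoothable corner) and \cite[Lemma~4.2]{Vi16a} (for the Del Pezzo case). So there is no in-paper proof to compare your proposal against; you are supplying what the paper outsources.

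For part (ii), your argument is correct and complete as written: viewing the Diophantine equation as the monic-up-to-$C_2$ quadratic $C_2 r^2 - mpq\,r + (C_0p^2 + C_1q^2)=0$ and reading off the second root $r' = mpq/C_2 - r$ from Vieta's sum is exactly the standard jump, and $C_2\mid m$ is precisely what keeps $r'$ integral. The other two triples follow by symmetry, as you say. This is surely the argument the cited sources also use.

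For part (i), your plan -- fix a normalized presentation of $P(\alpha,\beta,\gamma)$, read off the cut direction and shear $M_{v_\alpha}$ from the $T$-singularity structure at $v_\alpha$, apply Definition~\ref{dfn:mutation} with full order $n_{v_\alpha}$, and identify the result as $P(\beta,\gamma,\delta)$ -- is the right shape of argument. One caution about terminology, though: what actually characterizes $P(\beta,\gamma,\delta)$ is the set of determinants (orders of the cyclic quotient singularities) at the three vertices, not the affine lengths of the three edges; the weights $\beta$ and $\gamma$ sit at the two surviving vertices because the shear $M_{v_\alpha}^{n_{v_\alpha}}\in\GL(2,\Z)$ fixes one half of the polytope pointwise and acts by a lattice automorphism on the other, hence preserves the local cone types there. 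The new content is the determinant at the freshly created vertex $\tilde v$ on the old opposite edge, and the computation you should set up is the determinant of the two edge vectors meeting at $\tilde v$, which equals $(\beta+\gamma)^2/\alpha=\delta$; this is where the divisibility hypothesis $\alpha\mid(\beta+\gamma)^2$ enters, and the $T$-singularity hypothesis is what guarantees the cut content at $v_\alpha$ exists so that a full mutation is defined at all. With that bookkeeping clarified, the sanity check against the Fibonacci chain $(1,1,1)\to(1,1,4)\to(1,4,25)\to\cdots$ that you mention is exactly the right check.
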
	

\begin{remark}
A toric orbifold singularity being smoothable is equivalent to being a $T$-singularity \cite{PolyMut2,PolyMut3}. If $\C\P^2(\alpha,\beta,\gamma)$ is smoothable, then it is of the form
$\C\P^2(C_0p^2,C_1q^2,C_2r^2)$, where $(p,q,r)\in\Z^3$ solves the Diophantine
equation above. Vieta jumping is then equivalent to the change $\alpha \leftrightarrow \delta$ in 
Proposition \ref{prop:arithmetic}(i), with $\delta = C_0(mqr/C_0-p)$. Note that Proposition \ref{prop:arithmetic} is proven in \cite{PolyMut2} for more general toric orbifolds with triangular moment map and smoothable corner, and it appears in the context of Del Pezzo surfaces in \cite{Vianna3} as Lemma~4.2.\hfill$\Box$
\end{remark}

{\bf Arithmetic for $(\C\P^2,\omega_\st)$}. The toric moment polytope for $(\C\P^2,\omega_\st)$ is mutation equivalent to the toric moment polytope for $\C\P^2(p^2,q^2,r^2)$ if and only if $(p,q,r)\in\Z^3$ is a Markov triple, i.e.
$$p^2+q^2+r^2=3pqr.$$
This explains the numerics in Example \ref{ex:NonEllipsoidMutation}. An
arbitrary sequence of Markov triples, even if they differ only in one component,
does {\it not} yield a Symington Sequence, since condition (S4) in Definition
\ref{def:SymingtonSeq} imposes a non-trivial constraint. Indeed,
$\C\P^2(p^2,q^2,r^2)$ has a smooth toric vertex if and only if one of the
numbers $p,q,r\in\Z$ equals one. Thus a Symington Sequence for
$(\C\P^2,\omega_\st)$ can be obtained using the construction in the proof of
Proposition \ref{prop:ExistenceSymingtonSequence} yielding any of the toric
polytopes for $\C\P^2(1,q^2,r^2)$, if $(q,r)\in\Z^2$ satisfy 
$$1+q^2+r^2=3qr.$$
Let $F_n$ be the $n$th odd-index Fibonacci number, starting at $F_1=1$. The recursion
$F_{n+2}=3F_{n+1}-F_n$ implies that $(1,F_n,F_{n+1})$ is a Markov triple with
$p=1$, as required. Hence, a Symington Sequence for $(\C\P^2,\omega_\st)$ yields
the sequence of toric polytopes $\C\P^2(1,F^2_n,F^2_{n+1})$ for all $n\in\N$.
Conversely, all positive integral solutions $(1,q,r)$ of the Markov equation are
of the form $(1,F_n,F_{n+1})$. These Fibonacci solutions are directly obtained
by iteratively applying the Vieta jumping in Proposition \ref{prop:arithmetic}
starting with the minimal solution $(p,q,r)=(1,1,1)$. The 
ratio $F_{n+1}/F_{n}$ is known to converge to $1 + \varphi$, where $\varphi$ is the golden ratio. This is readily extracted from $1+q_n^2+r_n^2=3q_nr_n$, by dividing by $r_n^2$ and 
noticing that in the limit $1/r_n^2 \to 0$ as $n\to \infty$, since $1+\varphi$ is the root
of $1 -3x + x^2$, that is grater than $1$.

This is the explanation for the appearance of the odd Fibonacci numbers in the McDuff-Schlenk Fibonacci staircase from the viewpoint of polytope mutations. Indeed,
this computation shows that the $E(1,F^2_{n+1}/F^2_n)$ admits a full symplectic
embedding into $$\left(\C\P^2,\sqrt{\frac{2\Vol(E(1,F^2_{n+1}/F^2_n))}{\pi^2}}\cdot\omega_\st\right),$$
and that
any symplectic ellipsoid embedding obtained via toric mutations must be of this
form. \\

\begin{remark}\label{remark:volume}
 From the proof of Theorem~6.5 in \cite{LeeOhVianna}, see \cite[Figure~15]{LeeOhVianna}, we can
readily conclude that $E(1,F^2_{n+1}/F^2_n)$ embeds
into
$$\left(Bl_k(\C\P^2), \frac{F^2_{n+1}/F^2_n}{3F_{n+1}/F_n -
k} \sqrt{\frac{2\Vol(E(1,F^2_{n+1}/F^2_n))}{\pi^2}}\cdot\omega_\st\right),\quad k \le 5.$$ We learned about staircases in the toric
domains related to $Bl_1(\C\P^2)$ and $Bl_2(\C\P^2)$ from \cite{DCGetAL}.
\hfill$\Box$
\end{remark}

{\bf Arithmetic for $(\C\P^1\times\C\P^1,\omega_\st\oplus\omega_\st)$}. The toric moment polytope for $(\C\P^2(1,1,2),\omega_\st)$ is mutation equivalent to the toric moment polytope for $\C\P^2(p^2,q^2,2r^2)$ if and only if $(p,q,r)\in\Z^3$ satisfies
$$p^2+q^2+2r^2=4pqr.$$
As above, these $\C\P^2(p^2,q^2,2r^2)$ have a smooth toric vertex if and only if one of the numbers $p,q,r\in\Z$ equals one. Thus a Symington Sequence for $(\C\P^1\times\C\P^1,\omega_\st\oplus\omega_\st)$ exists and yields any of the toric polytopes for $\C\P^2(1,q^2,2r^2)$, if $(q,r)\in\Z^2$ satisfy
$$1+q^2+2r^2=4qr.$$
Let $P_n,H_n$ be the $n$th Pell number and the $n$th half-companion Pell number. They are defined by the initial values
$$P_0=0,P_1=1,\quad H_0=H_1=1,$$
and the recurrence relations
$$P_n=2P_{n-1}+P_{n-2} , \quad H_n=2H_{n-1}+H_{n-2}.$$
The triples $(1,q,r) = (1, H_{2n},P_{2n\pm 1})$ verify the above Diophantine equation. Indeed, rewriting the Diophantine equation as $1+2r^2= q(4r - q)$ and $1+q^2= 2r(2q - r)$, we see that 
mutations induce the recurrence relations: 
\[ q_0 = r_0 = 1, \ \ q_{2n+1} = 4r_{2n} - q_{2n}, \ \ 
q_{2n+2} = q_{2n+1}, \ \ r_{2n+1} = r_{2n}, \ \ r_{2n + 2} = 2q_{2n+1} - r_{2n+1}.\] 

Then, one shows by induction
that the Pell numbers satisfy $P_n = H_n - P_{n-1}$, and that the relations $H_{2n+2} = 4P_{2n+1} - H_{2n}$ 
and $P_{2n+1} = 2H_{2n} - P_{2n-1}$ follow from the recurrence relations and $P_n = H_n - P_{n-1}$. 
Therefore, setting $q_{2n} = q_{2n -1} = H_{2n}$ and $r_{2n+1} = r_{2n} = P_{2n+1}$, we conclude that the triples $(1,q_n,r_n)$ form solutions to the above Diophantine equation.

 %These Pell solutions are also obtained by iteratively applying Proposition \ref{prop:arithmetic} starting with $(p,q,r)=(1,1,1)$.

Rescaling the ellipsoid $E(q_n^2,2r_n^2)$ and setting
$$c_{2n -
1}=\max{\{2P_{2n-1}^2/H_{2n}^2,H_{2n}^2/(2P_{2n-1}^2)\}},\quad c_{2n}=\max{\{2P_{2n+1}^2/H_{2n}^2,H_{2n}^2/(2P_{2n+1}^2)\}},$$ we obtain that the symplectic ellipsoid
$E(1,c_n)$ symplectically embeds into
$(\C\P^1\times\C\P^1,\widetilde\Vol(E(1,c_n))\cdot(\omega_\st\oplus\omega_\st))$ for all $n\in\N$, where
$\widetilde\Vol$ denotes the rescaled volume, as in Remark \ref{remark:volume}. This explains the
numerics in the Frenkel-M\"uller Pell staircase via polytope mutations. The limit of $c_n$, for large enough $n\in\N$, can be also
extracted from $1+q_n^2+2r_n^2=4q_nr_n$: dividing by $r_n^2$ and taking the solution of $x^2 -4x + 2 = 0$
that satisfies $x^2/2 > 1$, which is $2 + \sqrt{2}$, one obtains that $c_n \to x^2/2 = 3 + 2\sqrt{2}$.\\

{\bf Arithmetic for $(Bl_3(\C\P^2),\omega_\st)$}. The toric moment polytope for
$(\C\P^2(1,2,3),\omega_\st)$ is mutation equivalent to the toric moment polytope
for $\C\P^2(p^2,2q^2,3r^2)$ if and only if $(p,q,r)\in\Z^3$ satisfies
$$p^2+2q^2+3r^2=6pqr.$$ 
A Symington Sequence for $(Bl_3(\C\P^2),\omega_\st)$
thus yields any of the toric polytopes for $\C\P^2(1,q,r)$, if
$(q,r)\in\Z$ satisfy 
$$1+2q^2+3r^2=6qr.$$ 
This Diophantine equation gives
rise to the numerics of the Cristofaro-Gardiner-Kleinman staircase. Indeed,
Vieta jumping alternately applied to the second and third components of
$(p,q,r)=(1,1,1)$ yields the sequence of triples
$$(1,1,1)\longmapsto(1,2,1)\longmapsto(1,2,3)\longmapsto(1,7,3)\longmapsto\ldots$$
which prove the existence of full ellipsoid embeddings into the appropriately
rescaled $(Bl_3(\C\P^2),\omega_\st)$, starting with the corresponding sequence
$$E(1,3/2),E(1,8/3),E(1,27/8),E(1,98/27),\ldots,$$ associated to the triples
above. The sequence of ellipsoids $E(1,k_n)$, where $\{k_n\}_{n\in\N}$ 
is associated to ratios of solutions for the
Diophantine equation obtained by Proposition \ref{prop:arithmetic} is a
convergent infinite sequence, limiting to $2x^2/3 = 2 + \sqrt{3}$,
where $ x = \frac{3+\sqrt{3}}{2}$ is the solution of
$2x^2 -6x + 3 =0$ with $2x^2/3 = 2 + \sqrt{3} > 1$, which is obtained from dividing $1+2q_n^2+3r_n^2=6q_nr_n$ by $r_n^2$
and taking the limit $n \to \infty$. \\

{\bf Arithmetic for $(Bl_4(\C\P^2),\omega_\st)$}. The toric moment polytope for
$(\C\P^2(1,4,5),\omega_\st)$ mutates to the toric moment polytope for
$\C\P^2(p^2,q^2,5r^2)$ if and only if $(p,q,r)\in\Z^3$ satisfies
$$p^2+q^2+5r^2=5pqr.$$ In conclusion, a Symington Sequence for
$(Bl_4(\C\P^2),\omega_\st)$ yields any of the toric polytopes for
$\C\P^2(1,q^2,5r^2)$, if $(q,r)\in\Z^2$ satisfy 
$$1+q^2+5r^2=5qr.$$ The
full ellipsoid embeddings come from the sequence of triples
$$(1,2,1)\longmapsto(1,3,1)\longmapsto(1,3,2)\longmapsto(1,7,2)\longmapsto(1,7,5)\longmapsto\ldots,$$
which yields the sequence of optimal embeddings of $E(1,5/4),E(1,9/5),E(1,20/9),E(1,49/20),E(1,125/49)\ldots$ into a rescaling of $(Bl_4(\C\P^2),\omega_\st)$. This is a new full ellipsoid
staircase $E(1,l_n)$, with sequence $\{l_n\}_{n\in\N}$ of ratios of solutions converging to 
$1+\varphi$, where $\varphi$ is again the golden ratio. Indeed, $1+\varphi= x^2/5$, where
$$x = \frac{5 + \sqrt{5}}{2},$$
is the solution of 
$x^2 - 5x + 5 = 0$, with $x^2/5 > 1$, obtained from dividing $1+q_n^2+5r_n^2=5q_nr_n$
by $r_n^2$ and taking $n \to \infty$.\\

%%%%%%%%%%%%%%%%%%%%%%%%%%%%%%%%%%%%%%%%%%%%%%%%%%%%%%%%%%%%%%%%%%%%%%
%%%%%%%%%%%%%%%%%%%%%%%%%%%%%%%%%%%%%%%%%%%%%%%%%%%%%%%%%%%%%%%%%%%%%%

\subsection{Proof of Theorem \ref{thm:main}}\label{ssec:proofmain}

Let us now prove Theorem \ref{thm:main}, building on the Symington sequences discussed in Subsections \ref{ssec:SymingtonSeq} and \ref{ssec:ArithmeticSymington}. For that, let us first state the result that we need from the theory of symplectic-tropical curves, as developed in Section \ref{sec:Symp_Trop}. The following statement is the only tropical ingredient for our proof of Theorem \ref{thm:main}, its proof will be the content of Section \ref{sec:Symp_Trop}.

\begin{thm}\label{thm:ExistenceTropicalSymplectic}
	Let $X(\SP,\SS)$ be a symplectic toric domain, with $(\SP,\SS)\in\SH$, let $(\SP_n,\SS_n)$ be the associated Symington sequence, and let $$i_n:E(a_n,b_n)\lr X(\SP)$$
	be a full symplectic ellipsoid embedding. For any $\varepsilon\in\R^{>0}$, there exists a tropical symplectic curve $\widetilde{S}_n\sse\SP_n$ such that
	\begin{itemize}
		\item[(i)] The tropical symplectic curve $\widetilde{S}_n\sse\SP_n$ represents any embedded (configuration of) symplectic surface(s) $D(\widetilde{S}_n)\sse X(\SP_n)$ in the same homology class as the (configuration of) embedded symplectic curve(s) in $D(S)$,\\
		
		\item[(ii)] There exists a neighborhood $\Op(\widetilde{S}_n)\sse P_n$ and a volume-filling symplectic embedding
		$$\iota_n:E(a_n,b_n)\lr X(\SP_n)\setminus \Op(\widetilde{S}_n).$$\hfill$\Box$
	\end{itemize}
\end{thm}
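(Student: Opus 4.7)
The plan is to combine the tropical realization theorem (Theorem \ref{thm:main2}) with the placement of the ellipsoid triangle at the frozen vertex $v_f$ of the Symington polytope $P_n$. By property (S4) of Definition \ref{def:SymingtonSeq}, $v_f\in V(P_n)$ is a smooth toric vertex for every $n\in\N$, and the affine lengths of the two sides of $P_n$ adjacent to $v_f$ are exactly $(a_n,b_n)$ by the arithmetic of Subsection \ref{ssec:ArithmeticSymington}. For small $\varepsilon\in\R^+$, the $\varepsilon$-open triangle $T^\varepsilon_{v_0,v_f,v_2}\sse P_n$ of Definition \ref{def:triangle} has preimage under the almost-toric fibration a symplectic ellipsoid $E(a_n-\d,b_n-\d)$, with $\d=\d(\varepsilon)\to 0$ as $\varepsilon\to 0$, as in the proof of Proposition \ref{prop:EllipsoidInSmoothing}.

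The first step is to design a symplectic-tropical curve $\widetilde{S}_n\sse\SP_n$ that lies inside the closed complement $P_n\sm T^\varepsilon_{v_0,v_f,v_2}$. I would take $\widetilde{S}_n$ to consist of tropical edges running arbitrarily close to the sides of $P_n$ not adjacent to $v_f$, producing the symplectic $2$-spheres of $D(S)$, together with tropical segments supported on the cuts of $\SP_n$ emanating from the non-frozen vertices, which produce the Lagrangian $2$-spheres of $L(\SL)$ via the matching-cycle picture of Subsection \ref{ssec:SymplecticAlmostToric}. Applying Theorem \ref{thm:main2} to $\widetilde{S}_n$ yields an embedded configuration $D(\widetilde{S}_n)\sse X(\SP_n)$ of symplectic surfaces contained in an arbitrarily small neighborhood $\Op(\widetilde{S}_n)$ of $\widetilde{S}_n$ in $P_n$. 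Because $\widetilde{S}_n$ has been drawn in $P_n\sm T^\varepsilon_{v_0,v_f,v_2}$, the preimage of this $\varepsilon$-triangle yields a symplectic embedding
\[
\iota_n:E(a_n-\d,b_n-\d)\lr X(\SP_n)\sm D(\widetilde{S}_n),
\]
and letting $\varepsilon\to 0$ gives the volume-filling conclusion of part (ii).

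To prove part (i) I would verify, case by case for each of the nine entries $(\SP,\SS)\in\SH$ in Figure \ref{fig:IntroTable}, that the configuration $D(\widetilde{S}_n)$ constructed above represents the homology classes of the compactifying symplectic spheres in $D(S)$ and realizes the Hamiltonian isotopy classes of the Lagrangian spheres in $L(\SL)$, under the identification $X(\SP_n)\cong X(\SP)$ provided by the Symington sequence. The intersection and self-intersection numbers of each component can be read off directly from the tropical diagram $\widetilde{S}_n$ using the combinatorial calculus developed in Section \ref{sec:Symp_Trop}, which pins down the homology classes. Once the classes match, uniqueness follows from two classical inputs: embedded symplectic $2$-spheres in a fixed homology class of the monotone rational $4$-manifolds $\C\P^2,\C\P^1\x\C\P^1,Bl_3(\C\P^2),Bl_4(\C\P^2)$ are unique up to symplectic, hence Hamiltonian, isotopy by the work of Gromov--McDuff and Lalonde--McDuff \cite{Ruled1,Ruled2,Ruled3}, and Lagrangian $2$-spheres in the monotone $\C\P^1\x\C\P^1,Bl_3(\C\P^2),Bl_4(\C\P^2)$ are classified up to Hamiltonian isotopy by their homology class \cite{LagrangianSphere1,LagSph2,LagSph3,LagSph4}. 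The main obstacle will be the combinatorial bookkeeping: after many mutations, $\widetilde{S}_n$ typically develops internal trivalent vertices and edges that interact non-trivially with the nodes of $\SP_n$, so that the tropical diagram representing $D(S)$ bears little visual resemblance to the original one, and uniformly verifying the homology match for each of the nine entries in Figure \ref{fig:IntroTable} is where the bulk of the technical work resides.
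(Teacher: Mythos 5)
Your skeleton is aligned with the paper's: place the ellipsoid triangle at the frozen vertex $v_f$, build a symplectic-tropical diagram in a neighborhood $\fN$ of the edge of the triangular $P_n$ opposite to $v_f$, invoke Theorem \ref{thm:main2} to produce symplectic surfaces there, and reduce to uniqueness of symplectic/Lagrangian isotopy classes. However, there is a genuine gap in the middle step: Theorem \ref{thm:main2} alone cannot produce the required $\widetilde{S}_n$. For $\C\P^1\times\C\P^1$, $Bl_3(\C\P^2)$ and $Bl_4(\C\P^2)$ the configuration $D(S)$ is a chain of $2$, $4$ or $3$ transversally intersecting symplectic spheres, not a single embedded curve, and Theorem \ref{thm:main2} gives no control over the pairwise geometric intersections of surfaces produced from several tropical diagrams drawn in the same $\fN$. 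Making two such surfaces intersect exactly once transversally, or not at all, is exactly what the linking-dimer machinery of Subsection \ref{subsec:Chains_STC_Prelim} supplies (Propositions \ref{prp:Esc_Cap}, \ref{prp:Link_dimer}, \ref{prp:link+d}), and the paper's Propositions \ref{prp:PxP_H1H2}, \ref{prp:3Blup_A1A2A5}, \ref{prp:4Blup_A1A2A5} rest entirely on it; your plan does not provide any substitute.

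There is a second missing idea. You say the tropical segments along cuts "produce the Lagrangian $2$-spheres of $L(\SL)$", but this is backwards: the Lagrangians are visible matching-cycle Lagrangians already present once the ATBD and its nodes are fixed, independent of the tropical curve. What actually needs to be controlled is the \emph{intersection} of the symplectic curve with these Lagrangian spheres: a leaf of the naive tropical curve arrives at a cut with multiplicity $q$ or $r$ (from Equation \eqref{eq:lm}), and when a cut contains several nodes the resulting surface can pick up intersections with the Lagrangians $S_i$, shifting its homology class away from the target class in $D(S)$. This is precisely the excess observed in Subsection \ref{subsubsec:Neigh_Edge} via Equations \eqref{eq:lm} and \eqref{eq:AntiCan_int}, and Proposition \ref{prp:Disjoint_S_i} is the deformation result that redistributes the arriving multiplicity over the nodes so that the symplectic curve meets only the prescribed Lagrangian spheres, pinned down case by case with congruences such as $q^2\equiv -1 \bmod 5$ for $Bl_4(\C\P^2)$ (proof of Theorem \ref{wSTCs}). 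Without this your part (i) fails for all cases except $\C\P^2$, where $b_2=1$ and anticanonical degree alone fixes the class. So the "combinatorial bookkeeping" you defer to is in fact the construction of new local models beyond Theorem \ref{thm:main2}, not bookkeeping on top of it.
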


The inclusion $\widetilde{S}_n\sse\SP_n$ in Theorem \ref{thm:ExistenceTropicalSymplectic} is to be understood as $\widetilde{S}_n\sse P_n$ for a realization of the cut content $\SB_n$ of $\SP_n$. The exact realization of the cut content $\SB_n$, as cuts in $P_n$, is a choice and, in Theorem \ref{thm:ExistenceTropicalSymplectic}, this choice depends on the initial value of $\varepsilon\in\R^{>0}$. Let us now apply Theorem \ref{thm:ExistenceTropicalSymplectic} and conclude Theorem \ref{thm:main} in each of the cases.

\begin{remark}
The exact configurations of symplectic curves that we shall use in Theorem \ref{thm:ExistenceTropicalSymplectic} are specifically constructed in Proposition \ref{prp:PxP_H1H2} for $\CP^1\times\CP^1$, Proposition \ref{prp:3Blup_A1A2A5} for $\BlIII$ and Proposition \ref{prp:4Blup_A1A2A5} for $\BlIV$.\hfill$\Box$
\end{remark}

{\bf Complex Projective Plane $\C\P^2$}. We start from the triangular toric fibration of $\CP^2$ in Figure 1 with $\SB=\varnothing$. The relative almost toric base $(\SP,\SS)$ has $D(\SS)=\C\P^1$, the complex projective line, as its unique symplectic divisor. This relative almost toric base leads to the symplectic toric domain of the standard symplectic 4-ball $$X(\SP,\SS)=(\bD^4(1),\omega_\st)\cong(\C\P^2\setminus\C\P^1,\omega_\st).$$
Let $c_n=\Vol(E(1,F^2_{n+1}/F^2_n))$, with $\{F_n\}_{n\in\N}$ as in Subsection \ref{ssec:ArithmeticSymington}, and let us show that there exists a full symplectic embedding
$$\iota_n:E(1,F^2_{n+1}/F^2_n)\lr (\bD^4(1),c_n\cdot\omega_\st).$$
The sequence of polytope mutations, as discussed in Subsection \ref{ssec:PolytopeMutation}, associated to the Symington sequence in Subsection \ref{ssec:SymingtonSeq} yields a sequence of full symplectic embeddings
$$i_n:E(1,F^2_{n+1}/F^2_n)\lr (\C\P^2,c_n\cdot\omega_\st).$$
This sequence follows the arithmetic in Subsection \ref{ssec:ArithmeticSymington} stemming from the Markov Equation with $a=1$. In order to guarantee an embedding into $X(\SP,\SS)$, it suffices to show that $D(\SS)=\C\P^1$ can be symplectically isotoped to lie above an arbitrarily small $\varepsilon$-neighborhood of the side opposite to the frozen vertex $v_f\in P_n$, $n\in\N$. This is achieved by first constructing a symplectic $\C\P^1$ above this $\varepsilon$-neighborhood, and then arguing that this symplectic $\C\P^1$ can be symplectically isotoped to the standard $D(\SS)=\C\P^1\sse X(\SP)$. The former part is achieved by Theorem \ref{thm:ExistenceTropicalSymplectic}, the symplectic-tropical curves of which are depicted explicitly in Figure \ref{fig:SympTropMutationP2}, for the first five mutations in the Fibonacci-Symington Sequence. The latter part, constructing a symplectic isotopy from the lift of the symplectic-tropical curve to the standard complex line $D(\SS)$, is now achieved by using M. Gromov's \cite[Section 0.2.B]{Gromov85}, which shows that the symplectic isotopy class of the complex line is unique.

\begin{center}
	\begin{figure}[h!]
		\centering
		\includegraphics[scale=0.8]{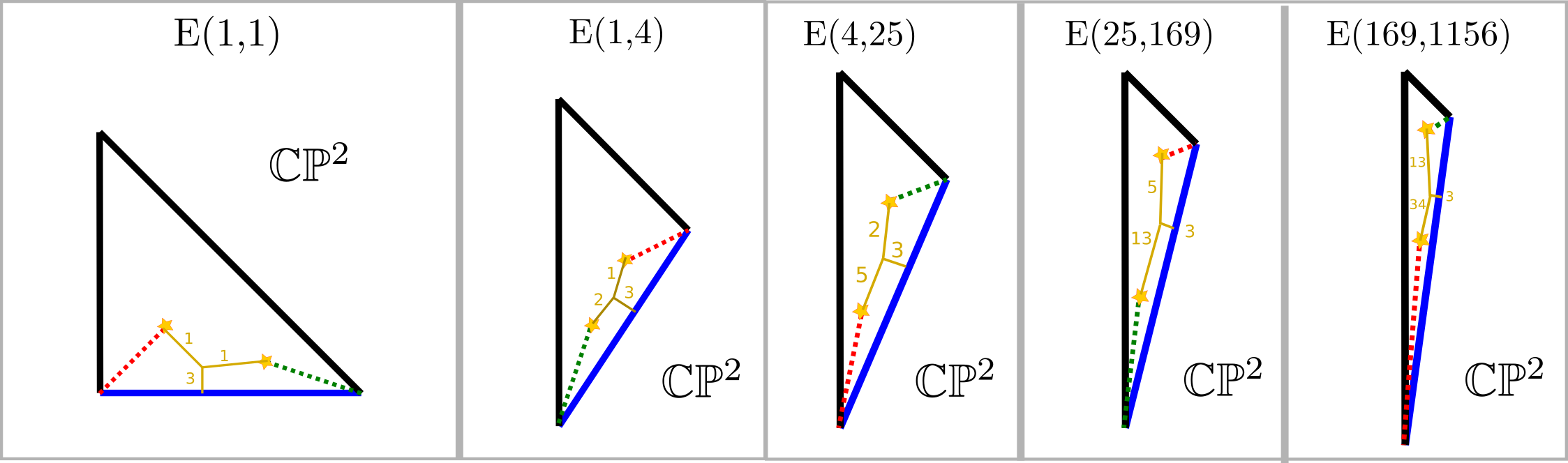}
		\caption{Symplectic-tropical curves for $(\C\P^2,\omega_\st)$. The weights of the tripods are, from left to right, $(1,1,3)$ for the toric moment polytope, $(1,2,3),(5,2,3),(13,5,3)$ and $(34,13,3)$. These five tropical symplectic curves represent symplectic embedded lines $\C\P^1\sse(\C\P^2,\omega_\st)$.}
		\label{fig:SympTropMutationP2}
	\end{figure}
\end{center}

{\bf The Ruled Surface $(\C\P^1\times\C\P^1,\omega_\st\oplus\omega_\st)$}. There are two (relative) almost toric bases $(\SP,\SS)$ in Figure \ref{fig:IntroTable}, associated to the symplectic domains $\bD^2(1)\times\bD^2(1)$, the polydisk, and the ellipsoid $E(1,2)$.

The first almost toric base $(\SP,\SS)$, on the right, has underlying polytope $P\in\SP$ the toric moment polytope for the weighted projective plane $\C\P^2(1,1,2)$. The cut content $\SB\in\SP$ is non-trivial at one of the vertices, where a Lagrangian 2-sphere $L(\SB)$ lies above the segment uniting the two nodal points. This Lagrangian 2-sphere is in the homology class of the Lagrangian anti-diagonal in $X(\SP)\cong(\C\P^1\times\C\P^1,\omega_\st\oplus\omega_\st)$, and it is in fact Hamiltonian isotopic to the anti-diagonal. The surface configuration $D(\SS)$ consists of two surfaces: a symplectic 2-sphere, in the homology class of the complex line $\{pt\}\times\C\P^1$, and the Lagrangian 2-sphere $L(\SB)$. The sequence of polytope mutations constructed in Subsection \ref{ssec:SymingtonSeq} yields a sequence of embeddings
$$i_n:E(1,p_n)\lr (\C\P^2(1,1,2),\Vol(E(1,p_n))\cdot(\omega_\st\oplus\omega_\st)),$$
where $p_n=\max{\{2P_n^2/H_{n+1}^2,H_{n+1}^2/(2P_n^2)\}}$, according to the arithmetics discussed in Subsection \ref{ssec:ArithmeticSymington}, and we are denoting $\Vol=\widetilde\Vol$ onwards to ease notation.

In order for these embeddings to be into $X(\SP)=E(1,2)$, instead of $X(P)=\C\P^2(1,1,2)$, we first apply Theorem \ref{thm:ExistenceTropicalSymplectic} and construct a symplectic-tropical curve inside an $\varepsilon$-neighborhood of the edge opposite to the frozen vertex, which includes the cuts in $\SB$. This tropical curve $\widetilde{S}_n$ is chosen such that $D(\widetilde{S}_n)$ is a symplectic 2-spheres in the homology class $[\{pt\}\times\C\P^1]$. By \cite[Section 2.4.C]{Gromov85}, the isotopy class of $\{pt\}\times\C\P^1\sse \C\P^1\times\C\P^1$, through embedded symplectic surfaces, is unique. Thus, there exists a symplectic isotopy from $D(\widetilde{S}_n)$ to $\{pt\}\times\C\P^1$. The image of the Lagrangian sphere $L(\SB_n)$ under this symplectic isotopy might not coincide with $L(\SB)$. Nevertheless, since $L(\SB_n)$ and $L(\SB)$ are both Lagrangian 2-spheres in $\C\P^1\times\C\P^1$ in the class of the anti-diagonal, there exists a Lagrangian isotopy connecting them, relative to the divisor $\{pt\}\times\C\P^1$, as shown in \cite{LagrangianSphere1}. Since an exact Lagrangian isotopy can be realized by a Hamiltonian isotopy, this yields a symplectic isotopy from our surface configuration $D(\widetilde{S}_n)$ and $L(\SB_n)$ to the standard $D(S)$ and $L(\SB)$. This constructs the full symplectic embeddings
$$i_n:E(1,p_n)\lr (E(1,2),\Vol(E(1,p_n))\cdot\omega_{\st}).$$

The second almost toric base, on the left, corresponds to the toric moment polytope for $\C\P^1\times\C\P^1$ and has $\SB=\varnothing$. This data has the polydisk $X(\SP)=\bD^2(1)\times\bD^2(1)$ as its associated domain. Indeed, the surface configuration $D(\SS)$ for this first $(\SP,\SS)$, depicted in blue, consists of a chain of two symplectic embedded 2-spheres, corresponding to $H_1=\C\P^1\times\{pt\}$ and $H_2=\{pt\}\times\C\P^1$ respectively. Thus the divisor $D=H_1+H_2$ is defined as $(\C\P^1\times\{pt\})\cup(\{pt\}\times\C\P^1)\sse X$, which represents the element $(1,1)\in H^2(\C\P^1\times\C\P^1,\Z)\cong\Z^2$. At this stage, we can directly invoke \cite[Corollary 1.6]{PellStaircase}, which states that a symplectic ellipsoid embeds fully in $E(1,2)$ if and only if it embeds fully in $\bD^2(1)\times\bD^2(1)$. This concludes Theorem \ref{thm:main} for these two almost toric bases.\\

{\bf Thrice Blown-up $Bl_3(\C\P^2)$}. There are four relative almost toric basis $(\SP,\SS)$ with $X(\SP)\cong
Bl_3(\C\P^2)$ symplectomorphic to the blow up of $(\C\P^2,\omega_\st)$ at three non-collinear points. In line
with the previous case of symplectic domains in $\C\P^1\times\C\P^1$, there exists an infinite sharp sequence
of ellipsoids for one of these four symplectic domains $X(\SP,\SS)$ if and only if it exists for one of them. See for instance \cite[Theorem 1.2]{CG19}, as an ellipsoid is a concave (and convex) domain (see also \cite{Hutchings1,ECHnotes}). The numerics of these staircases
will also coincide. Let us focus on the leftmost relative almost toric base $(\SP,\SS)$ in Figure
\ref{fig:IntroTable}. In this case, $\SS$ is a unique symplectic configuration realized by a linear plumbing
of four symplectic 2-spheres each with self-intersection $(-1)$, and representing the classes $E_1$, $H - E_1
- E_3$, $H - E_1 - E_2$ and $E_2$.

Theorem \ref{thm:ExistenceTropicalSymplectic} applied to this singular configuration -- see Proposition \ref{prp:3Blup_A1A2A5} -- yields a tropical curve in an $\varepsilon$-neighborhood of the side opposite to the frozen vertex $(0,0)$. Then we need to prove that any such symplectic configuration $S_1,S_2,S_3,S_4\sse Bl_3(\C\P^2)$ is equisingularly symplectic isotopic to a given such configuration $D$ in the same homology class. For that, blow down the second and fourth spheres $S_2,S_4\sse Bl_3(\C\P^2)$. The blow down of $S_1$ becomes a $0$-self-intersection 2-sphere, and the blow down of $S_3$ is a $(+1)$-self-intersection 2-sphere in $Bl_1(\C\P^2)$. The classification of ruled symplectic surfaces \cite{Ruled3,Ruled1} gives a symplectomorphism of $Bl_1(\C\P^2)$ that sends the symplectic configuration $S_1\cup S_3$ to the union of the proper transform of the complex line $\C\P^1\sse\C\P^2$ and the $0$-self-intersection fiber of the linear pencil of $\C\P^2$. Since the symplectomorphism group of $Bl_1(\C\P^2)$ is connected \cite{LagSph4}, this can be achieved via a symplectic isotopy. This same symplectic isotopy, upon blowing up twice, yields the required symplectic isotopy.

\begin{remark}\label{rmk:Bl3Divisors}
The argument above requires the construction of the 4-chain of symplectic spheres, provided by Theorem \ref{thm:ExistenceTropicalSymplectic}. Nevertheless, it is possible to instead argue with the rightmost relative almost toric base in Figure \ref{fig:IntroTable}. This alternative argument requires an understanding of the Lagrangian spheres in $Bl_3(\C\P^2)$ -- which fortunately exists -- and reads as follows.

The rightmost relative almost toric base yields the symplectic ellipsoid $E(2,3)\cong X(\SP,\SS)$, with the unique symplectic 2-sphere in $D(\SS)$ in the homology class of an exceptional divisor $E\sse Bl_3(\C\P^2)$. Consider the Symington sequence $\SP_n$, constructed in Subsection \ref{ssec:ArithmeticSymington}, associated to $\C\P^2(1,2,3)$, which yields a sequence of full ellipsoid embeddings
$$i_n:E(1,k_n)\lr (Bl_3(\C\P^2),\Vol(E(1,k_n))\cdot\omega_\st).$$
Theorem \ref{thm:ExistenceTropicalSymplectic} now constructs an almost-tropical curve $\widetilde{S}_n\sse P_n$ such that $D(\widetilde{S}_n)$ is an embedded symplectic 2-sphere in the homology class $[E]$. At this stage we proceed as before, by constructing a symplectic isotopy from $D(\widetilde{S}_n)$ to the original exceptional divisor $E\sse Bl_3(\C\P^2)$, which is itself given by $D(S)$, where $S$ is the side of the toric moment polytope of $\C\P^2(1,2,3)$ opposite to the frozen smooth vertex. The symplectic isotopy embeds the complement of $D(\widetilde{S}_n)$ into the complement of $E$, by upgrading it to an ambient symplectic isotopy, which is possible since the symplectic 2-spheres are embedded. As in the case of $E(1,2)$, the symplectic ellipsoid $E(2,3)=X(\SP,\SS)$ arises as the complement of $E$ and three Lagrangian 2-spheres; this is the geometric incarnation of $\SS$ containing surfaces above the cuts. Similar to the case of $\C\P^1\times\C\P^1$, the Hamiltonian isotopy classes of these Lagrangian 2-spheres in $Bl_3(\C\P^2)$ are known to be unique \cite{LagSph3,LagSph2} and the Lagrangian isotopy can be taken to be in the complement of a stable symplectic sphere configuration \cite{LagSph4}. The composition of our symplectic isotopy with a compactly supported Hamiltonian isotopy, bringing these three Lagrangian 2-spheres to our standard configuration in $X(\SP,\SS)$, yields a full symplectic embedding
$$\iota_n:E(1,k_n)\lr (E(2,3),\Vol(E(1,k_n))\cdot\omega_\st).$$\hfill$\Box$
\end{remark}
%https://conservancy.umn.edu/bitstream/handle/11299/135850/Wu_umn_0130E_12939.pdf?sequence=1&isAllowed=y
{\bf Four Times Blown-up $Bl_4(\C\P^2)$}. The argument follows the same pattern as the previous three cases. Consider generators $\langle H,E_1,E_2,E_3,E_4\rangle\in H^2(Bl_4(\C\P^2),\Z)$ given by the proper transform of the complex line in $\C\P^2$, away from the blow up points, and $E_i$ the exceptional divisors, $1\leq i\leq4$. Let us analyze its leftmost relative almost toric base $(\SP,\SS)$ in Figure \ref{fig:IntroTable}, with $D(\SS)$ a surface configuration consisting of a symplectic linear chain $C$ of three self-intersection-($-1$) spheres, in the ordered homology classes $[E_1],[H]-[E_1]-[E_4],[E_4]$, and two Lagrangian spheres in the homology classes $[E_3]-[E_1]$ and $[E_4]-[E_2]$. In the same scheme as before, the Symington sequence $\SP_n$ associated to $Bl_4(\C\P^2)$, with arithmetic as in Subsection \ref{ssec:ArithmeticSymington}, yields full symplectic embeddings
$$i_n:E(1,l_n)\lr (Bl_4(\C\P^2),\Vol(E(1,l_n))\cdot\omega_\st).$$
Theorem \ref{thm:ExistenceTropicalSymplectic} yields an almost toric tropical curve $\widetilde{S}_n$ for each of these almost toric bases $\SP_n$, which yields the required symplectic configuration $C\sse Bl_4(\C\P^2)$ and with each 2-sphere in the same homology class as the 2-spheres in $C$. See Proposition \ref{prp:4Blup_A1A2A5} for the explicit construction of $\widetilde{S}_n$. Now, the equisingular symplectic isotopy class of these configuration is unique, by the same type of argument as in the cases of $Bl_3(\C\P^2)$ and $\C\P^1\times\C\P^1$. Indeed, blowing-down the two extremal 2-spheres in the classes $[E_1]$ and $[E_4]$ gives a $(+1)$-symplectic 2-sphere in $Bl_2(\C\P^2)$ disjoint from the remaining two exceptional divisors. This provides a symplectic isotopy in $Bl_2(\C\P^2)$ from our configuration $D(\widetilde{S}_n)$ to the standard configuration in Figure \ref{fig:IntroTable}. Since Lagrangian 2-spheres in $Bl_4(\C\P^2)$ are unknotted \cite{LagSph4}, and unique in their homology classes (up to Hamiltonian isotopy), the two Lagrangian 2-spheres can also be Hamiltonian isotoped to the standard configuration. This yields the required full embeddings
$$\iota_n:E(1,l_n)\lr (Bl_4(\C\P^2),\Vol(E(1,l_n))\cdot\omega_\st).$$

This concludes Theorem \ref{thm:main}, where we have assumed Theorem \ref{thm:ExistenceTropicalSymplectic}. Let us now construct the symplectic-tropical curves needed for Theorem \ref{thm:ExistenceTropicalSymplectic}. This is the content of Section \ref{sec:Symp_Trop}.

%%%%%%%%%%%%%%%%%%%%%%%%%%%%%%%%%%%%%%%%%%%%%%%%%%%%%%%%%%%%%%%%%%%%%%
%%%%%%%%%%%%%%%%%%%%%%%%%%%%%%%%%%%%%%%%%%%%%%%%%%%%%%%%%%%%%%%%%%%%%%
%%%%%%%%%%%%%%%%%%%%%%%%%%%%%%%%%%%%%%%%%%%%%%%%%%%%%%%%%%%%%%%%%%%%%%

\section{Symplectic Tropical Curves in Almost Toric Fibrations}\label{sec:Symp_Trop}

This section develops the construction of configurations of symplectic curves in terms of almost toric tropical diagrams. Theorem \ref{thm:ExistenceTropicalSymplectic} is the result from the present section used in the proof of Theorem \ref{thm:main}. In a nutshell, tropical curves in almost toric diagrams consist of two pieces: tropical curves in toric diagrams, as developed by G. Mikhalkin \cite{Mikhalkin1,Mikhalkin2}, and tropical local models near the cut singularities of the affine structure. These new tropical local models are discussed in this section. There are two useful perspectives: from the viewpoint of contact 3-manifolds or directly from the perspective of symplectic 4-manifolds; we will present the latter.\footnote{See a previous version of this manuscript on the arXiv for the former.}

\subsection{Symplectic-tropical curves} \label{ssec:Symp_Trop}

The central ingredient in Theorem \ref{thm:ExistenceTropicalSymplectic} and in our argument in Subsection \ref{ssec:proofmain} is the notion of a {\it symplectic-tropical curve} in an almost toric diagram, which we abbreviate {\it STC}. This is the content of the following definition.

Let $P_X \subset \R^2$ be an ATBD representing an ATF $\pi: X \lr B$ of a symplectic 4-manifold $X$. Let $\Gamma$ be an oriented graph, with edges decorated by primitive $\Z^2$ vectors and a multiplicity in $\Z_{> 0}$. Given an oriented edge $\gamma$ from a vertex $b$ to a vertex $c$, $b$ is said 
to be negative and $c$ positive, with respect to $\gamma$. 

\begin{definition} \label{dfn:Symp_comp_graph}
A symplectic-tropical curve $\SC : \Gamma \lr P_X$ is a $C^0$-embedding which satisfies the following conditions:  
  
\begin{enumerate}[label=(\roman*)]
  
\item Vertices are either univalent {\it (boundary)}, bivalent {\it (bending)}  or trivalent {\it 
(interior)}. The edges associated with boundary vertices shall be called {\it leaves};

\item All boundary vertices are negative;

\item Images under $\SC$ of boundary vertices are either on the boundary of 
the polytope $P_X$ or on a node. The images of bending vertices belong to the cuts (hence come endowed with an associated monodromy matrix). The images of interior vertices
belong to the complement of cuts, nodes, and boundary;

\item $\SC$ restricted to the (interior of) the edges is a $C^{\infty}$-embedding and 
tangent lines have lateral limits at each vertex, which are oriented according to the orientation
of the corresponding edge. We call a vector on this limit tangent lines a {\it limit} vector;

\item If $\bv$ is a positively oriented vector tangent to the image of an edge under $\SC$, with associated primitive 
vector $\bw \in \Z^2$, then $\langle \bv | \bw \rangle > 0$;

\item For a boundary vertex over the boundary of $P_X$, the primitive $\Z^2$-vector 
associated to its corresponding leaf must be orthogonal to the boundary
and pointing towards the interior of $P_X$. (The multiplicity of the edge can be arbitrary); 

\item For a boundary vertex over a node of $P_X$, the primitive $\Z^2$-vector
      $\bw$ associated to its corresponding leaf must be orthogonal to the cut, 
      and its orientation is determined by (ii) and (v). (The multiplicity can be arbitrary);
   
\item Let $\gamma_1$ and $\gamma_2$ be two edges meeting at a bending vertex,
      with monodromy matrix $M$. First, the bending vertex must be positive
      w.r.t. one edge and negative w.r.t. the other. Assume that we go
      counter-clockwise from $\SC(\gamma_1)$ to $\SC(\gamma_2)$. If $\bv$ is
      a positively oriented limit vector for $\SC(\gamma_1)$ at this bending
      vertex, then $M \bv$ is a positively oriented limit vector for
      $\SC(\gamma_2)$. Moreover, if $\bw$ is the primitive vector associated to
      $\gamma_1$, then the primitive vector associated to $\gamma_2$ must be
      $(M^T)^{-1} \bw$. In this case, the multiplicity of $\gamma_1$ and $\gamma_2$ must
      be the same;

\item For the three edges $\gamma_j$, $j = 1,2,3$, associated to an interior vertex $b$, 
with associated vectors $\bw_j$ and multiplicity $m_j$, the following balancing 
condition must be satisfied:

\begin{equation} \label{eq:BalCond}
  \varepsilon_1 m_1 \bw_1 + \varepsilon_2 m_2 \bw_2 + \varepsilon_3 m_3 \bw_3 = 
0, 
\end{equation} 
where $\varepsilon_j = \pm 1$ according to $b$ being positive or negative with 
respect to $\gamma_j$, $j = 1,2,3$.\hfill$\Box$     
\end{enumerate}
  \end{definition}

\begin{figure}[h!]   
  \begin{center} 
 \centerline{\includegraphics[scale= 0.6]{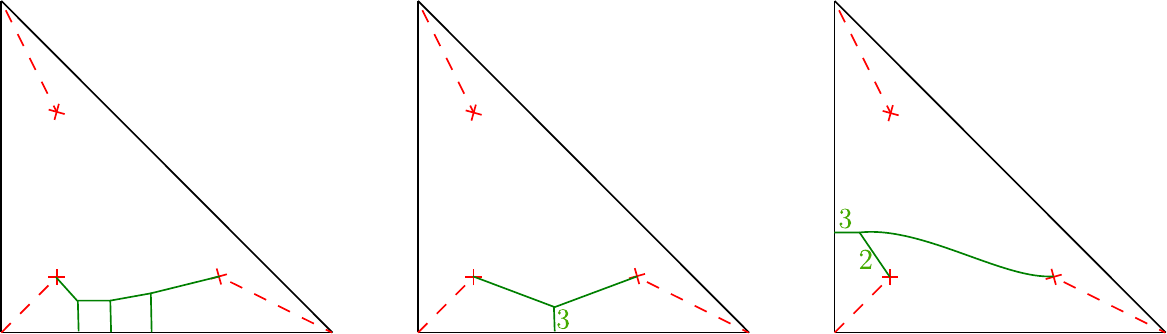}}

\caption{Examples of symplectic-tropical curves in an ATBD of $\CP^2$.}
\label{fig:Curves_CP2} 
\end{center} 
\end{figure}

There are many conditions in Definition
\ref{dfn:Symp_comp_graph}, but the idea is the following. We travel
along an edge with cycles, according to the multiplicity, represented by its
associated vector, as in Remark \ref{rmk:Symp_Curves}. Condition (v) will
guarantee that the surfaces are symplectic as we travel along the edges; condition (ix)
implies that the sum of these cycles arriving at a vertex are null-homologous, which will
allow us to glue them together (by a bounding null-homology); conditions (vi) and (vii) guarantee that the
corresponding cycles collapse as we arrive at an edge, or can collapse at the
nodal fiber; condition (viii) says that the map $\Gamma \to B$ given by
composing with the inverse of the homeomorphism $B \to P_X$, is actually
smooth at the bending vertex, and the associated vector changes accordingly. The remaining pieces of notation are required to write the balancing condition \eqref{eq:BalCond} in
a precise and consistent way.    

We represent a symplectic-tropical curve by just drawing its image $\SC(\Gamma)$, and labeling the multiplicity 
of each edge. The label is implicitly $1$ if the edge is drawn unlabeled. 
Conditions (vi)--(ix) determine the associated vectors and multiplicities, 
up to the ambiguity given by the sign and orientation of the edges
that are not leaves, which we basically ignore since they are
just an artifact to write (ix) consistently. For instance, Figure \ref{fig:Curves_CP2} represents 3 different symplectic
lines in an ATF of $\CP^2$. In the leftmost picture, 
the leaves are decorated with the associated vectors $(1,-1)$, $(0,1)$ and $(-1,-2)$, 
accordingly,
while the other two edges have labels $\pm (1,0)$ and $\pm (1,1)$, 
the signs being determined by (ix) and by how one decides to orient the edges. 

The core constructive result in this section, which justifies Definition \ref{dfn:Symp_comp_graph}, is the following:

\begin{thm} \label{prp:symp_trop}  Let $\SC : \Gamma \lr P_X$ be a symplectic-tropical curve as above, and $\cN\sse P_X$ a neighborhood of $\SC(\Gamma)$. Then there exists a closed symplectic curve\footnote{A {\it real} symplectic surface, i.e. two real dimensions.} $C$ embedded in $X$, projecting to $\cN$ under $\pi: X \lr B$. In addition, the intersection of $C$ with the anti-canonical divisor $K_X\sse X$
defined by the boundary of $P_X$ is 
given by the sum of the multiplicities of the corresponding boundary vertices.
\hfill$\Box$
\end{thm}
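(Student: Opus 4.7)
The plan is to build the symplectic curve $C$ by a local-to-global construction, producing an explicit piece of surface above each edge and each vertex of $\Gamma$ and then gluing them. Throughout, I use the action-angle description of a neighborhood of a regular fibre, $U\times T^2$ with symplectic form $dp_1\wedge d\theta_1+dp_2\wedge d\theta_2$ (Remark \ref{rmk:Symp_Curves}), so that to each oriented path in $U$ together with a primitive $\Z^2$-cycle $\bw$ of multiplicity $m$ on the fibre I can attach a symplectic cylinder of area proportional to the affine displacement transverse to $\bw$. The positivity condition (v) in Definition \ref{dfn:Symp_comp_graph} is precisely what guarantees that the cylinder built over the (tubular neighborhood of the) edge is symplectic, and the multiplicity $m$ gives an $m$-fold covering cycle, producing an $m$-sheeted symplectic cylinder above a small tube around the edge inside $\cN$.

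Next I construct the local models at the five types of vertices. At a boundary vertex on an edge of $P_X$, condition (vi) says the incoming cycle is the toric vanishing cycle, so it collapses along the standard elliptic corank-one normal form of Definition \ref{dfn:ATF}, capping the cylinder by a symplectic disk (or $m$ disks if the multiplicity is $m$, contributing $m$ to the intersection with the anti-canonical divisor $K_X$, which is exactly the content of the last sentence of the theorem). At a boundary vertex on a node, condition (vii) forces the cycle to be the vanishing cycle of the nodal (focus-focus) fibre, so the model $\pi(x,y)=\bar x y$ provides a symplectic vanishing disk (the Lefschetz thimble) that caps the cylinder symplectically; the multiplicity is obtained by taking a multi-section or equivalently a parallel copies construction in the Lefschetz model. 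At a bending vertex on a cut, condition (viii) asserts that the limit tangent vectors and associated cycles transform by the monodromy pair $(M,(M^T)^{-1})$, which is exactly the identification between the two sides of the cut in the global affine/symplectic structure of the ATF; so the two cylinders glue smoothly and symplectically to one another across the cut. Finally, at an interior trivalent vertex, I build the local pair-of-pants model: condition (v) together with the balancing condition (ix) means that the three weighted primitive cycles $m_j\bw_j$ sum to zero in $H_1(T^2;\Z)$, so they bound a topological pair-of-pants on the fibre, and by Mikhalkin's standard construction (extended to the smooth-symplectic setting here, following \cite{Mi04a,Mi04b} and the symplectic upgrade worked out in the earlier subsections of Section \ref{sec:Symp_Trop}) this pair-of-pants can be swept over a small neighborhood of the vertex in $U$ to yield an embedded symplectic pair-of-pants matching the three cylinders along their boundaries. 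The positivity inequality $\langle \bv|\bw\rangle>0$ on each of the three outgoing directions is exactly the Liouville-type condition that makes the swept surface symplectic.

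Having produced symplectic surfaces above tubular neighborhoods of each edge and vertex, I now glue. By construction, adjacent pieces agree on overlaps: each edge-cylinder is terminated on each end either by a cap (boundary vertex), a thimble (node), a smooth transition (bending vertex), or one of the three legs of a pair-of-pants (interior vertex). Smoothing the seams by a cutoff in the action coordinate gives a globally embedded closed $C^\infty$ surface $C\subset X$ whose projection lies in $\cN$. The symplectic condition is open and is verified locally on each piece, so after a small $C^1$-smoothing $C$ remains symplectic; if desired, one applies a standard perturbation argument (symplectic neighborhood theorem applied along the gluing loci) to guarantee $C$ is actually symplectic, not merely close to symplectic. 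The intersection number with $K_X=\pi^{-1}(\partial P_X)$ is local to the boundary caps and equals the sum of the multiplicities of the corresponding boundary vertices on $\partial P_X$, as claimed.

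The main obstacle I expect is the trivalent-vertex local model: assembling a \emph{symplectic} (and embedded) pair-of-pants whose three ends match the three prescribed weighted primitive cycles, rather than just a topological one. The balancing condition (ix) is clearly necessary for such a pair-of-pants to exist topologically, and the positivity condition (v) is what converts it into a symplectic surface via the sweep-out in $U$; one must however check carefully that the sweep-out can be chosen so the resulting surface is embedded (no self-intersections from cycles crossing in the fibre direction) and joins the three incoming cylinders in a $C^1$ fashion. The rigorous check of this point is what occupies Subsections \ref{ssec:Symp_Trop}--\ref{subsec:RequiredSTC} and motivates the list of conditions in Definition \ref{dfn:Symp_comp_graph}. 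The nodal-leaf model is a close second in subtlety, but it reduces to the classical Lefschetz thimble / vanishing-cycle picture, which is by now standard.
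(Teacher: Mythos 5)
Your proposal gets the overall architecture right — edge-cylinders via Remark~\ref{rmk:Symp_Curves}, local caps at each vertex type, then glue — and correctly reads conditions (v)--(ix) of Definition~\ref{dfn:Symp_comp_graph} as the symplectic/balancing requirements that make each local piece go through. The treatment of boundary vertices on $\partial P_X$ and of bending vertices is sound and matches the paper.

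There is, however, a genuine gap at leaves terminating on a node with multiplicity $k>1$. You assert that the multiplicity ``is obtained by taking a multi-section or equivalently a parallel copies construction in the Lefschetz model,'' but this fails as stated: any $k$ disjoint disks each capping a copy of the vanishing cycle cannot all pass through the focus-focus singular point, so they are not parallel thimbles. What the paper does (Proposition~\ref{prp:Local_node}) is take $\sigma_1$ through the singular point and build $\sigma_2,\dots,\sigma_k$ as $C^1$-small perturbations $y=-ix+\Psi_j(|x|^2)$ that avoid the singular point, remain symplectic, and are mutually disjoint; the price is that the boundaries $\partial\sigma_j$ then link $\partial\sigma_1$ once inside a thickened torus (Figure~\ref{fig:Loc_disks}) and no longer project onto a segment, so they do not match the straight cycles arriving from the edge-cylinders. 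This breaks the ``agree on overlaps'' step you invoke, and the paper has to do nontrivial work (Subsection~\ref{subsec:Conn_local_models}, Figure~\ref{fig:Conn_Dimer}, the ordering $\Psi_k>\dots>\Psi_2$, and bespoke dimers splicing curling boundaries to straight cycles) to make the gluing go through. Relatedly, you flag the trivalent vertex as the main obstacle; the paper in fact offers Mikhalkin's patchworking as one off-the-shelf option there and then gives an independent dimer construction with explicit symplectic checks \eqref{eq:Symp_Amoeba}, whereas the multi-leaf nodal gluing is where the new technical content lies. To repair your argument you would need, at minimum, to replace the ``parallel thimbles'' step by the $\Psi_j$-perturbation construction and to add a gluing lemma translating the resulting linked, curling boundaries into straight multi-cycles compatible with the rest of the surface.
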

 
In this context, we say that the symplectic surface $C$ is represented by the symplectic-tropical curve $\SC (\Gamma)$. See \cite[Proposition~8.2]{Symington} for a discussion on the anti-canonical divisor $K_X\sse X$. In order to prove Theorem \ref{prp:symp_trop}, we need to construct local 
models near the nodes and the trivalent vertices. This is the content of the following subsections.
 
\begin{remark} We could consider embeddings of graphs with interior vertices having
valency greater than $3$, and write a rather involved definition for
symplectic-tropical curves. In this case, these graphs could be viewed as a
limit of graphs as in Definition \ref{dfn:Symp_comp_graph}, and a result similar to Theorem \ref{prp:symp_trop} holds.\hfill$\Box$
\end{remark}

%%%%%%%%%%%%%%%%%%%%%%%%%%%%%%%%%%%%%%%%%%%%%%%%%%%%%%%%%%%%%%%%%%%%%%
%%%%%%%%%%%%%%%%%%%%%%%%%%%%%%%%%%%%%%%%%%%%%%%%%%%%%%%%%%%%%%%%%%%%%%

\subsubsection{Local model near the nodes} \label{subsec:Local_Nodes}

The interesting case near the nodes is that we may arrive at a node with multiplicity $k\in\N$ greater than $1$.
% It is possible to address this using the transverse $k$-component links, as explained in Subsection \ref{ssec:transverse}. 
 We shall now argue directly in the 4-dimensional symplectic domain and monitor the projections onto the ATBD. In principle, we would need to get $k$ disjoint capping 2-disks with boundary in $k$ copies of the collapsing cycle nearby a nodal singularity. That is not
possible if we force the boundary of these 2-disks to be entirely contained in a torus fibre. 
In consequence, for $k >1$, our 2-disks cannot project exactly over a segment under the projection $\pi$, 
but rather onto a 2-dimensional thickening of a segment in $P_X$.
 
Let us start with the local model for $k = 1$, where we are to collapse only one cycle through a symplectic 2-disk to the singular point. Consider the local model of a nodal fibre 
as in Definition \ref{dfn:ATF}, and the complex notation $\pi(x,y)=\overline{x}{y}$ for the almost toric fibration the map $\pi$. Choose $\varepsilon\in\R^{>0}$ sufficiently small. In this case, the 2-disk
$$\sigma_1 = (re^{i\theta},-ire^{i\theta}),\qquad 0 \le |r| \le \varepsilon,\quad\theta \in [0, 2\pi]$$
is a symplectic 2-disk with boundary
$c(\theta) = (\varepsilon e^{i\theta}, -i\varepsilon e^{i\theta})$, which is part of the 
symplectic line $y = -ix$ and projects via $\pi = \overline{x} y$ to the half-line $i\R_{\le 0}$. Let us now discuss the case of higher $k\in\N$.

Let us introduce a second symplectic 2-disk in this neighborhood, disjoint from 
the above, so that its boundary is isotopic, and arbitrarily close, to the collapsing boundary cycle 
$c(\theta)$ above. Fix a value $\delta_2\in\R^{>0}$, thought to be small with respect to $\varepsilon^2$, and 
a monotone non-increasing $C^{\infty}$-bump function $\Psi_2(s)$ such that
$$\Psi_2(s)\equiv\delta_2\mbox{ for }s\approx 0,\qquad \Psi_2(s)\equiv0\mbox{ for }s \ge 1,\qquad \Psi_2(s) > 0\mbox{ for }s \in [0, \varepsilon^2].$$

By taking $\delta_2$ sufficiently small, 
we can take $\Psi$ as $C^1$-close to $0$ as necessary. We
can then guarantee that the line $l_2 := \{y = -ix + \Psi_2(|x|^2)\}$ is still symplectic.
Then our second disk $\sigma_2$ is taken to be the intersection of the symplectic line $l_2$ with 
the half-space defined by $\im (\overline{x} y) \ge - \varepsilon^2$, where $\im$ denotes the imaginary part.
Note that, by construction, $\sigma_2 \cap \sigma_1 = \emptyset$.

Now, in order to then construct $k\in\N$ mutually disjoint symplectic 2-disks, 
with their boundary being isotopic and close to the collapsing boundary cycle 
$c(\theta)$, we only need to take $\delta_k > \delta_{k-1} > \cdots >
\delta_2$, $\delta_i\in\R^{>0}$, $2\leq i\leq k$, and the corresponding monotone non-increasing $C^{\infty}$-
bump functions $\Psi_k(s)> \Psi_{k-1}(s) > \cdots >
\Psi_2(s)$, all sufficiently $C^1$-close to $0$ so that 
the lines $l_j :=\{y = -ix + \Psi_j(|x|^2)\}$ are
symplectic for $j= 2, \dots, k$. The symplectic 2-disk $\sigma_j$ is
then taken to be the intersection 
of the symplectic line $l_j$ with the half-space $\im (\overline{x} y) \ge - \varepsilon^2$. 
See Figure \ref{fig:Loc_disks} (Left) for
a depiction of the projections $\pi(\sigma_j)$ of such symplectic 2-disks.

\begin{remark}\label{rmk:disjoint}
The equations for the symplectic lines $\l_j$ imply
  that each 2-disk $\sigma_j$, $1\leq j\leq k$, intersects the Lagrangian plane $x=y$ at a point where $x = -ix + \Psi_j(|x|^2)$, since by Pythagoras Theorem, we need 
  $\Psi_j(|x|^2) = \sqrt{2}|x|$, which occurs since we have chosen $\delta_j\ll 
  \varepsilon^2$.\hfill$\Box$
\end{remark}

Note that the boundary $\del \sigma_j$ projects under $\pi$ to a segment $I_j$ normal to the half-line
$i\R_{\le 0}$. Now, using the fact that the lines $l_j$ are isotopic to the line $y = -ix$, which
topologically self-intersects once (e.g. relative to the boundary at infinity),
we can deduce that the boundary $\del \sigma_j$ links $\del \sigma_1$ exactly once in the thickened
annulus $\pi^{-1}(I_k)$, as illustrated in the rightmost picture of Figure \ref{fig:Loc_disks}. 

\begin{figure}[h!]   
  \begin{center} 
\begin{minipage}{0.4\textwidth}
\centerline{\includegraphics[scale=0.6]{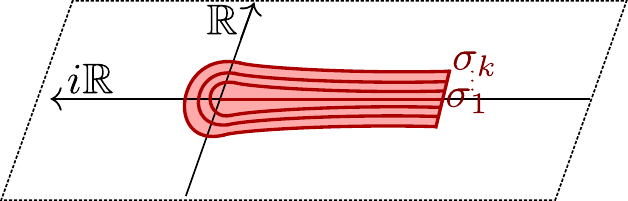}}
\end{minipage}
\hspace{0.5cm}
\begin{minipage}{0.4\textwidth}
\centerline{\includegraphics[scale=0.6]{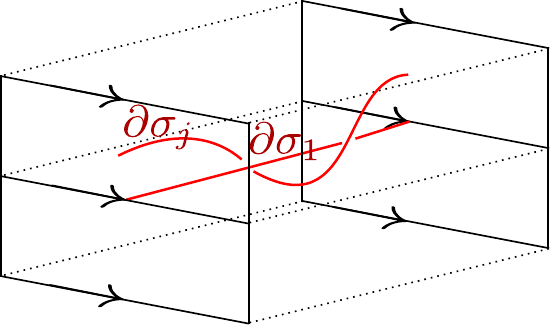}}
\end{minipage}

\caption{The left picture illustrates the projection of the $\sigma_j$ disks. The right picture 
illustrates how  $\del \sigma_j$ and $\del \sigma_1$ are linked in the thickened
annulus $\pi^{-1}(I_k)$. }
\label{fig:Loc_disks} 
\end{center} 
\end{figure}

In proving Theorem \ref{prp:symp_trop}, we may assume that the image of the leaves arrive
at a node in a segment, which we can identify in our local model with a segment 
in $i \R_{\le 0}$. The above discussion is then summarized in the following proposition:

\begin{proposition} \label{prp:Local_node}{\it  Let $\pi:X \lr B$ be an ATF, represented by
an ATBD $P_X$, where we homeomorphically identify $B$ with $P_X$. Let $\gamma$
be a leaf of a symplectic-tropical curve
$\SC: \Gamma \lr P_X$, with boundary vertex over a node, multiplicity $k \in \Z_{>0}$, and $\cN$ a neighborhood of $\SC(\gamma)\sse B$. Fix a point $p \in \SC(\gamma)$ close to the node with collapsing 
cycle $\alpha \subset \pi^{-1}(p)$. 

Then we can associate $k$ disjoint symplectic 2-disks 
$\sigma_j$, $1\leq j\leq k$, such that $\pi(\sigma_j) \subset \cN$, 
with their boundary $\del \sigma_j$ arbitrarily close 
to $\del \sigma_1 = \alpha$.}
\end{proposition}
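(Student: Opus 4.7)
The plan is to package the explicit construction already developed in the preceding paragraphs into a proof, after first reducing the global geometric setup to the local normal form near a nodal singularity. First, I would work in a Darboux neighborhood of the nodal critical fibre and use the model of Definition \ref{dfn:ATF} to write $\pi(x,y)=\bar x y$ with $(x,y)\in\C^2$. By condition (vii) of Definition \ref{dfn:Symp_comp_graph}, the associated primitive vector $\bw$ of the leaf $\gamma$ is orthogonal to the cut direction, so after applying an $SL(2,\Z)$ change of basis on the integral affine structure of $B$ (which lifts to a symplectomorphism of the local model), I may assume the image $\SC(\gamma)$ arrives at the node along a segment of $i\R_{\leq 0}\subset\C$, and the collapsing cycle $\alpha\subset\pi^{-1}(p)$ is identified with the circle $c(\theta)=(\sqrt{|\pi(p)|}\,e^{i\theta},-i\sqrt{|\pi(p)|}\,e^{i\theta})$.

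Next, I would write down the candidate disks. Set $\sigma_1=\{(re^{i\theta},-ire^{i\theta}):0\leq r\leq\varepsilon,\ \theta\in[0,2\pi]\}$, a piece of the holomorphic (hence symplectic) line $y=-ix$ whose projection is contained in $i\R_{\leq 0}$ and whose boundary is $\alpha$ (up to choosing $\varepsilon$ so that $p$ lies on this segment). Then choose $0<\delta_2<\delta_3<\cdots<\delta_k$ with $\delta_k\ll\varepsilon^2$, and monotone non-increasing bump functions $\Psi_j$ with $\Psi_j\equiv\delta_j$ near $0$ and $\Psi_j\equiv 0$ on $[1,\infty)$, taken $C^1$-small. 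Define $l_j=\{y=-ix+\Psi_j(|x|^2)\}$ and let $\sigma_j=l_j\cap\{\mathrm{im}(\bar x y)\geq-\varepsilon^2\}$.

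The verifications are then: (a) each $\sigma_j$ is symplectic, because $l_j$ is a $C^1$-small perturbation of the symplectic (in fact complex) line $y=-ix$, and symplecticity is an open condition, so for $\Psi_j$ sufficiently $C^1$-small the restriction of $\omega_{\mathrm{st}}$ remains non-degenerate; (b) the $\sigma_j$ are pairwise disjoint, which follows from Remark \ref{rmk:disjoint} together with the fact that the strict ordering $\Psi_2<\Psi_3<\cdots<\Psi_k$ separates the fibres of $\pi$ over any point where the disks could meet; (c) the projections $\pi(\sigma_j)$ form 2-dimensional thickenings of $i\R_{\leq 0}$ whose widths are controlled by the $\delta_j$, hence by shrinking $\delta_j$ and $\varepsilon$ they fit inside the prescribed neighborhood $\cN$ of $\SC(\gamma)$; (d) the boundaries $\partial\sigma_j$ lie in the thickened annulus $\pi^{-1}(I_k)$ near $\partial\sigma_1=\alpha$ and can be made arbitrarily $C^0$-close to $\alpha$ by choosing $\Psi_j$ small in the $C^0$ sense.

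The main obstacle to state cleanly is point (d), more precisely the claim that each $\partial\sigma_j$ is isotopic to $\alpha$ inside the nearby torus fibre in the correct homotopy class, despite the fact that the disks project to a \emph{thickening} of the half-line rather than the half-line itself. The key observation here is that $l_j$ is smoothly isotopic to $y=-ix$ through the family $\{y=-ix+t\Psi_j(|x|^2)\}_{t\in[0,1]}$ of symplectic lines; monitoring the boundary cycle along this family shows that $\partial\sigma_j$ is homologous to $\alpha$ in $\pi^{-1}(I_k)$ and links $\partial\sigma_1$ once, which is exactly the information needed to interpret the $k$ disks as $k$ disjoint capping discs of parallel copies of the collapsing cycle, as required for the gluing step in Theorem \ref{prp:symp_trop}.
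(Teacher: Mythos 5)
Your proposal reproduces the paper's own construction essentially verbatim: same reduction to the nodal local model $\pi(x,y)=\bar x y$, same disk $\sigma_1$ on the line $y=-ix$, same nested bump functions $\Psi_2<\cdots<\Psi_k$ defining the lines $l_j$ and the disks $\sigma_j$ as their intersections with the half-space $\im(\bar x y)\geq -\varepsilon^2$, and the same verification of symplecticity via $C^1$-smallness, disjointness via the strict ordering, and linking number one via the topological self-intersection of $l_j$ with $y=-ix$. The only minor slip is attributing the disjointness to Remark \ref{rmk:disjoint}, which is actually about each $\sigma_j$ intersecting the Lagrangian plane $x=y$ once (used later for the deformation argument), whereas the disjointness is the separate observation that $\Psi_i\neq\Psi_j$ on the relevant range separates the lines $l_i,l_j$; but your parenthetical explanation of the separation is correct and is what actually does the work.
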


%%%%%%%%%%%%%%%%%%%%%%%%%%%%%%%%%%%%%%%%%%%%%%%%%%%%%%%%%%%%%%%%%%%%%%
%%%%%%%%%%%%%%%%%%%%%%%%%%%%%%%%%%%%%%%%%%%%%%%%%%%%%%%%%%%%%%%%%%%%%%

\subsubsection{Local modal near interior vertices} \label{subsec:Local_Int}

Condition (v) in Definition \ref{dfn:Symp_comp_graph} and Remark \ref{rmk:Symp_Curves} allows us to transport the boundary of the symplectic 2-disk
$\sigma_1$ in Proposition \ref{prp:Local_node} along the image of the
corresponding leaf until it is close to an interior node. (Note that if we hit a bending
vertex, condition (viii) guarantees that we can keep moving the same
cycle $\del \sigma_1$, whose class in the first homology of the fibre is then
represented by a different $\Z^2$ vector according to the monodromy.) In addition, the boundaries $\del \sigma_j$ project to a segment normal to the leaf and, because of its
closeness to $\del \sigma_1$, we can also transport it using cycles
projecting under $\pi$ to small segments normal to the leaf. In line with Remark \ref{rmk:Symp_Curves}, we can ensure that these surfaces remain symplectic.   

At this stage, we need to construct a local model for a symplectic surface near a trivalent vertex
$b$, using the data of (ix), that can be made to project to a given
neighborhood of $\SC(b)$. Moreover, this model needs to glue with 
prescribed incoming cycles. For that, a first option is to rely on the article \cite{Mi04a}, where G. Mikhalkin uses O. Viro's patchworking
ideas to construct families of hypersurfaces in $(\C^*)^n$, whose amoebae
converge to a given tropical hypersurface in $\R^n$, see
\cite[Remark~5.2]{Mi04a}. In that manuscript, G. Mikhalkin actually views $(\C^*)^n$ as the open stratum of a
closed toric variety, in particular having finite volume, so we can
symplectically assume that it is indeed a local model. Given the nature
of Theorem \ref{prp:symp_trop}, we can assume that 
the symplectic-tropical curve arrives at the interior vertex in a tropical way,
i.e., locally as segments $\bbv_j = \bbw_j$ satisfying the balancing condition
\eqref{eq:BalCond} of Definition \ref{dfn:Symp_comp_graph}.

\begin{figure}[h!]   
  \begin{center} 
 \centerline{\includegraphics[scale= 0.6]{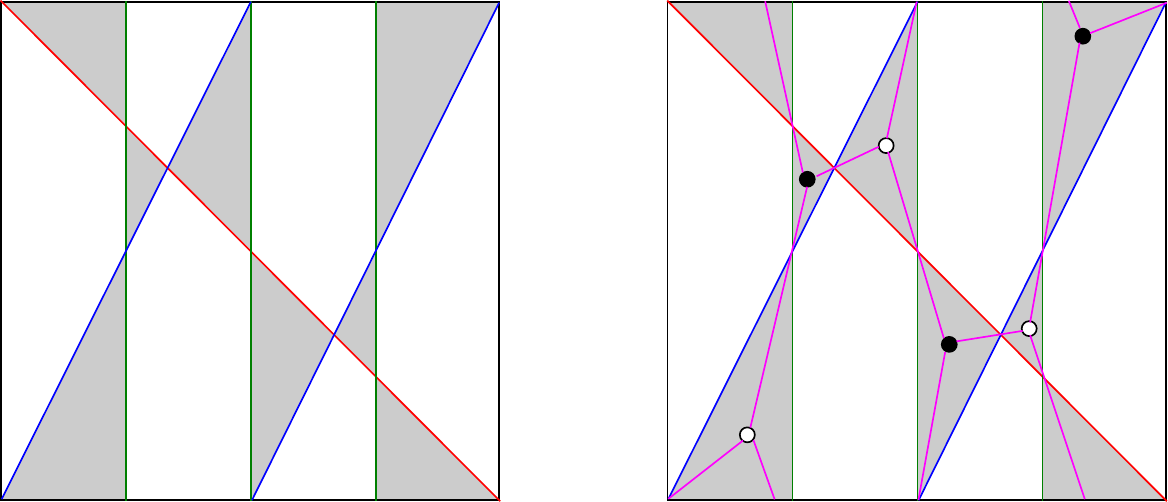}}

\caption{Collection of cycles in $T^2$ satisfying the balancing condition \eqref{eq:BalCond} and its associated dimer. These cycles 
correspond to the ones arriving at the interior vertex of the symplectic-tropical curve in the middle diagram
of Figure \ref{fig:Curves_CP2}. It represents the coamoeba in $\cB$ of the symplectic surface we will construct.}
\label{fig:Dimer} 
\end{center} 
\end{figure}

A second option, independent of \cite{Mi04a}, is using the following explicit local model. Over a small disk $\cB\sse B$
centered at an interior vertex $b \in B$, we have $\pi^{-1}(\cB) \cong \cB \times
T^2$, and the fibration is given by the projection onto the first factor. The projection of the
symplectic curve to the $T^2$ factor is known as the coamoeba of that curve. We
aim at first constructing what will be the coamoeba out of the balancing
condition \eqref{eq:BalCond} and -- out of that data -- then building our local model for the
symplectic curve. We want the surface to be so that its boundary projects to
straight cycles, having only double crossings. Moreover, away from the pre-image
of the double crossings, the rest of the surface will project injectively into
polygons divided by the straight cycles. The homology classes of the boundary
are represented by $m_j$ disjoint copies of $\varepsilon_j\bw_j$, $j = 1,2,3$,
where $\bw_j$ are the vectors associated with the interior vertex $b$ and
hence satisfy the balancing condition \eqref{eq:BalCond} from Definition
\ref{dfn:Symp_comp_graph} (ix). Figure \ref{fig:Dimer} (Left) illustrates the 
coamoeba of the local model we will build for the neighborhood of the interior 
vertex in the ATBD from Figure \ref{fig:Curves_CP2} (Center). The balancing condition 
associated with the interior vertex is $ (1,-1) + 3(0,1) + (-1,-2) = 0$.

The existence of the above mentioned configuration for the coamoeba is equivalent to the 
existence of a \emph{dimer model}\footnote{A dimer model \cite{Kas67} is a bipartite graph, and we name 
	half of the vertices black and the other half white, so edges connect a black 
	vertex to a white one.} embedded in $T^2$. We label each convex polygon of our coamoeba
black or white, where a black polygon can only share a vertex with a 
white one. The vertices of the dimer model are then placed in the interior
of the polygons according to their colours, and for each intersection
of the boundary cycles we associate an edge, projecting inside the coamoeba. See Figure \ref{fig:Dimer} (Right). 
Then the straight cycles are taken to be the collection 
of zigzag paths associated to the dimer. For concepts related to dimer models, including zigzag paths, and its
relationship with coamoebas, we refer the reader to
the recent works \cite{FHKV08,Fo19,Gul08,Hi19}. Thus, from the discussion above, our aim is then to provide a dimer model with prescribed set of homology classes for its
collection of zigzag paths. Moreover, we want the zigzag paths to be straight. This is achieved in Proposition \ref{prp:Dimer3} below.

\begin{remark} If one does not require the paths to be straight, then the article \cite[Section~6]{Gul08} constructs
an algorithm to build a dimer model out of the prescribed classes for the
collection of zigzag paths. (Again, with non-straight cycles.) Now, note that in \cite[Example~4.1]{Fo19}, an example of a collection of $5$
classes in $H_1(T^2)$ that cannot be realized by straight cycles, that
are the zigzag paths of a dimer model, is given. Nevertheless, in case the collection 
of classes are given by copies of only $3$ primitive classes, 
one can in fact construct a dimer model with straight set of zigzag paths,
as shown in the upcoming Proposition \ref{prp:Dimer3}. We believe this is likely well-known
to experts but we did not find references in the literature.\hfill$\Box$
\end{remark}

\begin{figure}[h!]   
  \begin{center} 
 \centerline{\includegraphics[scale= 0.6]{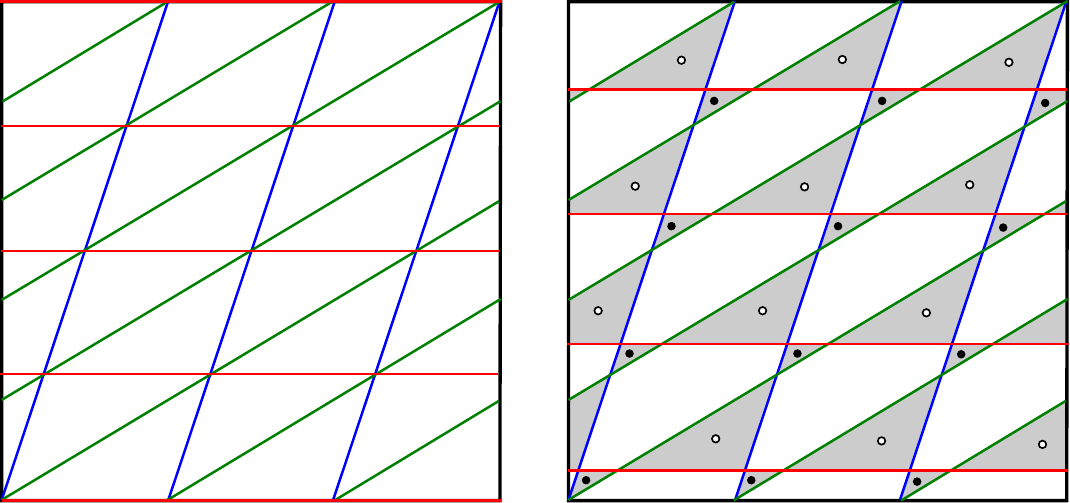}}

\caption{Construction of a dimer model with the prescribed collection 
$4(1,0) + (1,3) + (-5,-3) = 0$ of zigzag paths.}
\label{fig:Dimer5} 
\end{center} 
\end{figure}

\begin{proposition} \label{prp:Dimer3}
  Given $\bbw_1,\bbw_2,\bbw_3 \in H_1(T^2;\Z)$, primitive classes satisfying 
  $$m_1\bbw_1 + m_2\bbw_2 +m_3\bbw_3 = 0,\qquad m_1,m_2,m_3\in\N,$$
  there exists a dimer model 
  in $T^2$ with zigzag paths realised by $m_j$ straight lines in
  the classes $\bbw_j$, $j= 1,2,3$. Moreover, the components containing the vertices
  of the dimer can be taken to be triangles.  
\end{proposition}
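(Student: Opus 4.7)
The plan is to construct the dimer directly from an arrangement of straight closed geodesics on $T^2 = \R^2/\Z^2$.

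First, I would apply an $SL(2,\Z)$-change of basis so that $\bbw_1 = (1,0)$; writing $\bbw_2 = (a_2, b_2)$ and $\bbw_3 = (a_3, b_3)$, the balancing relation splits into $m_2 b_2 + m_3 b_3 = 0$ and $m_1 + m_2 a_2 + m_3 a_3 = 0$. I would then place $m_j$ equally-spaced parallel straight closed geodesics in direction $\bbw_j$ on $T^2$, with generic offsets ensuring only transverse pairwise intersections and no triple points. The number of pairwise intersections between families $i$ and $j$ is $m_i m_j |\det(\bbw_i, \bbw_j)|$, and a short calculation using $\sum_j m_j \bbw_j = 0$ shows that this equals a common value $D$ for all three pairs. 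Euler's formula on $T^2$ then gives $V = 3D$ vertices (all $4$-valent), $E = 6D$ edges, and $F = 3D$ faces.

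The main combinatorial claim is that the offsets can be chosen so that $2D$ of these faces are triangles bounded by one arc from each of the three families, and the remaining $D$ are hexagons whose edges alternate among the three directions. The $2D$ triangles split into $D$ ``upward'' and $D$ ``downward'' according to the local cyclic order of their bounding arcs. I would argue this by lifting to the universal cover $\R^2$: the three primitive directions together with multiplicities $m_j$ extend to a standard brick-wall/honeycomb tiling of $\R^2$ having the required triangle-and-hexagon structure, and $\sum_j m_j \bbw_j = 0$ is precisely the condition that this tiling descends $\Z^2$-equivariantly to $T^2$. A local count at each $4$-valent vertex (where exactly one upward triangle, one downward triangle, and two hexagons must meet) confirms that the face totals match $V = 3D$, $E = 6D$, $F = 3D$.

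Given this tiling, I would assemble the dimer by placing a black vertex inside each upward triangle, a white vertex inside each downward triangle, and a dimer edge across each $4$-valent crossing, joining the black and white vertices in the two triangles meeting at that crossing. The remaining $D$ hexagonal regions become the faces of the resulting bipartite graph, and by construction the components containing dimer vertices are triangles, as claimed in the proposition. A direct local inspection at each crossing confirms that the zigzag rule (``turn maximally left at one colour, maximally right at the other'') sends the path along a single straight line of the arrangement, so that the zigzag paths of the dimer are exactly the $m_j$ straight closed geodesics in class $\bbw_j$.

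The principal obstacle is the triangle-hexagon claim above: exhibiting an offset configuration so that exactly $2D$ faces are triangles (in two orientations) and $D$ are hexagons with alternating sides. I expect this to follow from the explicit model in the universal cover, combined with the descent guaranteed by the integrality of $\sum_j m_j \bbw_j = 0$. An alternative approach is an induction on $m_1 + m_2 + m_3$, enlarging the arrangement by one geodesic at a time via a local surgery preserving the combinatorial structure.
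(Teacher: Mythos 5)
Your dimer is the same as the paper's: $m_j$ straight closed geodesics in direction $\bbw_j$, black and white vertices placed in the two orientations of triangular face, edges across the $4$-valent crossings, hexagons as faces of the resulting bipartite graph; the Euler bookkeeping, the common crossing count $D$, and the local zigzag check are all correct. The gap is exactly the one you flag: the triangle-hexagon claim is asserted, not established. Three families of parallel geodesics on $T^2$ with arbitrary generic offsets do not automatically decompose $T^2$ into triangles and hexagons with alternating sides — at minimum one must verify that, along each geodesic, the crossings with the two other families interleave, and then derive the face structure from that. Invoking ``a standard brick-wall/honeycomb tiling of $\R^2$ that descends to $T^2$'' does not supply this derivation, so as written the argument is a sketch precisely at its crux.

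The paper resolves this by starting from a \emph{degenerate} arrangement rather than a generic one. After normalizing $\bbw_1 = (1,0)$, it routes the $m_2$ lines of slope $\bbw_2$ and the $m_3$ lines of slope $\bbw_3$ through the first $m_2$ and $m_3$ of $|m_2b_2| = |m_3b_3|$ equidistant points on the circle $\{y=0\}$, observes that all resulting $\bbw_2$-$\bbw_3$ crossings then lie at heights $k/m_1 \bmod \Z$, and places the $m_1$ horizontal lines exactly at those heights. Consequently \emph{every} intersection in the arrangement is a triple point, and the complement of the arrangement in $T^2$ is visibly a union of $2D$ triangles — no interleaving argument is needed. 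Pushing the horizontal family up by a small amount resolves each triple point into a small triangle and trims the original triangles in a prescribed way, and the resulting cell structure (Figure~\ref{fig:Dimer5}) is the required trihexagonal one. Your ``generic offset'' configuration is essentially the output of this perturbation; the paper simply reaches it through the degenerate limit where the combinatorics is transparent. To close the gap you should either carry out the interleaving/face-structure verification you have deferred, or adopt the triple-point-then-perturb construction.
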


\begin{proof} Figure \ref{fig:Dimer5} essentially provides a proof by drawing. In detail, let us assume, without loss of generality, that $\bbw_1 = (1,0)$, and let $\bbw_2 =
(a_2,b_2)$, $\bbw_3 = (a_3,b_3)$. Take the straight cycle $[0,1]\times\{0\}$ in
$T^2 = \R^2/\Z^2$, in the class $\bbw_1 = (1,0)$, and consider $|m_2b_2| =
|m_3b_3|$ equidistant points in $[0,1]$. For $j = 2,3$, take $m_j$ straight lines
with slope $\bbw_j$, passing through the first $m_j$ points in the segment $[0,1]$,
and consider them as cycles in $T^2 = \R^2/\Z^2$.  
It can be seen that these lines intersect at heights which are multiples of $1/m_1 \mod \Z$.  
For $k = 0, \dots, m_1 - 1$, take the cycles  $[0,1]\times\{k/m_1\}$, 
each of them intersecting the other straight lines in $|m_2b_2| =
|m_3b_3|$ triple points. In that way, $T^2 $ is divided into triangles, 
as illustrated in Figure \ref{fig:Dimer5} (Left). Then, moving the 
horizontal cycles slightly up, we build the required dimer, as shown in Figure 
\ref{fig:Dimer5}.
 \end{proof}

From this dimer model, constructed from the data of the balancing condition
\eqref{eq:BalCond}, we will now build a smooth surface in $I \times T^2$,
where $I = [-\epsilon,\epsilon]$, with boundary $m_j$ copies of cycles in class $\bbw_j =
\varepsilon_j\bw_j$ in $H_1(I \times T^2; \Z) \cong H_1(T^2; \Z)$, living
at heights $-\epsilon, 0, \epsilon$ for $j = 2,1,3$, respectively.

\begin{figure}[h!]   
  \begin{center} 
 \centerline{\includegraphics[scale= 0.8]{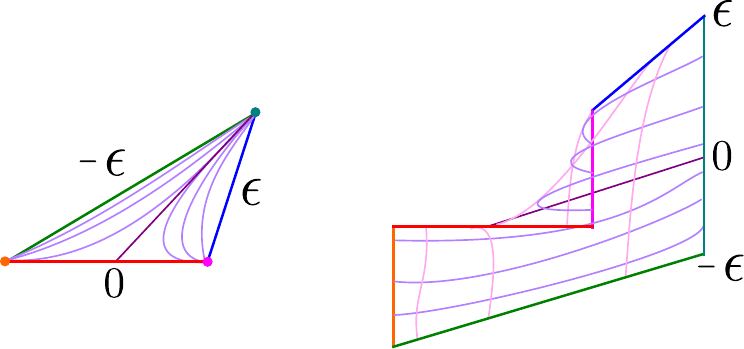}}

\caption{The smooth surface in $I \times T^2$ associated to one of the triangles
in the dimer model of Proposition \ref{prp:Dimer3}. The coamoeba in the 2-torus $T^2$ is depicted on the left, with the described foliation associated to $\bbf$.}
\label{fig:SmoothSurf} 
\end{center} 
\end{figure} 
 
Consider the congruent white triangles and, in each, take a segment from its
vertex given by the intersection of the cycles in the classes $\bbw_2$ and
$\bbw_3$, to the midpoint of the opposite edge. We name the primitive
direction of this segment $\bbf = (\alpha, \beta)$, and abuse notation by calling $\bbf$ the segment itself. Consider then a smooth foliation of the
triangle minus the vertices, so that following it gives an isotopy from the edge
in the $\bw_2$ cycle to half of the edge in the $\bbw_1$ cycle union the segment
$\bbf$, and then from the other half of the edge in the $\bbw_1$ cycle union the
segment $\bbf$ to the the edge in the $\bbw_3$ cycle, as illustrated in Figure \ref{fig:SmoothSurf} (Left). Considering each leaf of the
foliation as level sets of a smooth function $\rho_2$ from the triangle minus
vertices to $I$, its graph embeds into $I \times T^2$, as in Figure \ref{fig:SmoothSurf} (Right). Now, taking a symmetric version
of the foliation and function $\rho_2$ on the black triangles of the dimer model, this ensures that the compactification of the union of the graphs is a smooth surface
in $I \times T^2$, with the desired boundaries.

Note that this embedding can be made symplectic into the region $\mB \times T^2$, 
the symplectic neighborhood of the pre-image of the interior vertex $b$, by embedding the segment $I$ into $\mB$ in an appropriate direction. Nonetheless, 
we can also build our surface to project on the $(p_1,p_2)$ coordinates on $\mB$, much like in the work of Mikhalkin \cite{Mi04b,Mi04a}. 

\begin{figure}[h!]   
  \begin{center} 
 \centerline{\includegraphics[scale= 1.2]{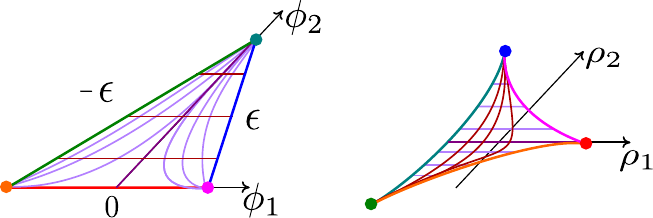}}

\caption{The depiction of a triangular piece of the coamoeba (left) and of the amoeba (right) of a symplectic surface near
the interior vertex.}
\label{fig:Amoeba} 
\end{center} 
\end{figure} 

\begin{remark} For convenience, we shall now perform a change of local coordinates, from $(p_1, \theta_1, p_2, \theta_2)$ 
to new coordinates $(\rho_1, \phi_1, \rho_2, \phi_2)$, where the symplectic form will be
$$dp_1\wedge d\theta_1 + dp_2\wedge d\theta_2 =
d\rho_1\wedge d\phi_1 + d\rho_2\wedge d\phi_2.$$ 
This change of the basis is performed in the $T^2$ chart, going from $\be_1=(1,0)$,
and $\be_2 = (0,1)$, to $\be_1$ and $\bbf = (\alpha, \beta)$. 
This corresponds to setting $\phi_1 = \theta_1 - 
(\alpha/\beta)\cdot\theta_2$, and $\phi_2 = \theta_2/\beta$, 
so $\theta_1 \be_1 + \theta_2 \be_2 = \phi_1 \be_1 + \phi_2 \bbf$. Setting
$\rho_1 = p_1$ and $\rho_2 = \alpha p_1 + \beta p_2$ ensures that  
$dp_1\wedge d\theta_1 + dp_2\wedge d\theta_2 =
d\rho_1\wedge d\phi_1 + d\rho_2\wedge d\phi_2$.
These coordinates $(\rho_1, \phi_1, \rho_2, \phi_2)$ are used in the following proposition.\hfill$\Box$
\end{remark}

We are now ready to construct all the required local models for the symplectic surfaces in Theorem \ref{prp:symp_trop}. (These local models are used in Subsection \ref{subsec:Conn_local_models}.) This is the content of the following proposition:

\begin{proposition} \label{prp:local interior}{\it
 Let $\SC: \Gamma \lr P_X$ be a symplectic-tropical curve in an ATF
 $\pi:X \lr B$, represented by
an ATBD $P_X$, $b$ be an interior vertex and $\mB$ a small 
disk centered at $b$, whose boundary intersects $\SC(\Gamma)$ in
the points $p_1$, $p_2$, $p_3$. Let $\bw_j$ be the associated 
vector corresponding to the edge containing $p_j$, for $1\leq j\leq 3$,
hence satisfying the balancing condition \eqref{eq:BalCond}.

Then there exists a symplectic curve in $X$, projecting to $\mB$,
whose boundary projects to the points $p_j$, and represents 
cycles whose classes in $H_1(\pi^{-1}(p_j);\Z)$ are 
given by $\bbw_j = \varepsilon_j\bw_j$, $1\leq j\leq 3$.}
\end{proposition}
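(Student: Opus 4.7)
The plan is to assemble the local model out of the three ingredients already set up in Subsection \ref{subsec:Local_Int}: the combinatorial dimer from Proposition \ref{prp:Dimer3}, the graph construction over each white/black triangle, and the change of action-angle coordinates adapted to the foliation direction $\bbf$. First, I would identify a neighborhood of $\pi^{-1}(b)$ with $\mB\times T^2$ using Arnold--Liouville coordinates $(p_1,p_2,\theta_1,\theta_2)$ as in Remark \ref{rmk:Symp_Curves}, so that $\omega = dp_1\wedge d\theta_1+dp_2\wedge d\theta_2$ and $b$ sits at $0\in\mB$. Using the balancing condition \eqref{eq:BalCond}, Proposition \ref{prp:Dimer3} yields a dimer in $T^2$ whose zigzag paths are $m_j$ parallel straight cycles in the classes $\bbw_j=\varepsilon_j\bw_j$, $j=1,2,3$. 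This dimer will play the role of the coamoeba of the surface we build.

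Second, over each white triangle of the dimer I would use the smooth foliation (from edge in class $\bbw_2$ to the $\bbw_1$-edge union the segment $\bbf$, and then to the $\bbw_3$-edge) as level sets of a smooth function $\rho_2$ valued in an interval $I=[-\epsilon,\epsilon]$, and build the graph inside $I\times T^2$ as in Figure \ref{fig:SmoothSurf}. On black triangles I take the symmetric graph, so that the two pieces match across every edge of the dimer, producing a closed smooth surface in $I\times T^2$ whose three families of boundary cycles lie in the levels $-\epsilon,0,\epsilon$ and realize $m_2\bbw_2, m_1\bbw_1, m_3\bbw_3$ respectively. The three boundary components will project to three distinct points on an interval in $\mB$; I then bend this interval into a tripod in $\mB$ with legs ending at $p_1,p_2,p_3$ by composing with a smooth embedding of the model into $\mB\times T^2$ in which the $\rho_2$-axis is stretched and rotated to fit the prescribed directions of the three legs (which are proportional to $\bw_j$ in $\mB$).

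Third, I would check symplecticity. After the linear change of variables $\phi_1=\theta_1-(\alpha/\beta)\theta_2$, $\phi_2=\theta_2/\beta$, $\rho_1=p_1$, $\rho_2=\alpha p_1+\beta p_2$ (as in the remark preceding the proposition), the form becomes $d\rho_1\wedge d\phi_1+d\rho_2\wedge d\phi_2$ and the graph over a triangle is exactly the graph of a function $\rho_2=\rho_2(\phi_1,\phi_2)$ with $\rho_1$ a free coordinate that we view as a second independent variable. Since the pullback of $d\rho_1\wedge d\phi_1+d\rho_2\wedge d\phi_2$ to such a graph is $d\rho_1\wedge d\phi_1$ plus a term that vanishes on the graph parametrization, symplecticity reduces to the transversality already imposed by Remark \ref{rmk:Symp_Curves}: the direction $\bbf$ used to build $\rho_2$ must not be proportional to $\bbw_1$, which holds by construction since $\bbf$ was chosen from the triple-intersection vertex transverse to the opposite edge. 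By shrinking $I$ and working in the rescaled coordinates, the closing-up across the dimer edges can be made $C^\infty$ and symplectic simultaneously.

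The main obstacle will be the last step: guaranteeing that, after bending the $\rho_2$-axis into a tripod in $\mB$ pointing along the three primitive directions $\bw_j$, the pieces remain symplectic and smoothly match the cylindrical extensions coming from Proposition \ref{prp:Local_node} and the bending-vertex transport. Condition (v) of Definition \ref{dfn:Symp_comp_graph} is precisely what ensures that the cylinders one builds by transporting the boundary cycles $\partial\sigma_j$ along each leg stay symplectic, so the gluing amounts to interpolating between the tripod model over a small central disk and the three straight cylindrical pieces in an annular region, which is a standard cutoff argument using a partition of unity in the $p$-coordinates together with the fact that symplecticity is an open condition on graphs of smooth functions. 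This produces the required embedded symplectic surface projecting to $\mB$ with the prescribed boundary classes.
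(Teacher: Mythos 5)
Your first two steps follow the paper's plan (construct the dimer from Proposition \ref{prp:Dimer3}, build a graph over each white/black triangle via the foliation associated to $\bbf$), but the symplecticity verification in your third step is not correct, and this is the core of the proof. You write that "the pullback of $d\rho_1\wedge d\phi_1+d\rho_2\wedge d\phi_2$ to such a graph is $d\rho_1\wedge d\phi_1$ plus a term that vanishes on the graph parametrization" while viewing $\rho_1$ as a "second independent variable." These two statements are mutually inconsistent for a two-dimensional surface: if $\rho_2=\rho_2(\phi_1,\phi_2)$ and $\rho_1$ is independent, you have described a three-dimensional set. If what you mean is $\rho_1\equiv\mathrm{const}$ on the surface (which Remark \ref{rmk:Straight_Amoeba} allows), then $d\rho_1\equiv 0$ on the surface and the pullback is $d\rho_2\wedge d\phi_2=\partial_{\phi_1}\rho_2\,d\phi_1\wedge d\phi_2$; so symplecticity does \emph{not} reduce to "$\bbf$ not proportional to $\bbw_1$" but rather to $\partial_{\phi_1}\rho_2\neq 0$, which is a genuine condition on the foliation you built. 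The paper verifies exactly this positivity in display \eqref{eq:Symp_Amoeba} by probing the surface with the two families of curves $\xi^1$ ($\phi_2$-constant) and $\xi^2$ ($\rho_2$-level sets) and computing $\omega(\del\xi^1,\del\xi^2)$ as a sum of two $2\times 2$ determinants, with a carefully prescribed variation of $\rho_1$ and $\rho_2$ over the triangle. Your argument replaces that verification with an incorrect identity, so as written the proposal does not establish that the surface is symplectic.

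The final gluing step is also undersupplied. The paper does not build the surface over an interval and then "bend it into a tripod"; it constructs the amoeba in $(\rho_1,\rho_2)$ directly with tripod shape, arranges the tangencies at the amoeba vertices to be far from orthogonal to the corresponding $\bbw_j$, and then uses the half-planes $\mH_j$ together with condition $(v)$ of Definition \ref{dfn:Symp_comp_graph} to extend each leg to $p_j$ while preserving the never-orthogonal-to-$\bbw_j$ condition that guarantees symplecticity of the cylindrical continuation. A "standard cutoff / partition of unity" interpolation is not an obvious substitute: symplecticity of a surface is indeed an open condition, but an arbitrary interpolation between the central model and the three cylinders need not stay embedded or satisfy the positivity in \eqref{eq:Symp_Amoeba} throughout, and that is precisely the estimate you would have to carry out — which is the content currently missing from your step three as well.
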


\begin{proof}
  
  Figure \ref{fig:Amoeba} illustrates how the embedding will look like in each of the (white) triangles, by
  describing the amoeba and coamoeba together. The interior vertex corresponds to $(\rho_1,\rho_2) = (0,0)$,
  and we vary $\rho_1$ in a small enough interval $[-\delta, \delta]$. First, we associate 
  to the coamoeba in Figure \ref{fig:Amoeba} (left) a smooth surface inside $I\times T^2$, as explained before. (Recall Figure \ref{fig:SmoothSurf}.) This smooth surface will be the projection of the corresponding piece of our symplectic surface into the $(\rho_2,\phi_1,\phi_2)$
  coordinates. Hence, our foliation in the triangle corresponds to the level sets of the coordinate $\rho_2$,
  i.e. $\rho_2$ constant. It is left to us to determine the $\rho_1$ coordinate of each point,  ensuring the symplectic condition. Each edge of the triangle in the coamoeba has constant $(\rho_1,\rho_2)$ coordinates
  corresponding to the vertices of the amoeba. Using the same name for the edges as for their homology
  classes, we choose $\rho_1$ so that $\rho_1(\bbw_3) = \rho_1(\bbw_2) < 0 \le \rho_1(\bbw_1)$. The $\rho_1$
  coordinate decreases as we move along the $\rho_2$ level sets from bottom to top. Also, as we vary $\phi_1$
  positively in the horizontal segments ($\phi_2$ constant) in the triangle, we choose $\rho_1$ so that its
  variation is non-negative for $ \rho_2 \le 0$ and non-positive for $\rho_2 \ge 0$; also the
  $\rho_2$-coordinate varies positively. We also note that the segments in our smooth surface that projected
  to the vertices of the triangle in $T^2$, will project to the boundary of the amoeba in $\mB$.
  
 Now, in order to check the symplectic condition for the above surface, we probe with the following paths $\xi^1$, $\xi^2$. The path $\xi^1$ is given by following the horizontal segment ($\phi_2$ constant)
 in the coamoeba, and $\xi^2$ is given by following the $\rho_2$ level sets
 in the coamoeba (as depicted in Figure \ref{fig:Amoeba} and oriented towards the positive direction of $\phi_2$). Denote by $d\rho_i^j$ and $d\phi_i^j$ the coordinates
 of $d\rho_i$ and $d\phi_i$, along the $j$-th path, for $i,j =1,2$.
 We have that,  
 
 \[ d\rho_1^1 = \begin{cases} \ge 0 &  \text{if} \ \rho_2 \le 0 \\
  \le 0 &  \text{if} \ \rho_2 \ge 0 \end{cases}  \ ,  \ d\phi_1^1 \ge 0\ , \ d\rho_2^1 > 0\ , \  d\phi_2^1 = 0 \ , \]

Note that we oriented the path $\xi^2$ so that $ d\phi_2^2 > 0$. For $\rho_2 < 0$, the path $\xi^2$  goes from the left-most (orange) vertex, to the 
  top (teal) vertex, and hence $d\phi_1^2 > 0$ there. For the $\rho_2 > 0$ part, the path goes
 from the right-most (pink) vertex, to the top (teal) vertex, and thus $d\phi_1^2 < 0$. Note also that the curve in the amoeba picture have $d\rho_1^2 < 0$. In conclusion, we obtain:
    
   \[d\rho_1^2 < 0\ ,  \ d\phi_1^2 = \begin{cases} \ge 0 &  \text{if} \ \rho_2 \le 0 \\
  \le 0 &  \text{if} \ \rho_2 \ge 0 \end{cases} \ , \ d\rho_2^2 = 0\ , \  d\phi_2^2 > 0\ . \]

  In particular, we get that:
  
 \begin{equation} \label{eq:Symp_Amoeba}
    \omega (\del \xi^1, \del \xi^2) = \begin{vmatrix} 
 d\rho_1^1 & d\phi_1^1 \\ d\rho_1^2 & d\phi_1^2 \end{vmatrix}
 +\begin{vmatrix} 
 d\rho_2^1 & d\phi_2^1 \\ d\rho_2^2 & d\phi_2^2 \end{vmatrix} > 0
 \end{equation}
 
as desired, where $\del \xi^i$ is the tangent vector to the $\xi^i$ curve. An analogous embedding is defined for the black triangles, with the
same amoeba image on the $(\rho_1,\rho_2)$ projection, and we obtain a smooth
embedding of the surface as required. 
  
   \begin{rmk} \label{rmk:Straight_Amoeba}
Note that we could simply take $\rho_1 \equiv 0$ over the surface, and it would 
still satisfy \eqref{eq:Symp_Amoeba}. Hence we can indeed get a symplectic 
embedding of our surface over a fixed interval in $\mB$. 
   \end{rmk} 
  
Finally, notice that we chose our amoeba (with small $\delta$) so that the tangency
 of the amoeba at the vertices are far from being orthogonal to the
 corresponding $\bbw_j$ direction. Let $\mH_j$ be the half-plane whose boundary
 line is normal to $\bbw_j$, passes through $b$, and contains the vertex of the
 amoeba corresponding to $\bbw_j$. The symplectic conditions
 \eqref{eq:Symp_Amoeba} and $(v)$ from Definition \ref{dfn:Symp_comp_graph},
 ensure that the path in $\SC(\Gamma)$ associated to $\bbw_j$ is in $\mH_j$.
Hence, we can connect this path to the corresponding $p_j$ via a path still satisfying 
the condition of never being orthogonal to $\bbw_j$.
\end{proof}

%%%%%%%%%%%%%%%%%%%%%%%%%%%%%%%%%%%%%%%%%%%%%%%%%%%%%%%%%%%%%%%%%%%%%%
%%%%%%%%%%%%%%%%%%%%%%%%%%%%%%%%%%%%%%%%%%%%%%%%%%%%%%%%%%%%%%%%%%%%%%

\subsubsection{Connecting the local models} \label{subsec:Conn_local_models}
In the subsection we conclude Theorem \ref{prp:symp_trop} by gluing together the local models provided in Proposition \ref{prp:Local_node} and Proposition \ref{prp:local interior} above.

The local model at an interior vertex
gives us $m_j$ copies of the boundary corresponding to $\bw_j$, equidistant
in our parametrisation of $T^2$. We now follow the path that passes through $p_j$,
as in Proposition \ref{prp:local interior}:

\begin{itemize}
	\item[-] If it hits another interior vertex, we just follow these constant cycles and the surfaces glue naturally.
	\item[-] If it hits a boundary vertex over the boundary of $P_X$, we build the surface	again by keeping the cycles constant, and the cycles smoothly collapses to a point since near the boundary of $P_X$ we have a toric model.
\end{itemize}
 
Now, due to the nature of the boundaries resulting from Proposition \ref{prp:Local_node},  we need to do additional work in order to glue to the local models given by Proposition \ref{prp:Local_node}, as follows (recall Figure \ref{fig:Loc_disks}).

 \begin{figure}[h!]   
  \begin{center} 
 \centerline{\includegraphics[scale= 0.6]{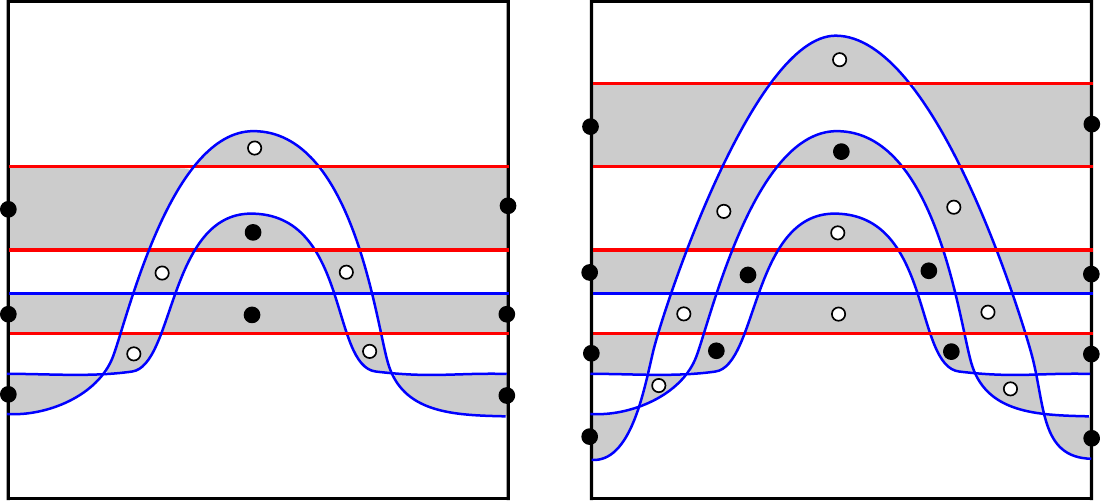}}

\caption{Dimer models for connecting the boundaries of the disks, for the local model of a boundary vertex at a node, to straight cycles.}
\label{fig:Conn_Dimer} 
\end{center} 
\end{figure} 

\begin{figure}[h!]   
  \begin{center} 
 \centerline{\includegraphics[scale= 0.85]{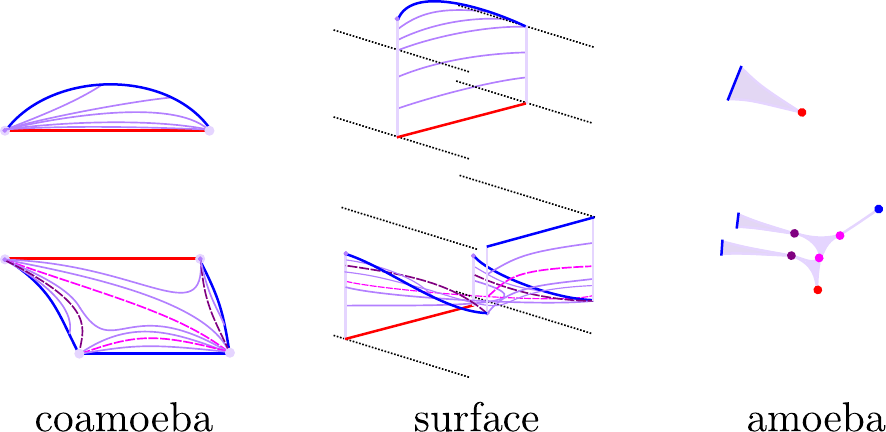}}

\caption{Coamoebas, surfaces and amoebas corresponding to different pieces of the dimer model 
illustrated in Figure \ref{fig:Conn_Dimer}. The 4-gon is subdivided into two 3-gons and two bigons 
indicated by the dashed curves.}
\label{fig:VariousSurf} 
\end{center} 
\end{figure} 

To connect $k$ straight cycles, which we color in red, to $k$ curling cycles, which we color blue -- see again Figure \ref{fig:Loc_disks} -- we consider dimers as the ones
illustrated in Figure \ref{fig:Conn_Dimer}. The figures depicts the cases $k = 3$ and $4$, and readily generalizes for any $k \ge 1$. Independent of $k$, the components
containing the vertices of these dimer models are either a bi-gon, a $3$-gon or
a $4$-gon. The analysis for getting a symplectic embedding for each of these
pieces, especially the $3$-gons and the bi-gons, is similar to the one we made
in the proof of Proposition \ref{prp:local interior}, and we employ the
analogous terminology now.

\begin{remark} The main difference is that the $\rho_2$
coordinate at most of the pieces of the blue cycles is not constant, but this
was not crucial for getting the symplectic curve in Proposition \ref{prp:local
interior}. This makes the boundary of the surface project into segments, rather
than points.\hfill$\Box$
\end{remark}

To replicate the analysis in the proof of Proposition
\ref{prp:local interior} to the $4$-gons, it is better to subdivide it into two 3-gons and two bigons. Figure \ref{fig:VariousSurf} illustrates an example of a coamoeba (with some $\rho_2$ level sets depicted), the corresponding piece of
the smooth surface, and the amoeba. Remark \ref{rmk:Straight_Amoeba}
still holds and we could actually view the amoeba picture as straight segments, 
but the blue boundaries would still project to sub-segments.     

Now, we do not know that the boundaries $\del \sigma_j$, who live in the product of an
interval with $T^2$, project exactly in the pattern as depicted in Figure
\ref{fig:Conn_Dimer}, for $2\leq j\leq k$. The only information we have is
that they are mutually linked and that the projections intersect each horizontal
cycle generically twice. Nevertheless, this is in fact not a problem, since for each one of them we
can draw the actual projection and the corresponding generic curve as in Figure
\ref{fig:Conn_Dimer}.

These curves will generate a dimer, either 
one annulus or two bi-gons. Hence, we can connect them with a smooth symplectic 
surface. The condition $\Psi_k(s)> \Psi_{k-1}(s) > \cdots >
\Psi_2(s)$ we required in the discussion before the statement of 
Proposition \ref{prp:Local_node}, ensures that we can place these surfaces
inside mutually disjoint thickened tori for $2\leq j\leq k$.
This explains how to patch the boundaries $\del \sigma_j$ to straight
cycles for our $(\theta_1,\theta_2)$ coordinates on the torus. 

Finally, the cycles built in Proposition \ref{prp:local interior} are
equidistant in our coordinates for $T^2$, but the straight cycles we just built
are sufficiently close to $\del \sigma_1$. (In this metric sense, Figure
\ref{fig:Conn_Dimer} is misleading for visual purposes.) Using the notation of
Proposition \ref{prp:local interior}, we promptly see that this is not a
problem, since we can move apart these close cycles -- thought to be given by
$\phi_2$ constant -- until their projections to $T^2$ become equidistant, by moving
only in the corresponding $\rho_2$ direction. The symplectic condition
\eqref{eq:Symp_Amoeba} is readily checked, by taking the first curve the
horizontal ones in the coamoeba part, and the second curves vertical ones, so
$d\rho_1^1 = d\rho_1^2 = 0$, $d\rho_2^1 = d\rho_2^2 > 0$, $d\phi_2^2 = 0$. 

 All these connecting surfaces can be made to project into sufficiently small regions in the almost toric fibrations, in particular inside the neighborhood $\mN$
 of our given symplectic-tropical curve $\SC(\Gamma)$. This concludes the proof of Theorem \ref{prp:symp_trop}. \qed

From now on, we will ease notation by calling $\SC$ the symplectic-tropical curve obtained in Theorem \ref{prp:symp_trop} 
from $\SC: \Gamma \to P_X$. At this stage, Theorem \ref{prp:symp_trop} allows us to construct symplectic surfaces $C(\SC)\sse X$ associated to symplectic-tropical curves $\SC\sse B$, for an almost toric fibration $\pi:X\lr B$. The upcoming Subsection \ref{subsec:Combinatorics} shall now address the general combinatorics appearing in the ATBD associated to Del Pezzo surfaces, which are crucial for the construction of the required symplectic-tropical curves used in our proof of Theorem \ref{thm:main} in Section \ref{sec:proofmain}.

%%%%%%%%%%%%%%%%%%%%%%%%%%%%%%%%%%%%%%%%%%%%%%%%%%%%%%%%%%%%%%%%%%%%%%
%%%%%%%%%%%%%%%%%%%%%%%%%%%%%%%%%%%%%%%%%%%%%%%%%%%%%%%%%%%%%%%%%%%%%%

\subsection{Combinatorial background for triangular shaped ATFs} \label{subsec:Combinatorics}

As in \cite{Vianna3} consider the ATBD of triangular shape for a Del Pezzo
surface containing the monotone Lagrangian torus $\Tpqr$ as a visible fibre,
i.e., not inside a cut. This ATBD is related to the Markov type equation:

\begin{equation} \label{eq:Markov}
  n_1p^2 + n_2q^2 + n_3r^2 = G pqr
\end{equation}

where $G = \sqrt{dn_1n_2n_3}$ and $d$ is the degree of the corresponding Del Pezzo. These equations yield the Diophantine equations in Subsection \ref{ssec:ArithmeticSymington}. In \cite{Vianna3}, it is shown that $n_1p^2$, $n_2q^2$ and $n_3r^2$ correspond to the 
determinant of the primitive vectors associated with the corners of the
corresponding ATBD. 

\begin{figure}[h!]   
  \begin{center} 
 \centerline{\includegraphics[scale= 0.8]{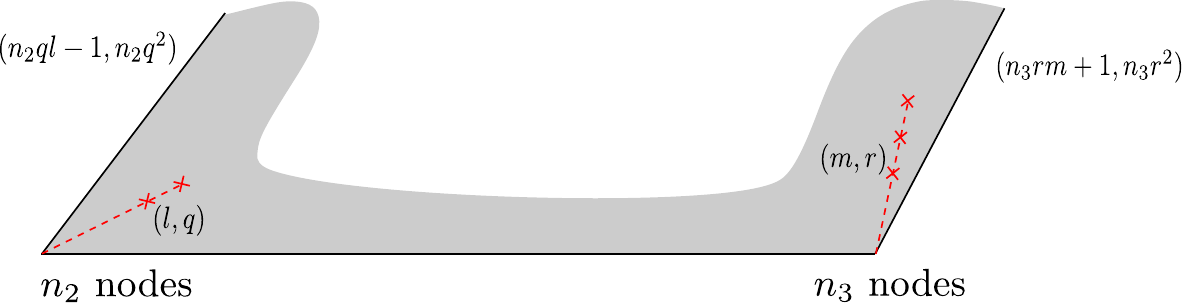}}

\caption{Corners of ATBDs}
\label{fig:CornATBD} 
\end{center} 
\end{figure}

Forcing the common edge of the corners corresponding to $n_2q^2$ and $n_3r^2$ to be 
horizontal, we get that the cuts and the primitive vectors of the remaining 
edges are as illustrated in Figure \ref{fig:CornATBD}, where we name
$(l,q)$ and $(m,r)$ the direction of the cuts -- compare with 
\cite[Figure~13]{Symington} and \cite[Proposition~2.2, Figure~2]{Vianna2}. The condition that the third determinant is $n_1p^2$ then becomes:

\begin{equation} \label{eq:det}
   \begin{vmatrix} n_2ql -1 & n_3rm +1 \\ n_2q^2 & n_3r^2 \end{vmatrix} = n_2n_3r^2ql - n_3r^2 - 
     n_2n_3q^2rm - n_2q^2 = n_1p^2
\end{equation}
This yields the equalities
\[n_2n_3r^2ql - n_2n_3q^2rm = n_3r^2 + n_2q^2 + n_1p^2 = Gpqr \] 
and upon dividing by $n_2n_3qr$, we obtain

\begin{equation} \label{eq:lm}
  rl - qm = \frac{Gp}{n_2n_3}.
\end{equation}

Equation \eqref{eq:lm} is used to build symplectic-tropical curves in the 
$\varepsilon$-neighborhood depicted in Figure \ref{fig:CornATBD}. 

%%%%%%%%%%%%%%%%%%%%%%%%%%%%%%%%%%%%%%%%%%%%%%%%%%%%%%%%%%%%%%%%%%%%%%
%%%%%%%%%%%%%%%%%%%%%%%%%%%%%%%%%%%%%%%%%%%%%%%%%%%%%%%%%%%%%%%%%%%%%%

\subsubsection{Symplectic-Tropical Curves in the Edge Neighborhood $\fN$} \label{subsubsec:Neigh_Edge}

Let $\fB$ be a neighborhood of an edge in an ATBD, containing its associated cuts, as
illustrated in Figure \ref{fig:CornATBD}. Let $\fN$ denote the preimage of $\fB$
in $X$. By studying the combinatorics of the ATF, we show that we can construct
a symplectic-tropical curve (Definition \ref{dfn:Symp_comp_graph}) inside
$\fB$, and by Theorem \ref{prp:symp_trop}, there is a corresponding
symplectic curve in $\fN$. These symplectic curves will have the same
intersection with the anti-canonical divisor as the rational curves highlighted
in Figure \ref{fig:IntroTable}. Their homology classes can differ from the rational curves
listed in $\SH$ (Figure \ref{fig:IntroTable}), by the classes of the Lagrangian spheres projecting in
between two nodes inside $\fN$. In order to obtain them in the desired homology
class, we will need to modify our curves in the pre-image of the neighborhood
of the cuts, containing the Lagrangian spheres. This correction is done in the next
Subsection \ref{subsec:def_symptrop_curves}.

The collapsing cycle corresponding to the node associated with the cut $(l,q)$,
respectively $(m,r)$, is represented by the orthogonal vector $(q,-l)$,
respectively $(-r, m)$. Consistent with Definition \ref{dfn:Symp_comp_graph}
(vii), consider a symplectic-tropical curve with: one interior vertex; 
one leaf going towards one of the nodes with $(q,-l)$ collapsing cycle, 
with multiplicity $r$; another leaf going towards one of the nodes with $(-r, m)$ collapsing cycle,
with multiplicity $q$; and the third going towards the bottom edge, with multiplicity
$\frac{Gp}{n_2n_3}$.  
 
These choices satisfy the balancing condition \eqref{eq:BalCond} of Definition
\ref{dfn:Symp_comp_graph} (ix), since from \eqref{eq:lm}:

\begin{equation} \label{eq:Trop_line}
  r(q,-l) + q(-r,m) + \frac{Gp}{n_2n_3}(0,1) = (0,0).
\end{equation}

Hence we get a symplectic-tropical curve in $\fN$ by Theorem \ref{prp:symp_trop}.

Let us understand the behaviour of a family of such curves as we 
deform our Del Pezzo surface towards the corresponding limit orbifold.
From the proof of \cite[Theorem 4.5]{Vianna3}, and considering both the limit orbifold 
and the limit orbiline $\mA$ corresponding to the limit of the horizontal line in Figure 
\ref{fig:CornATBD}, we get that the intersection of $\mA$ with the anticanonical 
divisor $[\mA] + [\mB] + [\mC]$, where $[\mB]$ and $[\mC]$ are the classes of the other
orbilines, is:

\begin{equation} \label{eq:AntiCan_int}
    [\mA] \cdot ([\mA] + [\mB] + [\mC]) = \frac{n_1p^2(n_3r^2 + n_2q^2 + n_1p^2)}{n_1n_2n_3p^2q^2r^2} 
    = \frac{Gp}{n_2n_3qr}    
\end{equation}

 Note that $qr$ times this number is the one found in Equation \eqref{eq:lm}. Hence, 
 the symplectic-tropical curve we construct limits to an orbi-curve in the 
 class $qr\mA$. In particular, for the symplectic-tropical curve 
 to be a smoothing of that orbiline in the class $\mA$, we must have
 $q=1$ and $r=1$. 
 
% Indeed, by looking at the intersection with 
% the Lagrangian spheres, we will see that the actual smoothing
% (in case $qr=1$) is given by a \emph{deformed} 
% symplectic-tropical curve.  
 
 \subsubsection{Required Symplectic-Tropical Curves in the $\fN$ for Theorem \ref{thm:main}}\label{subsec:RequiredSTC}

Consider the triangular-shaped ATFs of $\CP^2$, $\PxP$, $\BlIII$ and $\BlIV$, 
\cite{Vianna3}
with one smooth corner, i.e., satisfying a Markov type equation of the form

\begin{equation} \label{eq:Markov_1corner}
  1 + n_2q^2 + n_3r^2 = Gqr.
\end{equation}

Let $\fN$ be a neighborhood of the edge opposite the frozen smooth vertex, introduced in Section \ref{subsubsec:Neigh_Edge}. The results of 
the previous subsections yield the following

\begin{thm}\label{wSTCs}
  The homology class of the symplectic divisors highlighted in Figure \ref{fig:IntroTable} can be realized as a symplectic-tropical curve in $\fN$.  
\end{thm}

\begin{proof}
  First note that for the equations associated with $\CP^2$, $\PxP$, $\BlIII$ and $\BlIV$, 
   the quantity $G/n_2n_3$ is respectively, $3$, $2$, $1$, $1$, which is the intersection 
   number of the symplectic divisors in the corresponding spaces with the 
   anti-canonical divisor. Thus, Theorem \ref{prp:symp_trop} and Section \ref{subsubsec:Neigh_Edge}
suffice for the case of $\CP^2$. For the remaining cases, 
   one needs to ensure the correct intersection with the Lagrangian 
   spheres. This can be achieved case by case using that 
   $$1 + n_2q^2 \equiv 0 \mod n_3,\qquad 1 + n_3r^2 \equiv 0 \mod n_2.$$
   For instance, in the case of $\BlIV$, we get 
   $q^2 \equiv -1 \mod 5$, so $q \equiv 2$, or $3 \mod 5$. Take the divisor of Figure \ref{fig:IntroTable} that 
   intersects two Lagrangian spheres. Following the mutations 
   in \cite[Figure~17]{Vianna3}, at the triangular-shape ATBD \cite[Figure~17 ($A_4$)]{Vianna3}, 
   these spheres become the top and bottom spheres, of the 4-chain of Lagrangian spheres.
   If $q \equiv 2 \mod 5$, apply Proposition \ref{prp:Disjoint_S_i} taking the 2
   intersections to the top and bottom node. If $q \equiv 3 \mod 5$, we arrive 
   at the three interior nodes instead, having the same intersection number with 
   the Lagrangian spheres. One can check that the sign is correct by looking at 
   the first instance when $q=2$ and observing that the mutation 
   $q \to 5r - q$ switches between $q \equiv 2 \mod 5$ and $ q \equiv 3 \mod 5$ 
   (the first case $q =2$ is depicted as $A_1$ in Figure \ref{fig:PxP_3Blup}).\end{proof}

%%%%%%%%%%%%%%%%%%%%%%%%%%%%%%%%%%%%%%%%%%%%%%%%%%%%%%%%%%%%%%%%%%%%%%
%%%%%%%%%%%%%%%%%%%%%%%%%%%%%%%%%%%%%%%%%%%%%%%%%%%%%%%%%%%%%%%%%%%%%%
 
%%%%%%%%%%%%%%%%%%%%%%%%%%%%%%%%%%%%%%%%%%%%%%%%%%%%%%%%%%%%%%%%%%%%%%
%%%%%%%%%%%%%%%%%%%%%%%%%%%%%%%%%%%%%%%%%%%%%%%%%%%%%%%%%%%%%%%%%%%%%%

\subsection{Deforming Symplectic-Tropical Curves} \label{subsec:def_symptrop_curves}

The previous subsection and Theorem \ref{prp:symp_trop} provides a symplectic
sphere in the desired homology class, for each of the examples. Nevertheless, the actual intersection number with the
Lagrangian 2-spheres, or between two representatives, is off in the majority of the cases\footnote{Namely, there is an excess
of pairs of intersections with opposite signs.}. In this section we further deform the symplectic-tropical
curves to take into account the required intersection with the Lagrangian 2-spheres. In the next section we develop technical
results to deal with intersections of chains of symplectic spheres, keeping the same intersection with the
Lagrangian spheres.

\color{black}
So, we need to be able to have control of the intersection number of our
symplectic-tropical curve $\SC$ with the Lagrangian 2-spheres that appear naturally for a pair of nodes lying
inside the same cut. For that, let us prove Proposition \ref{prp:Disjoint_S_i}, with the following notation.

Let $\SC: \Gamma \lr P_X$ be a symplectic-tropical curve in an ATF
of $X$, represented by the ATBD $P_X$ and $\mN$ a neighborhood of
$\SC(\Gamma)$, as before. Consider a class of $n$ nodes of the ATBD inside the
same cut, so that at least one of the nodes is in $\SC(\Gamma)$ and let $\mM$
be a neighborhood of the cut. Let $S_1, \dots, S_{n-1}$ be Lagrangian spheres
projecting inside the cut to consecutive segments between the nodes. Also name
$S_0 = S_n =\emptyset$. Let $m$ be the sum of the multiplicities of the leaves
arriving at these $n$ nodes, and choose $d < n$ nodes, where $m \equiv d \mod n$.

\begin{proposition} \label{prp:Disjoint_S_i} {\it 
There is a symplectic curve projecting to $\mN \cup \mM$ with the property
that its intersection with $S_1 \cup \dots \cup S_{n-1}$ consists of exactly $d$ points, each projecting to one of the
$d$ chosen nodes.}\hfill$\Box$
\end{proposition}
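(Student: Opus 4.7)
The strategy is to realize $\SC$ via Theorem~\ref{prp:symp_trop}, concentrating all $m$ collapsing disks from Proposition~\ref{prp:Local_node} at a single node $p_0 \in \SC(\Gamma)$, and then to modify the configuration by two local operations supported in $\mM$: an \emph{elementary slide}, that moves a collapsing disk from node $i$ to node $i+1$ across the Lagrangian sphere $S_i$, and a \emph{cyclic cancellation}, that replaces $n$ disks---one collapsing at each of the $n$ nodes---by a closed symplectic $2$-sphere disjoint from $S_1 \cup \cdots \cup S_{n-1}$, which may then be discarded. Since each cyclic cancellation reduces the total disk count by $n$ while leaving $\bigcup_j S_j$ untouched, and elementary slides merely permute the nodes where the remaining disks terminate, the congruence $m \equiv d \pmod{n}$ is exactly what is needed to reach a final configuration with one disk at each of the $d$ prescribed nodes.

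First, I would construct the elementary slide in a Weinstein neighborhood of $S_i \cong T^*S^2$: a symplectic disk whose boundary is the vanishing cycle at node $i$ can be symplectically isotoped across $S_i$ into a symplectic disk whose boundary is the vanishing cycle at node $i+1$, meeting $S_i$ in exactly one interior point; pairing two oppositely-oriented slides then cancels this residual interior intersection. Second, for the cyclic cancellation I would exhibit an explicit symplectic $2$-sphere in the rational-homology-ball filling of the cut neighborhood (a symplectic filling of the lens space $L(n^2, nm'-1)$ as in Subsection~\ref{ssec:transverse}) that caps off the union of $n$ collapsing disks and can be isotoped off the Lagrangian skeleton.

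To conclude, given $m = qn + d$, I would apply the cyclic cancellation $q$ times to the initial configuration $(m,0,\ldots,0)$, and then use elementary slides to carry the remaining $d$ disks to the $d$ prescribed nodes. The resulting symplectic curve projects to $\mN \cup \mM$, agrees with the original curve outside $\mM$, and intersects $S_1 \cup \cdots \cup S_{n-1}$ precisely at the $d$ chosen nodes, where the collapsing endpoints of the surviving disks sit.

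The hard part will be the cyclic cancellation: producing an explicit symplectic sphere that caps $n$ collapsing disks and is disjoint from the chain $S_1 \cup \cdots \cup S_{n-1}$. This step depends on a careful analysis of the symplectic filling of $L(n^2, \cdot)$ by the cut neighborhood and on identifying a closed symplectic curve realizing the class that makes the $n$-disk configuration null-homologous relative to the Lagrangian skeleton. The elementary slide is more routine, but still requires verifying that the local modification extends to a global symplectic deformation of the ambient curve, leaving the portions outside $\mM$ unchanged.
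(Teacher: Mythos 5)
Your high-level decomposition into ``elementary slides'' plus a ``cyclic cancellation'' captures the homological bookkeeping correctly — the signed intersection of the $kn$ extra disks with each $S_i$ vanishes, and $m\equiv d \pmod n$ is exactly what makes the count work — but the ``cyclic cancellation'' step as you describe it has a genuine gap. You propose to replace $n$ collapsing disks (one at each node) by a closed symplectic $2$-sphere ``which may then be discarded,'' reducing the total disk count by $n$. This breaks the boundary conditions: the total multiplicity $m$ of leaves arriving at the cut is fixed by the symplectic-tropical curve $\SC$, and \emph{all} $m$ boundary cycles at the torus boundary of $\mM$ must be capped inside $\mM$ so that the local model glues to the rest of the curve built in Theorem~\ref{prp:symp_trop}. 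If you discard a closed sphere you have removed $n$ of those boundary cycles, and the resulting surface no longer matches the data coming from $\mN$. What the paper does instead is \emph{not} to cap off and discard, but to merge the $n$ extra disks (one per node) into a single connected symplectic surface $\Sigma'^{1,n}_j$ (Lemmas~\ref{lem:Sigma}, \ref{lem:Sigma_prime}, \ref{lem:Sigma_1n}) whose boundary still lies in a thickened torus but now wraps $n$ times around the collapsing class — so it still contributes $n$ to the boundary multiplicity — while being disjoint from $S_1\cup\cdots\cup S_{n-1}$. Running this for $j=1,\dots,k$ and adding $d$ ordinary disks at the chosen nodes accounts for all $m=kn+d$ boundary cycles.

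A second issue: you treat the elementary slide and the resulting cancellation of the ``residual interior intersection'' as routine, but this is in fact where most of the work of the paper's proof lives. Two disks passing through $S_i$ with opposite local intersection signs do not automatically cancel; one must first join them along their boundaries into a pair of pants $\mP$ (Lemma~\ref{lem:Sigma}), and then perform an explicit amoeba/coamoeba deformation to a pair of pants $\mP'$ disjoint from $S_i$ (Lemma~\ref{lem:Sigma_prime}), checking the symplectic condition $\omega(\partial\xi^1,\partial\xi^2)>0$ region by region on the dimer-model pieces. Without that construction, ``pairwise cancel these intersections'' is only a smooth-topological statement, not a symplectic one. Your starting configuration (all $m$ disks concentrated at one node $p_0$) also differs from the paper's, which distributes $k$ extra disks at each of the $n$ nodes precisely so that the signed-cancellation structure is visible node by node; you would need to justify the rerouting of all leaves to a single node within the STC formalism before your induction can start.
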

 \color{black}
The reminder of this subsection is devoted to the proof of Proposition \ref{prp:Disjoint_S_i}. The idea is that, since $m -d = kn$, we can place disks $\sigma_j$ as in Proposition 
\ref{prp:Local_node}, for $2\leq j\leq k+1$ at all the $n$ nodes.                                                    
Naming $n_1, \dots, n_n$ be the $n$ nodes in a cut, in order, and
 $S_i$ a Lagrangian sphere projecting to the cut between the consecutive nodes
 $n_i$, $n_{i+1}$, we see that the signed intersection of $S_i$ with the
 collection of $kn$ disks is zero, since the sphere $S_i$ intersects the $k$ disks only around the $n_i$ and $n_{i+1}$ nodes, with opposite signs. Thus, it is clear that, 
 at least smoothly, we can pairwise cancel these intersections. Nevertheless, before that, we need
 to connect the disks.   
 
Let us focus now on how to construct in Lemma \ref{lem:Sigma} a symplectic pair of pants $\mP$ that we can glue to the boundary of a pair of symplectic 2-disks $\sigma_j$ near the $n_i$ and $n_{i+1}$ nodes.
 
\begin{figure}[h!]   
  \begin{center} 
 \centerline{\includegraphics[scale= 0.8]{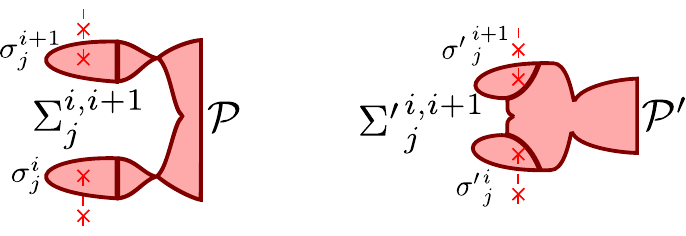}}

\caption{Amoebas of the surfaces $\Sigma_j^{i,i+1}$, ${\Sigma'}_j^{i,i+1}$.}
\label{fig:Sigma^i,i+1} 
\end{center} 
\end{figure} 

\begin{lem} \label{lem:Sigma}
There is a symplectic disk $\Sigma_j^{i,i+1}$ in the cut neighborhood $\mM$, containing the two symplectic 2-disks $\sigma^i_j$ 
  and $\sigma^{i+1}_j$ as a subset, with boundary in a thickened 2-torus $\mathbb{T}$, and whose class is twice the collapsing class of the nodes
  via the identification $H_1(\mathbb{T};\Z) \cong H_1(T^2;\Z)$.\\
  
  Denote by $\mP$ the pair of pants 
  which is the closure of $\Sigma_j^{i,i+1} \setminus(\sigma^i_j \cup \sigma^{i+1}_j)$.\hfill$\Box$\end{lem}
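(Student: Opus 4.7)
The plan is to construct $\Sigma_j^{i,i+1}$ as the union $\sigma_j^i \cup \mP \cup \sigma_j^{i+1}$, where $\mP$ is a symplectic pair of pants lying in $\mM$ whose three boundary components are the two input circles $\partial \sigma_j^i,\partial\sigma_j^{i+1}$ (each of class $\alpha$, the common collapsing cycle) together with a single outgoing circle of class $2\alpha$ lying in a thickened torus $\mathbb{T}$. Topologically a disk glued to a pair of pants glued to a disk is again a disk, and in homology $\alpha+\alpha=2\alpha$, so the boundary class of $\Sigma_j^{i,i+1}$ comes out correctly. The work is to produce $\mP$ symplectically.

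First, since $n_i$ and $n_{i+1}$ lie on the same cut and share the collapsing direction $\alpha$, the open segment of the cut between them consists of regular values of $\pi$, and $\alpha$ extends as a well-defined homology class in the fibres above this segment (the monodromy shear fixes $\alpha$). Pick a point $p$ midway between $n_i$ and $n_{i+1}$ and let $\mathbb{T}_0=\pi^{-1}(p)$. By Remark \ref{rmk:Symp_Curves}, each of the boundaries $\partial\sigma_j^i,\partial\sigma_j^{i+1}$ can be transported by a symplectic cylinder to $\mathbb{T}_0$, producing two disjoint embedded cycles $c_1,c_2\subset\mathbb{T}_0$ in class $\alpha$; the cylinders are chosen to have disjoint images inside $\mM$.

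Next, work in a thickened-torus neighborhood $\mathbb{T}$ of $\mathbb{T}_0$ with local action-angle coordinates $(\rho,\phi,\theta)$ in which $\alpha$ is the $\theta$-direction and the symplectic form restricts to $d\rho\wedge d\phi + d(\mathrm{action})\wedge d\theta$. Place $c_1$ and $c_2$ at parameters $(\rho,\phi)=(-\varepsilon,\pm\phi_0)$, and realize a candidate output curve $c_3$ of class $2\alpha$ as an embedded double-cover loop that wobbles slightly in $\rho$ near $\rho=+\varepsilon$ in order to remain embedded in $\mathbb{T}$. Then $\mP$ is built explicitly as a smooth surface whose $(\rho,\phi)$-amoeba is a pair of pants in the $(\rho,\phi)$-plane and whose fibre over a generic amoeba point is one or two parallel copies of the $\theta$-circle, interpolating between two $\theta$-circles at $\rho=-\varepsilon$ and a single doubly-wound $\theta$-curve at $\rho=+\varepsilon$. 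This is essentially a degenerate version of the coamoeba construction of Proposition \ref{prp:local interior}; although Proposition \ref{prp:Dimer3} does not directly apply because the three classes are parallel, the $(\rho,\phi)$-parametrization can be written down by hand, and the symplectic condition is verified by the same monotonicity argument as in \eqref{eq:Symp_Amoeba}, probing along the $\rho$-constant and $\phi$-constant level curves in the amoeba.

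The main obstacle is gluing $\mP$ smoothly and symplectically to the preexisting disks $\sigma_j^i,\sigma_j^{i+1}$. As in Subsection \ref{subsec:Conn_local_models}, the cycles $\partial\sigma_j^\bullet$ coming out of Proposition \ref{prp:Local_node} are only close to the standard collapsing cycle, and their projections to the ATBD are curves rather than the straight segments appearing in the pair-of-pants model, so a short connecting cobordism -- built from bigons and triangles exactly as in Figures \ref{fig:Conn_Dimer}--\ref{fig:VariousSurf} -- must be inserted between $\partial\sigma_j^\bullet$ and the standardized cycles $c_1,c_2$. The $C^1$-smallness of the bump functions $\Psi_k>\cdots>\Psi_2$ chosen in Subsection \ref{subsec:Local_Nodes} ensures enough room in $\mM$ to carry out this interpolation inside mutually disjoint thickened tori for the different indices $j$. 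Assembling $\sigma_j^i$, the connecting annuli, the pair of pants $\mP$, and $\sigma_j^{i+1}$ then yields the desired symplectic disk $\Sigma_j^{i,i+1}\subset\mM$ with boundary in $\mathbb{T}$ of class $2\alpha$, completing the proof.
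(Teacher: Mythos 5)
Your overall decomposition $\Sigma_j^{i,i+1}=\sigma_j^i\cup\mP\cup\sigma_j^{i+1}$ and the Euler-characteristic/homology bookkeeping are correct, and you correctly identify that some interpolation between the curling boundaries $\del\sigma_j^\bullet$ and standardized cycles is needed. However, your route is genuinely different from the paper's and contains a gap at precisely the place where you defer the work.

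You first transport $\del\sigma_j^i,\del\sigma_j^{i+1}$ to straight circles $c_1,c_2$ in a fixed fibre $\mathbb T_0$, and then try to cap them with a pair of pants whose three boundary classes are $\alpha,\alpha,2\alpha$, all parallel. As you yourself note, Proposition~\ref{prp:Dimer3} and the local model of Proposition~\ref{prp:local interior} do \emph{not} apply here: they require a non-degenerate balancing condition (three pairwise non-parallel primitive classes), and the resulting coamoeba is degenerate when everything is a multiple of $\alpha$. The claim that ``the $(\rho,\phi)$-parametrization can be written down by hand, and the symplectic condition is verified by the same monotonicity argument as in \eqref{eq:Symp_Amoeba}'' is exactly the nontrivial content of the lemma, and you do not supply it; nor is it a routine corollary of the monotonicity calculation, since that calculation is set up for a genuinely two-dimensional amoeba, whereas here the base projection degenerates (all three legs must lie in directions not orthogonal to $\alpha$). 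There is also a coordinate confusion: $(\rho,\phi)$ mixes a base coordinate with an angle coordinate, and ``the fibre over a generic amoeba point is one or two parallel copies of the $\theta$-circle'' would make the surface $3$-dimensional if read literally.

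The paper sidesteps the parallel-class degeneracy by \emph{not} straightening $\del\sigma_j^i,\del\sigma_j^{i+1}$ first. Precisely because these boundaries link $\del\sigma_1$ once (Figure~\ref{fig:Loc_disks}), their coamoeba projections are nontrivial closed curves, and after phasing out the two collapsing cycles slightly in $\theta_2$, the blue curves and a suitably chosen red curve (of class $2\alpha$) generate an honest dimer model on $T^2$ with crossings -- two bi-gons and two tri-gons (Figure~\ref{fig:Dimer_Sigma}). The substance of the proof is then the careful $(p_1,p_2)$-profile analysis of the red curve (Figure~\ref{fig:p2_profile}), the introduction of the pink curve tracking the bi-gon vertices, and the case-by-case verification of \eqref{eq:Symp_Amoeba} on the bi-gon and tri-gon pieces. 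None of that appears in your proposal. In short: your plan is plausible in outline, but you have traded the paper's non-degenerate dimer for a degenerate one, and the degenerate case is exactly the step you leave unproven.
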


Figure \ref{fig:Sigma^i,i+1} shows on the left the amoeba corresponding to the $\Sigma_j^{i,i+1}$ surface. 
The idea is to then isotope it to a different surface  ${\Sigma'}_j^{i,i+1}$ with boundary still in 
$I\times T^2$, such that the intersection with the Lagrangian 2-sphere $S_i$ is empty.  

% Moreover, there is another symplectic disk
 % ${\Sigma'}_j^{i,i+1}$, symplectic homotopic relative to the boundary to $\Sigma_j^{i,i+1}$, such that
%  the intersection with $S_i$ is empty.    

\begin{figure}[h!]   
  \begin{center} 
 \begin{minipage}{0.6\textwidth}
 \centerline{ \hspace{0.3cm} \includegraphics[scale= 0.7]{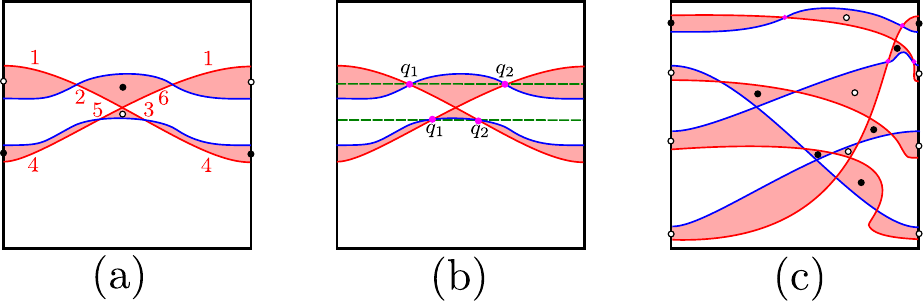}}
\end{minipage}
\hspace{0.4cm}
\begin{minipage}{0.3\textwidth}
\centerline{\hspace{0.8cm} \includegraphics[scale=0.7]{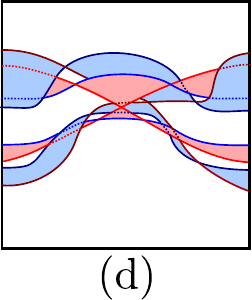}}
\end{minipage}

\caption{(a) A dimer model for the symplectic pair of pants $\mP$.
(b) Dimer model for $\mP'$, illustrating the cycles in the Lagrangian 
sphere $S_i$, over the intesection between the boundary of the amoeba of $\mP'$
and the amoeba of $S_i$ (a segment between the nodes). (c)
The dimer model for the part of the surface ${\Sigma'}_j^{i,i+3}$ connecting the 
boundary of the $\sigma_j$ disk in the
$n_{i+3}$ node with the boundary of ${\Sigma'}_j^{i,i+2}$. (d) Coameaba of 2 non-intersecting 
$\mP'$s. The suggested height is with respect to the $p_2$ coordinate.}
\label{fig:Dimer_Sigma} 
\end{center} 
\end{figure}

\begin{proof}[Proof of Lemma \ref{lem:Sigma}]
  Let us assume the cut is vertical, and thus the collapsing cycle is given by a 
  $\theta_2$-constant curve in the $(\theta_1,\theta_2)$ coordinates of $T^2$.  
  Making a change of action-angle coordinates, if needed, we may assume that 
  the collapsing cycles corresponding to the $n_i$ nodes are slightly phased-out. We thus draw the $T^2$ projection of the boundary of the 2-disks $\sigma_j$, 
  recalling that each of them links once the horizontal cycle in a thickened torus that they 
  live in (recall the construction before Proposition \ref{prp:Local_node}). 
  We color these boundaries blue, and draw a dimer model that indicates the 
  coamoeba of the pair of pants $\mP$ we will construct, as illustrated in the
  first picture of Figure \ref{fig:Dimer_Sigma}. We will color the
  other boundary of $\mP$ red. We number the components of the red curve
  in the dimer model of Figure \ref{fig:Dimer_Sigma} (Left)
  as indicated, and sketch the profile of its $p_2$ coordinate as indicated in Figure
  \ref{fig:p2_profile}.  
  
  The curves $\del \sigma_j$ can be taken sufficiently close
  to the collapsing cycle $\del \sigma_1$, so we assume its $\theta_2$ variation is
  small enough with respect to the difference $p_1$ between the coordinate 
  of the red curve and the $p_1$ coordinate of the blue curve. (Essentially,
  we take them small with respect to the size of $\mM$.) 
  
  \begin{figure}[h!]   
  \begin{center} 
 \centerline{\includegraphics[scale= 0.6]{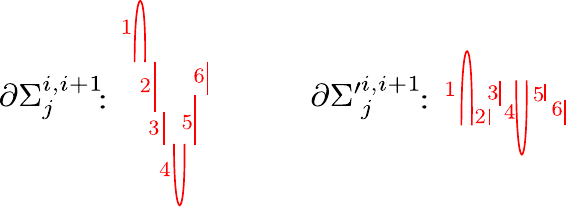}}

\caption{The profile of the $p_2$ coordinate as we move along $\del \Sigma_j^{i,i+1}$, $\del {\Sigma'}_j^{i,i+1}$.
The numbering corresponds to the one on the leftmost picture of Figure \ref{fig:Dimer_Sigma}.}
\label{fig:p2_profile} 
\end{center} 
\end{figure}
   
The dimer model we are considering consists of two bi-gons and two 
  tri-gons. We carefully analyze the $(p_1,p_2)$-coordinates we associate 
  to these pieces, to ensure that we get a symplectic pair of paints $\mP$
  connecting the blue boundaries to the red one. The corresponding amoebas, 
  together with the $\theta_1$- and $\theta_2$-level sets, are indicated in Figure \ref{fig:Def_Amoeba} (Middle). There is a curve in the 
  amoeba corresponding to the vertices of the bi-gon that will be crucial 
  in the further analysis. This curve shall be named the
  {\it pink} curve, and in this case it is a horizontal segment.

  \begin{figure}[h!]   
  \begin{center} 
 \centerline{\includegraphics[scale= 0.6]{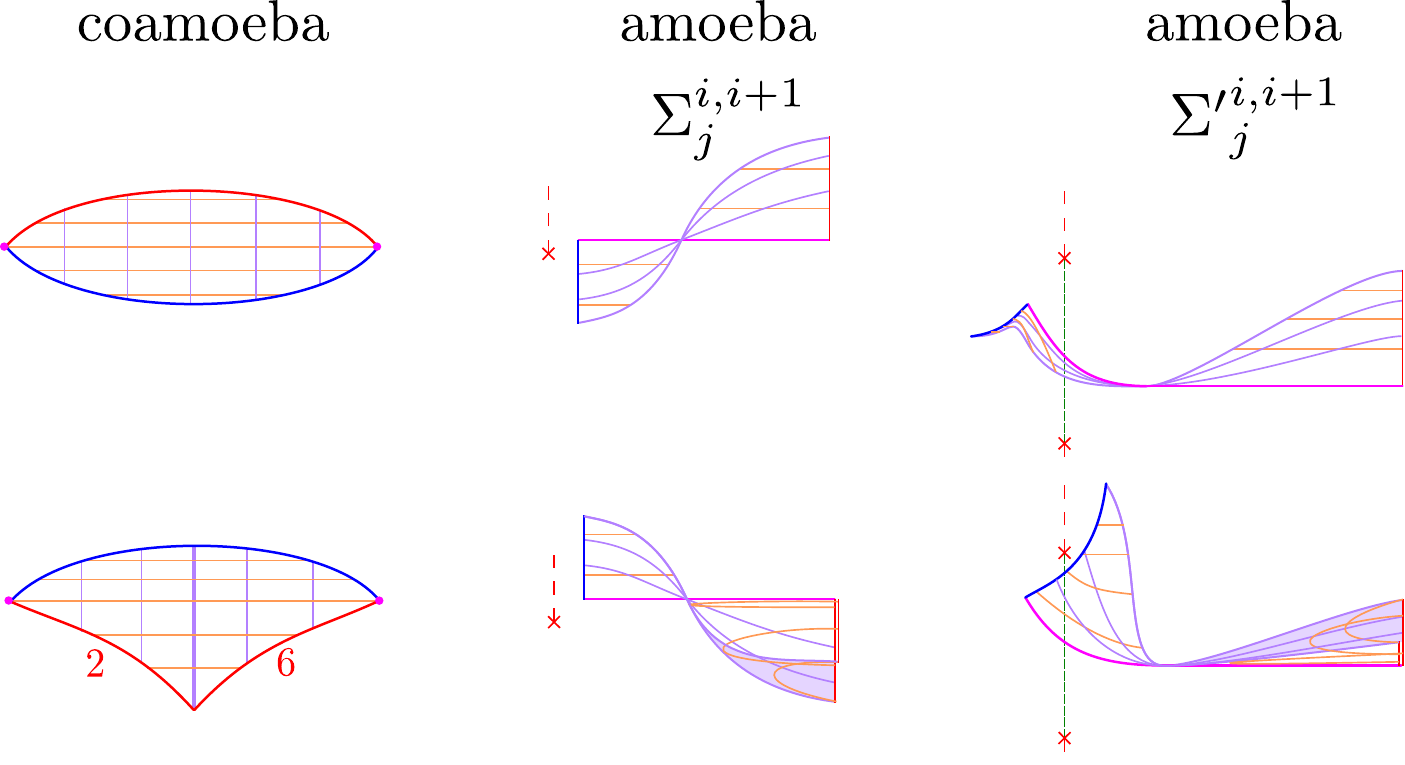}}

\caption{Amoebas of the  pieces of the pair of pants $\mP$ and $\mP'$
for the surfaces $\Sigma_j^{i,i+1}$, ${\Sigma'}_j^{i,i+1}$. }
\label{fig:Def_Amoeba} 
\end{center} 
\end{figure}
  
  Following analogous conventions for the analysis of amoebas and coamoebas as in
  the proof of Proposition \ref{prp:local interior}, with $\rho_i$ corresponding
  to $p_i$ and $\phi_i$ to $\theta_i$, we take the first curve $\xi^1$
  corresponding to a $\theta_2$-level set, oriented with $d\theta_1^1 >0$ and
  the second curve $\xi^2$ corresponding to a $\theta_1$-level set, with the
  orientation such that the $p_1$ coordinate is decreasing, i.e., $dp_1^2 < 0$. 

  First, let us study the bi-gon pieces. We assume that the $\theta_2$ variation is sufficiently small for the red curve as well, so if we fix the norm of $dp^2$ as we
  travel, we then have that $d\theta_2^2$ is small enough away from the vertices. As
  indicated in Figure \ref{fig:Def_Amoeba}, we choose $\xi^1$ so that $dp_2^1 =
  0$ and $d\theta_2^1 = 0$. As we move along $\xi^1$ at the bottom of the
  bi-gon, the $p_1$ coordinate increases up to the middle of the horizontal
  curve in the amoeba and decreases from the middle until the end. The logic is reversed as we move to the top of the bi-gon. So, $\omega( \del \xi^1, \del
  \xi^2) = - d \theta_1^1 dp_1^2 > 0$ (recall equation \eqref{eq:Symp_Amoeba}),
  as desired. The limit case where $\theta_2$ attains the maximum and the
  minimum in the bi-gon can be analyzed by replacing the $\xi^1$ curve by the
  respective red and blue curves. Since it also corresponds to maximum and
  minimum of the $p_2$ coordinate, we still have $dp_2^1 = 0$ and $d\theta_2^1 =
  0$, ensuring positivity of $\omega( \del \xi^1, \del \xi^2)$. 
   
  Second, we now turn our attention to the tri-gon, as depicted in Figure
  \ref{fig:Def_Amoeba}. We first notice that the $p_2$ coordinates associated
  with the bottom vertex of the tri-gon, must be different for the edge labeled
  $2$ and the edge labeled $6$, the latter being greater than the former, recall
  Figures \ref{fig:Dimer_Sigma} and \ref{fig:p2_profile}. This implies that 
  we must not only consider one curve with that corresponding $\theta_2$-coordinate constant, 
  but rather a family of curves. The shaded region in the bottom-middle picture
  of Figure \ref{fig:Def_Amoeba} indicates the image of these curves under
  the $(p_1,p_2)$-projection. The analysis at the top of this part of 
  the coamoeba is similar to before, with $dp_2^1 =
  0$ and $d\theta_2^1 = 0$ and $\omega(
  \del \xi^1, \del \xi^2) = - d \theta_1^1 dp_1^2 > 0$. (Note that we do not remain 
  stationary with $d\theta_1^1 = 0$ at the top part of the coamoeba.) 
  At the bottom part, we do have $dp_2^1 > 0$ as $\xi^1$ travels 
  from the $2$ curve to the $6$ curve and we also have $d\theta_2^2 > 0$, 
  as we are traveling from the red curve to the blue curve along $\xi^2$, 
  with $dp_1^2 < 0$. That way, since $d\theta_1^1 \ge 0$, we have
  
  \[\omega(
  \del \xi^1, \del \xi^2) = - d \theta_1^1 dp_1^2 + d \theta_2^2 dp_2^1 > 0.\]
   
  Gluing analogous models for the bottom part of the diagram in a 
  consistent  way provides our symplectic pair of pants.\end{proof}

We now move towards the construction of the surface ${\Sigma'}_j^{i,i+1}$ that
does not intersect the Lagrangian sphere $S_i$. Given our choice of coordinates,
the Lagrangian sphere projects to a vertical segment in the $(p_1,p_2)$
factor. In order to keep working with the same coordinates in the region we want
to do the modification of our curve, we modify the ATBD, without changing our
ATF, simply by reversing the direction of the cuts associated with the nodes
$n_{i+1}, \dots, n_n$ (this operation was named \emph{transferring the cut} in
\cite{Vianna2}). Note that we already made this operation in Figures
\ref{fig:Sigma^i,i+1}, \ref{fig:Def_Amoeba}. The rightmost diagrams of
Figure \ref{fig:Def_Amoeba} represent the projection of $S_i$ by a vertical dashed segment,
which is different than the one we use to represent the cuts. Let $(p_1,p_2) = (0,0)$ be the midpoint of this dashed segment.

From Remark \ref{rmk:disjoint}, we see that $\sigma^i_j$ and $\sigma^{i+1}_j$
both intersect $S_i$ once, and with opposite signs. We
consider disks ${\sigma'}^i_j$ and ${\sigma'}^{i+1}_j$, obtained from $\sigma^i_j$
and $\sigma^{i+1}_j$ by carving out a neighborhood of the intersection point with
$S_i$, with their $(p_1,p_2)$ projection as illustrated in Figure \ref{fig:Sigma^i,i+1} (Right). We keep coloring their boundary blue, and their
coamoeba projection is similar to the ones we just analyzed, as it is sufficiently close to
the corresponding collapsing cycle with constant $\theta_2$-coordinate. Thus, we
can also build a dimer model as in Figure
\ref{fig:Dimer_Sigma} (Left), to build a pair of paints $\mP'$, and we also color the
other boundary of $\mP'$ red.

Recall that we named a pink curve, and the $(p_1,p_2)$ image of the vertices corresponding to the intersection
of the blue and red curves in the coamoeba coordinates $(\theta_1,\theta_2)$, which we will refer to as pink
vertices. The pink curves and the pink vertices will play the following role in our construction. As before,
we focus on the top part of the coamoeba, the bottom part being symmetric under the reflection around the $p_2
= 0$ coordinate (or at least having a symmetric behaviour, since our coamoeba picture is not symmetric.) Then
the pink curve corresponding to the top bi-gon will be the graph of a non-increasing convex function
$p_2(p_1)$, starting at a point with $p_1 <0$, $p_2 >0$, becoming negative before $p_1$ becomes $0$, and
eventually becoming constant at some point where $p_1>0$. Let us call $x$ the endpoint in the $p_2$-constant
segment, with smallest $p_1$-coordinate. Since the pink curve of the bottom bi-gon is the
reflection around the $p_2 = 0$ of the pink curve for the top bi-gon, these curves will intersect at some point with
negative $p_1$ coordinate. Also, the pink curve of the bottom bi-gon ends at a segment with positive constant
$p_2$ coordinate. For that to happen, the $p_2$-coordinate of the red curve needs to move
different as we move along the different parts of the coamoeba, as indicated in Figure \ref{fig:p2_profile}. Note that, as illustrated in Figure \ref{fig:p2_profile}, we maintain the property
that the $p_2$-coordinate at the common point of the segments labeled $2$ and $3$ is smaller than the one
corresponding to the segments $5$ and $6$. We also need to ensure the following. Consider the point of
intersection between the $(p_1,p_2)$ projection of the top blue curve and the projection of $S_i$. Look at the
$\theta_2$-coordinate of the circle of $S_i$ over this point, and consider its intersections $q_1$, $q_2$ with
the blue curve. We draw the coamoeba profile of the red curve so that $q_1$, $q_2$ are precisely the pink
vertices, as illustrated in Figure \ref{fig:Dimer_Sigma} (Center). We make analogous choices for drawing the
red curves at the bottom of the coamoeba.

 We are now in shape to prove the following lemma.

\begin{lem} \label{lem:Sigma_prime}There is a symplectic disk ${\Sigma'}_j^{i,i+1}$ in the cut neighborhood $\mM$,
disjoint from the Lagrangian 2-sphere $S_i$, containing the symplectic 2-disks ${\sigma'}^i_j$ and ${\sigma'}^{i+1}_j$ as a subset, with boundary in a thickened torus $\mathbb{T}$, whose homology class is twice the collapsing class of the nodes via the identification $H_1(\mathbb{T};\Z) \cong
H_1(T^2;\Z)$.

Denote by $\mP'$ the pair of pants which is the closure of
${\Sigma'}_j^{i,i+1} \setminus({\sigma'}^i_j \cup {\sigma'}^{i+1}_j)$. 
\end{lem}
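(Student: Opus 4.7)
The plan is to adapt the construction of Lemma \ref{lem:Sigma} by steering the amoeba of the new pair of pants $\mP'$ around the vertical segment that is the projection of $S_i$, while preserving the coamoeba structure. The starting data are the carved disks ${\sigma'}^i_j$ and ${\sigma'}^{i+1}_j$: by Remark \ref{rmk:disjoint}, each of $\sigma^i_j$ and $\sigma^{i+1}_j$ meets $S_i$ in a single point and with opposite signs, so excising a small neighbourhood of each intersection leaves two symplectic disks whose blue boundary cycles lie in a thickened torus $\mathbb{T}$ disjoint from $S_i$.

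First, I will overlay a red cycle on the blue boundaries of ${\sigma'}^i_j$ and ${\sigma'}^{i+1}_j$ in $T^2$ so that, together, they bound a coamoeba made of two bigons and two trigons, exactly as in the dimer of Figure \ref{fig:Dimer_Sigma} (Center). Thus, combinatorially, the coamoeba of $\mP'$ has the same structure as that of $\mP$; what must change is the amoeba. I choose the pink curve of the top bigon as the graph of a non-increasing convex function $p_2(p_1)$ that starts at some $(p_1<0,\,p_2>0)$, drops past $p_2=0$ while still $p_1<0$, and becomes constant past some $p_1>0$, with a symmetric prescription below. This forces the amoeba of $\mP'$ to bow around the origin and therefore off of the vertical segment where $S_i$ projects; the $p_2$-profile of the red segments labelled $2,3,5,6$ of Figure \ref{fig:p2_profile} is tuned so that the pink vertex between $2$ and $3$ has smaller $p_2$ than the one between $5$ and $6$.

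The delicate point, which I expect to be the main obstacle, is that the pink vertices must be positioned precisely at the points $q_1,q_2$ where the $\theta_2$-circle of $S_i$ over the intersection point of the blue amoeba with the projection of $S_i$ meets the blue boundary. This pins down the $(\theta_1,\theta_2)$ positions of the red/blue intersections at the bigon corners, and it is this matching that ultimately ensures $\mP'\cap S_i=\varnothing$. The challenge is to show that the three simultaneous constraints -- avoidance of $S_i$, the pink-vertex matching, and the symplectic condition -- leave enough freedom at each of the four pieces. For the symplectic check I reuse the probe curves from the proof of Lemma \ref{lem:Sigma}: $\xi^1$ a $\theta_2$-level set (or the red/blue boundary at a $\theta_2$-extremum) and $\xi^2$ a $\theta_1$-level set with $dp_1^2<0$. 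On the bigons, $dp_2^1=d\theta_2^1=0$ and $\omega(\partial\xi^1,\partial\xi^2)=-d\theta_1^1\,dp_1^2>0$. On the trigons, $\xi^1$ sweeps a shaded family of curves, $d\theta_2^2>0$, $dp_2^1>0$, and $\omega(\partial\xi^1,\partial\xi^2)=-d\theta_1^1\,dp_1^2+d\theta_2^2\,dp_2^1>0$.

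Finally, gluing ${\sigma'}^i_j$ and ${\sigma'}^{i+1}_j$ along the blue boundary of $\mP'$ produces ${\Sigma'}_j^{i,i+1}$. By construction it is a smooth symplectic disk contained in the cut neighbourhood $\mM$, disjoint from $S_i$, with red boundary sitting in the thickened torus $\mathbb{T}$. Its homology class is determined by the red boundary cycle, which is unchanged from the construction of $\Sigma_j^{i,i+1}$ in Lemma \ref{lem:Sigma}, hence equals twice the collapsing class under the identification $H_1(\mathbb{T};\mathbb{Z})\cong H_1(T^2;\mathbb{Z})$, as required.
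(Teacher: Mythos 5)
You correctly identify the strategy the paper uses: the coamoeba dimer for $\mP'$ has the same combinatorial structure as for $\mP$ (two bi-gons, two tri-gons), the amoeba is deformed to bow around the projection of $S_i$, the pink curve is chosen as a non-increasing convex graph $p_2(p_1)$ crossing $p_1=0$ with negative $p_2$, and the pink vertices are pinned at the points $q_1,q_2$ determined by the $\theta_2$-circle of $S_i$ over the intersection of the blue amoeba with the projection of $S_i$. You also correctly single out the pink-vertex matching as the delicate ingredient for the final top-of-tri-gon disjointness check.

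However, your symplectic verification has a genuine gap: on the bi-gons you assert $dp_2^1 = d\theta_2^1 = 0$ and $\omega(\del\xi^1,\del\xi^2) = -d\theta_1^1\,dp_1^2 > 0$, which is the computation from Lemma~\ref{lem:Sigma}, \emph{but it cannot hold for $\mP'$.} If $dp_2^1 \equiv 0$ on the bi-gons, all $\xi^1$ level sets project to horizontal segments and the bi-gon amoeba is a horizontal strip, which does not bow around the $p_1=0$ segment where $S_i$ projects; in that case the pink curve has zero slope and avoidance of $S_i$ fails. The paper's proof addresses exactly this: it introduces a threshold height $b$ in the bi-gon coamoeba (and $c$ for the tri-gons) so that $\xi^1$ has constant $p_2$ only below $b$, and above $b$ the projection becomes a non-increasing graph limiting to (part of) the pink curve; it then normalizes $|dp_1^i| = 1$ and verifies $\omega(\del\xi^1,\del\xi^2) = d\theta_1^1 - dp_2^1\,d\theta_2^2 > 0$ by making $d\theta_2^2$ sufficiently small (red curve $C^0$-close in $\theta_2$ to blue) and bounding $|dp_2^1|$ by the maximal slope of the pink curve. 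Without this estimate your argument does not close; the symplectic condition in the region where the amoeba bends --- which is precisely the region you need in order to dodge $S_i$ --- is left unverified.
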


\begin{proof} As in the previous proof, we will draw in the $(p_1,p_2)$-coordinates the level sets of the $\theta_2$ and $\theta_1$ coordinates, naming
the former $\xi^1$ and the latter $\xi^2$. Let us start looking at the bi-gon, and
describe the projection of the $\xi^1$ curve in the amoeba. Each curve starting
close to the $\theta_2$ minimum, up to a certain height $b$ (to be specified),
will have a horizontal projection, with the maximum of the $p_1$ coordinate
corresponding to half of the $\theta_1$ coordinate of $\xi^1$. This last
property is preserved, even when we start at a height greater than $b$, but
then, the image becomes a graph of a non-increasing function $p_2(p_1)$,
eventually limiting to the part of the pink curve that stops at $x$, as
illustrated in the top-right picture of Figure \ref{fig:Def_Amoeba}. 
We assume that the derivative is smaller in norm than the derivative of the
graph giving the pink curve. For the top
part of the coamoeba, the $\xi^1$ curves have constant $p_2$ coordinates, with
the minimum of the $p_1$ coordinate attained at the middle of the $\theta_1$
coordinate. The amoeba projection of the $\xi^2$ curves are also indicated in the top-right picture of Figure \ref{fig:Def_Amoeba}. In particular, analyzing the
symplectic condition for the top of the coamoeba part, is essentially done as in the proof of Lemma \ref{lem:Sigma}, compare the top-middle and top-right
pictures of Figure \ref{fig:Def_Amoeba}.

To ensure the symplectic condition at
the bottom of the coamoeba, we need to carefully choose the point $b$, recalling
that we can choose the red curve so that the $\theta_2$ variation is small enough
compared with the $\theta_1$ variation. Away form the points in $\xi^1$ with
maximum $p_1$ coordinate, let us move along the $\xi^i$ curves with the normalised
condition $|dp_1^i| = 1$. The variation $d\theta_2^2$ will then be 
bounded by an extremely small constant  (w.r.t. the $\theta_1$ diameter of the coamoeba). We take a constant $b$, large enough to ensure that
$\omega(\del \xi^1, \del \xi^2) = d\theta_1^1 -  dp_2^1 d\theta_2^2 > 0$,
recalling that we forced $dp_2^1$ to be zero for points at $\theta_2$
heights smaller than $b$ and $|dp_2^1|$ is bounded by the maximum slope
of the pink curve. In the points on $\xi^1$ with
maximal $p_1$ coordinate, we simply have $dp_2^1 = 0$ and 
$\omega(\del \xi^1, \del \xi^2) = - d\theta_1^1 dp_1^2  > 0$.
We let the reader check the positivity for the limiting points at the pink 
curve.

Let us now move to the tri-gon part of the coamoeba. For the top part we choose a
height $c$ in the coamoeba, playing a similar role as $b$ in the above
paragraph. If the $\theta_2$ coordinate of $\xi^1$ is bigger than $c$, we take
the amoeba part to have constant $p_2$ coordinate. So if the $\theta_2$
coordinate is not smaller than $c$, we have $dp_2^1 = 0$ and $\omega(\del \xi^1,
\del \xi^2) = - d\theta_1^1 dp_1^2 > 0$. The analysis regarding the symplectic
condition is done as in the last paragraph for the part corresponding to the
$\theta_2$ coordinate smaller than $c$. For the bottom part, 
recall that the $p_2$ coordinate corresponding to the part $2$ of the 
coamoeba of the red curve is smaller than the one corresponding to the 
part $6$. So we can take $dp^1_2 > 0$. The analysis now is similar to the 
analogous part in the proof of Lemma \ref{lem:Sigma}. We have
$d\theta^1_1 \ge 0$ (being zero at the middle of the $\xi^1$ curves
whose $\theta_2$ coordinate is not bigger than the pink vertices), 
$dp_2^1 > 0$, $d\theta^1_2 = 0$, $dp^2_1 < 0$, $d\theta_1^2 = 0$,
$d\theta^2_2 > 0$, since we move on $\xi^2$ from the red curve
to the blue curve.
  
The fact that we chose the pink curve to cross $p_1 = 0$ with 
negative $p_2$ coordinate promptly ensures that the 
coamoeba region of $S_i$ corresponding to the segment given 
by the intersection of $p_1 = 0$ with the amoeba of the bi-gon,
does not intersect the bi-gon itself. For the tri-gon part, 
looking at the amoeba projection, we see that $S_i$ does
not intersect the region of the surface corresponding to the
bottom of the tri-gon. The fact that we chose the 
coamoeba of our red curve so that the pink vertices
coincide with the points $q_1$, $q_2$, ensures 
that the part of the surface whose coamoeba corresponds
to the top of the tri-gon does not intersect the Lagrangian 
$S_i$ as well, as required.

\end{proof}

\begin{rmk} \label{rmk:Sigma_isotopy}
  There is a smooth way to isotope the boundary of $\mP$ to the boundary
  of $\mP'$, and following that, a smooth way to isotope the amoebas
  of $\Sigma_j^{i,i+1}$ to the ones of $\left(\Sigma_j^{i,i+1}\right)'$, see again 
  Figure \ref{fig:Def_Amoeba}, as well as the small differences on the coamoebas. 
  Hence, we can isotope from $\Sigma_j^{i,i+1}$ to $\left(\Sigma_j^{i,i+1}\right)'$, with 
  their boundaries restricted to a thickened torus $\mathbb{T}=I\times T^2$.\hfill$\Box$
\end{rmk}

Now we see that the symplectic surface ${\Sigma'}_j^{i,i+1}$ intersects the Lagrangian 2-sphere $S_{i+1}$ once
in a point belonging to ${\sigma'}_j^{i+1}$. In an analogous fashion to Lemma \ref{lem:Sigma_prime}, we can
chop out of ${\Sigma'}_j^{i,i+1}$ the intersection point with $S_{i+1}$ and then glue its boundary, the
boundary of ${\sigma'}_j^{i+2}$ (where we remove the intersection of $\sigma_j^{i+2}$ with $S_{i+1}$), and the
boundary of a newly chosen ``red curve'' in a thickened torus that has three times the homology of the
collapsing cycle in this thickened torus, with a new pair of pants. Naming the former two curves blue, the
first step would be to construct a dimer model between the blue curves and the red curve, so that the
behaviour of this new surface obtained from the dimer model, on the region where it could
intersect $S_{i+1}$, is the same as the one analyzed in Lemma \ref{lem:Sigma_prime}. Denote this
surface by ${\Sigma'}_j^{i,i+2}$. We can iterate this process and consider symplectic surfaces
${\Sigma'}_j^{i,i+k}$ in the cut neighborhood $\mM$, that do not intersect $S_i, \dots, S_{i+k-1}$, and has
boundary on a thickened torus, whose homology class is $k+1$ times the collapsing cycle. This process leads to
the following:

\begin{lem} \label{lem:Sigma_1n}
  There exists a symplectic disk ${\Sigma'}_j^{1,n}$ inside $\mM$, not intersecting the Lagrangian set $\bigcup_{i=1}^{n-1} S_i$, and whose boundary lies on a thickened 
  torus $I \times T^2$, with boundary class being $n$ times the class of the collapsing cycle.
\end{lem}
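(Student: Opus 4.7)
The plan is to prove this by induction on $k$, constructing symplectic surfaces ${\Sigma'}_j^{1,1+k}$ for $k = 1, \dots, n-1$, and extracting ${\Sigma'}_j^{1,n}$ as the last one. The base case $k=1$ is precisely Lemma \ref{lem:Sigma_prime}: it provides ${\Sigma'}_j^{1,2}$, disjoint from $S_1$, with boundary on a thickened torus representing twice the collapsing cycle.

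For the inductive step, I assume ${\Sigma'}_j^{1,1+k}$ has been built, is disjoint from $S_1, \dots, S_k$, and has boundary on a thickened torus representing $(k+1)$ times the collapsing cycle. By construction ${\Sigma'}_j^{1,1+k}$ meets $S_{1+k}$ transversally in exactly one point, which lies on the sub-disk ${\sigma'}_j^{1+k}$ near the node $n_{1+k}$; similarly $\sigma_j^{2+k}$ meets $S_{1+k}$ in a single point by Remark \ref{rmk:disjoint}. I would carve out small neighborhoods of each of these intersection points, producing two blue boundary curves inside a thickened torus lying above the cut segment between $n_{1+k}$ and $n_{2+k}$, carrying $(k+1)$ and $1$ copies of the collapsing cycle respectively. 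The nested ordering $\Psi_n(s) > \Psi_{n-1}(s) > \cdots > \Psi_2(s)$ of bump functions from the paragraph preceding Proposition \ref{prp:Local_node} places these blue strands in disjoint layers of the thickened torus, so they can be connected by a single pair of pants.

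The heart of the step is the construction of this new pair of pants $\mP''$, whose third (red) boundary represents $(k+2)$ copies of the collapsing cycle. I would draw a dimer model directly generalizing Figure \ref{fig:Dimer_Sigma}, assembled from bi-gons and tri-gons; the amoeba and coamoeba analysis of Lemma \ref{lem:Sigma_prime} then transfers essentially verbatim, provided the pink curve is chosen to cross $p_1 = 0$ at negative $p_2$ and the pink vertices are arranged to coincide with the two points $q_1, q_2$ where the coamoeba of $S_{1+k}$ meets the blue curves. These two geometric choices are exactly what force $\mP''$ to be disjoint from $S_{1+k}$, and by the inductive hypothesis the glued surface is then disjoint from all of $S_1, \dots, S_{1+k}$. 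The symplectic condition $\omega(\partial \xi^1, \partial \xi^2) > 0$ on each bi-gon and tri-gon is checked as in Lemma \ref{lem:Sigma_prime}, by taking the $\theta_2$-variation of the red and blue curves sufficiently small relative to their $\theta_1$-variation and by choosing the reference heights $b, c$ in the coamoeba appropriately. Gluing $\mP''$ to the truncated ${\Sigma'}_j^{1,1+k}$ and the truncated $\sigma_j^{2+k}$ yields ${\Sigma'}_j^{1,2+k}$ with the required properties.

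The main obstacle I anticipate is not conceptual but quantitative book-keeping: as $k$ grows one must simultaneously increase the multiplicity of the red curve to $k+2$, keep the successive pairs of pants stacked inside progressively thinner nested thickened tori, and maintain alignment of the pink vertices with the coamoebas of all the spheres $S_i$ that have been avoided so far. All of these are compatible with the $\Psi_i$-nesting from Proposition \ref{prp:Local_node}, but the choice of the heights $b, c$ and the relative magnitudes of $d\theta_1, d\theta_2, dp_1, dp_2$ must be made uniformly over the induction to preserve the positivity of $\omega(\partial \xi^1, \partial \xi^2)$. After iterating $n-1$ times, the surface ${\Sigma'}_j^{1,n}$ is disjoint from the whole chain $\bigcup_{i=1}^{n-1} S_i$, its boundary lies on a thickened torus, and its boundary class is $n$ times the collapsing cycle, as desired.
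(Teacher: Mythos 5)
Your proposal follows the paper's proof in all essential respects: you proceed by the same induction, with base case Lemma~\ref{lem:Sigma_prime}, and the inductive step chops out the intersection of ${\Sigma'}_j^{1,1+k}$ and of $\sigma_j^{2+k}$ with $S_{1+k}$, then glues a new pair of pants built from a bi-gon/tri-gon dimer model, arranged so that the pink curve crosses $p_1=0$ at negative $p_2$ and the pink vertices hit the analogues of $q_1,q_2$ on the coamoeba of $S_{1+k}$. That is precisely the mechanism the paper invokes (pointing at Figure~\ref{fig:Dimer_Sigma}~(c) and Remark~\ref{rmk:Sigma_isotopy}), and your checks of the symplectic positivity condition on each dimer piece track the paper's as well.

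One local misattribution: the nesting $\Psi_k > \Psi_{k-1} > \cdots > \Psi_2$ from before Proposition~\ref{prp:Local_node} is used in the paper to keep the \emph{different} $j$-indexed disks $\sigma_j$ (and hence, later in the proof of Proposition~\ref{prp:Disjoint_S_i}, the different surfaces ${\Sigma'}_j^{1,n}$ for $j=1,\dots,k$) in mutually disjoint thickened tori; it is not what separates the two blue boundary curves you produce in the inductive step, which are for the \emph{same} $j$. Those two blue curves are disjoint for a more elementary reason: one is a notch on ${\sigma'}_j^{1+k}$ near the node $n_{1+k}$ and the other is a notch on $\sigma_j^{2+k}$ near $n_{2+k}$, so they sit near opposite ends of the segment carrying $S_{1+k}$ and can be separated by their $p_1$-projections. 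The pair of pants connecting them is then supplied by the dimer-model construction itself, not by the $\Psi$-nesting. This does not affect the validity of the induction, but the justification for that one sentence should be replaced.
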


\begin{proof} We build this surface iteratively as indicated above, starting with
${\Sigma'}_j^{1,2}$. The algorithm to build the red curve and the corresponding
dimer model is illustrated for going from ${\Sigma'}_j^{1,3} \cup \sigma_j^4$ to
${\Sigma'}_j^{1,4}$ in the third diagram of Figure \ref{fig:Dimer_Sigma}. We chose
the red curve to pass through consecutive chambers of the complement of the curves
given by the coamoeba projection of the blue curves, which we recall is the
boundary of the disconnected surface obtained by chopping off the intersections
of ${\Sigma'}_j^{1,i} \cup \sigma_j^{i+1}$ with $S_i$. We do it so that the top
part of the dimer, corresponding to one bi-gon and two tri-gons, has the same
configuration as in Lemma \ref{lem:Sigma_prime}, including the points analogous
to $q_1,q_2$. Recalling Remark \ref{rmk:Sigma_isotopy}, we can think that we
first glue a pair of pants $\mP$ as before to the boundaries of
${\Sigma'}_j^{1,i} \cup \sigma_j^{i+1}$ and then isotope to our desired surface
${\Sigma'}_j^{1,i +1}$, with the modifications happening in the same framework as
in the proof of Lemma \ref{lem:Sigma_prime}. 
\end{proof}

  \begin{figure}[h!]   
  \begin{center} 
 \centerline{\includegraphics[scale= 0.6]{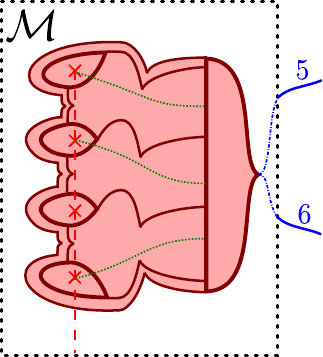}}

\caption{Final amoeba for the symplectic disk in the case of a symplectic-tropical curve
arriving at $n = 4$ nodes with multiplicity $ m = 5 + 6 = 11$, so $k=2$ and $d = 3$.
For visual purposes, we illustrate the amoeba of $k=2$ surfaces as if one 
envelopes the other, while in fact both have the same amoebas.}
\label{fig:Sigma_final_amoeba} 
\end{center} 
\end{figure}

Now we should inductively build the surfaces ${\Sigma'}_j^{1,n}$, for $j$ going
from $1$ to $k$, making sure these surfaces do not intersect. Recall that the boundaries 
of $\sigma_j$ and $\sigma_{j+1}$ are linked in the thickened 3-dimensional neighborhood. We can achieve non-intersection
by adjusting the crossings of blue and red curves between different amoebas, 
and the $p_2$ coordinate. Thus we can ensure the required non-intersection just by looking at 
the $(\theta_1, \theta_2, p_2)$ projection of the surfaces. The case $n =2$ is
illustrated in Figure \ref{fig:Dimer_Sigma} (d).  
We can then get the other $d$
disks, carrying the collapsing cycle from the respective nodes to the boundary of
the same thickened torus. They project to curves inside the amoeba of the
${\Sigma'}_j^{1,n}$ disks, see Figure \ref{fig:Sigma_final_amoeba}. After doing that, still within the cut neighborhood $\mM$, we can connect the
boundaries of the ${\Sigma'}_j^{1,n}$ surfaces, $j=1,\dots,k$ and of the $d$ disks
by $m = kn + d$ straight cycles and redistribute these cycles over curves
connecting to our symplectic-tropical curve $\SC(\Gamma)$, as we did in
the Section \ref{subsec:Conn_local_models}. Figure \ref{fig:Sigma_final_amoeba}
illustrates the amoeba image of this local model of a deformed 
symplectic-tropical curve, when $n=4$, $m=11$, so $k=2$ and $d=3$. 
This finishes the proof
of Proposition \ref{prp:Disjoint_S_i}.  \qed

We can glue all these local models now, as 
we did in Section \ref{subsec:Conn_local_models}, to get deformed symplectic-tropical curves, intersecting Lagrangian 
spheres only at prescribed nodes, with the number
determined by the total multiplicity and the number of nodes
at a given cut.

%%%%%%%%%%%%%%%%%%%%%%%%%%%%%%%%%%%%%%%%%%%%%%%%%%%%%%%%%%%%%%%%%%%%%%
%%%%%%%%%%%%%%%%%%%%%%%%%%%%%%%%%%%%%%%%%%%%%%%%%%%%%%%%%%%%%%%%%%%%%%

\subsection{Further Deformations of Symplectic Tropical Curves} \label{subsec:Chains_STC_Prelim}

In this subection we introduce a series of additional techniques regarding symplectic-tropical 
curves, that will allow us to visualize chains of them inside an ATF. 
When we say a chain of symplectic curves, we imply that the
total intersection between them is equal to the geometric intersection.
Thus, it is not enough to simply construct an STC for each curve in the chain,
as we did in the previous sections, as we want to {\it geometrically} realize the homological intersection. We start with a simple observation:

\begin{rmk} \label{rmk:Crossing_STC}

 First, for $i=1,2$, let $\SC_i$ denote two STCs as 
  in Definition \ref{dfn:Symp_comp_graph}. Let $C_i$ denote a STC
  in $X$ represented by $\SC_i$ as in Theorem \ref{prp:symp_trop}, and let $\gamma_i$ be an edge of $\SC_i$. If the homology classes in the Lagrangian torus fibers associated with the edges $\gamma_1$ and $\gamma_2$ are the same, then any intersection between
  $\gamma_1$ and $\gamma_2$ can be taken to be empty as an intersection of the symplectic surface $C_1$ and $C_2$, since we can just assume we carry disjoint cycles 
  in the same homology class.  \hfill$\Box$
  \end{rmk}

\begin{figure}[h!]   
  \begin{center} 
 \centerline{\includegraphics[scale=0.5]{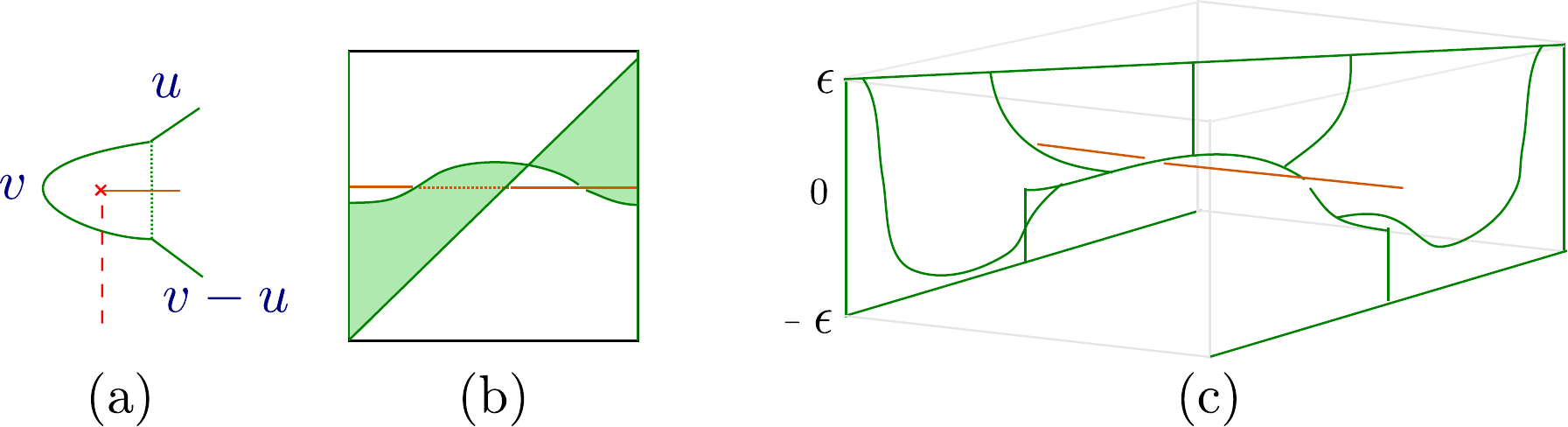}}
\caption{(a) The amoeba of a STC given by a disk $\sigma_j$ (from Proposition \ref{prp:Local_node}), glued along the boundary with 
two cycles intersecting once $\del \sigma_j$; (b) The coamoeba 
of the gluing surface; (c) A depiction of how the surface looks like inside a thickened torus.}
\label{fig:Esc_Cap} 
\end{center} 
\end{figure}

We will also need the following:
\begin{proposition} \label{prp:Esc_Cap}
  
 Consider a thickened torus $\bT = [-\epsilon , \epsilon ]\times T^2$, as the
 pre-image of a segment in the regular part of a base of an ATF. Let $\alpha$ be
 a straight cycle in $\{0\} \times T^2$ represented by $v \in H_1(\bT;\Z) \cong
 H_1(T^2;\Z) \cong \Z^2$, $\beta$ be a cycle in the class $v$ that wraps around $\alpha$ once, and $\gamma_\pm$ be a straight cycle in  $\{\pm \epsilon\} \times T^2$,
 represented by $u_\pm$, with $\det u_{\pm} \wedge v = \pm 1$, and $u_- = u_+ - v$.
 
 Then there exists a symplectic pair of pants in the complement in $\bT \setminus \alpha$, with boundary
 $\gamma_- \cup \beta \cup \gamma_+$.
  
\end{proposition}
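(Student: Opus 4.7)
My plan is to realize the pair of pants by combining the trivalent-vertex local model of Proposition \ref{prp:local interior} with the wrapping-disk construction of Proposition \ref{prp:Local_node}, gluing the pieces along straight cycles in intermediate torus fibres.

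First I would check the tropical balancing condition. Since $u_- = u_+ - v$, the three vectors $u_+$, $-u_-$, $-v$, each with multiplicity $1$, satisfy $u_+ - u_- - v = 0$, which is exactly the data of a trivalent vertex in the sense of Definition \ref{dfn:Symp_comp_graph}(ix). The determinant hypothesis ensures $|\det(u_\pm\wedge v)| = 1$, so in any torus fibre the straight cycles of classes $u_\pm$ meet a cycle of class $v$ transversely in exactly one point each, giving the coamoeba pattern underlying a Mikhalkin tripod.

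I would then construct the pair of pants in three patches, joined across intermediate annular regions of $\bT$. Two of the patches are symplectic cylinders obtained by transporting $\gamma_\pm$ from $\{\pm\epsilon\}\times T^2$ inwards along the $p_1$-direction, using Remark \ref{rmk:Symp_Curves}, and terminating at straight cycles of classes $u_\pm$ in a middle torus close to $\{0\}\times T^2$. The third patch is a wrapping disk with boundary $\beta$, obtained by invoking Proposition \ref{prp:Local_node} with $j\ge 2$ inside a thin thickened torus neighborhood of $\alpha$; this disk is disjoint from $\alpha$ by construction. The three patches are then merged by a trivalent transition surface, built out of the dimer-model and connecting-surface procedures from the proof of Proposition \ref{prp:local interior} and Section \ref{subsec:Conn_local_models}.

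The main obstacle is verifying that the transition surface can be made symplectic when one of its three incoming cycles is not straight but links $\alpha$ once. I would adapt the analysis of Lemma \ref{lem:Sigma_prime}, replacing the ``blue'' straight cycle there with the wrapping cycle $\del\sigma_j$; by shrinking the $\delta_j$ parameters in the construction of Proposition \ref{prp:Local_node}, the $d\theta$-variation of $\del\sigma_j$ can be made arbitrarily small, so the positivity inequality \eqref{eq:Symp_Amoeba} for the amoeba--coamoeba still holds across the transition region. Disjointness from $\alpha$ then follows immediately: the wrapping disk avoids $\alpha$ by construction, and the cylinders together with the transition patch project away from the central torus $\{0\}\times T^2$ where $\alpha$ sits.
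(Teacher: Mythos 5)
The core idea of using the trivalent/dimer machinery from Proposition~\ref{prp:local interior} and Section~\ref{subsec:Conn_local_models} is the right one, and your balancing check $u_+ - u_- - v = 0$ correctly identifies the tripod data. However, the third ``patch'' in your decomposition is not valid, and the resulting surface would not be a pair of pants with the prescribed boundary.

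Concretely: you take the pair of pants to be (cylinder from $\gamma_+$) $\cup$ (cylinder from $\gamma_-$) $\cup$ (``wrapping disk'' bounding $\beta$) $\cup$ (trivalent transition surface). Since the transition surface itself has three boundary circles, gluing it to the two cylinders and to a disk caps off $\beta$ entirely: the union has Euler characteristic $0+0+1+(-1) = 0$ and boundary $\gamma_-\cup\gamma_+$, i.e.\ it is a cylinder, not a pair of pants with $\gamma_-\cup\beta\cup\gamma_+$ on its boundary. What you are actually building is the composite $\Sigma = (\text{pair of pants})\cup_\beta\sigma_j$, which is the object used later in Remark~\ref{rmk:Esc_Cap_STC}, but not the pair of pants that Proposition~\ref{prp:Esc_Cap} asserts.

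Moreover, the ``wrapping disk with boundary $\beta$'' cannot be produced inside $\bT$ at all. By hypothesis $\bT=[-\epsilon,\epsilon]\times T^2$ sits over the \emph{regular} part of the base, so $\bT$ deformation retracts onto $T^2$ and $[\beta]=v\ne 0$ in $H_1(\bT;\Z)\cong H_1(T^2;\Z)$; hence $\beta$ bounds no disk inside $\bT$. Proposition~\ref{prp:Local_node}, which you invoke, explicitly requires a node for its $\sigma_j$ disks --- the node is exactly what allows the cycle to collapse. Your disjointness argument has the same flaw: you say the cylinders and transition ``project away from the central torus $\{0\}\times T^2$,'' but $\beta$ is a boundary of the pair of pants and lives in a thin neighborhood of $\{0\}\times T^2$, so the surface must enter the $p_1\approx 0$ region. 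The paper's argument is finer: it constructs a single dimer model (as in Figure~\ref{fig:Esc_Cap}(b)) with boundary cycles representing $u_+$, $u_-$ and the \emph{non-straight} cycle $\beta$, builds the surface in $I\times T^2$ as in Section~\ref{subsec:Local_Int}, and then checks that the \emph{zero level set of the height function} $\rho_2$ misses the point set of $\alpha$, which is a weaker and achievable condition. Replacing your disk patch by $\beta$ itself as a free boundary and redoing the disjointness via the $\rho_2$-level-set argument would bring your proof in line with the paper's.
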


\begin{proof}
  Use the dimer model represented in Figure \ref{fig:Esc_Cap} (b) to build a 
  symplectic surface as in Section \ref{subsec:Local_Int}, making sure that the
  $0$ level set of the height function ($\rho_2$ in Proposition \ref{prp:local interior})
  is disjoint from the straight cycle $\alpha$. The end result is depicted in 
  Figure \ref{fig:Esc_Cap} (c). 
\end{proof}

\begin{rmk} \label{rmk:Esc_Cap_STC}
  Applying this result for $\del \sigma_j$, the boundary of a
disk $\sigma_j$ as in Proposition \ref{prp:Local_node}, we see that we can pass
with all the $\sigma_l$, with $l < j$ 
(the ones with boundary closer to $\alpha = \del \sigma_1$) through the middle
of the surface constructed in Proposition \ref{prp:Esc_Cap}. The amoeba of this process is depicted
in Figure \ref{fig:Esc_Cap}.(a). \hfill$\Box$
\end{rmk}

\begin{figure}[h!]   
  \begin{center} 

       \centerline{\includegraphics[scale=0.6]{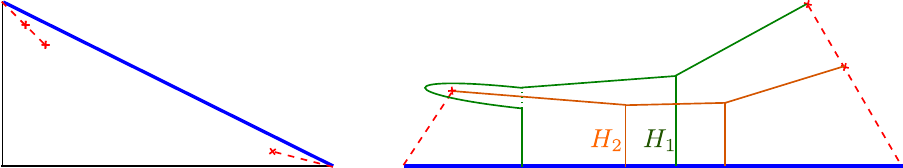}}

\vspace{0.5cm}

\centerline{\hspace{0.8cm} \includegraphics[scale=0.6]{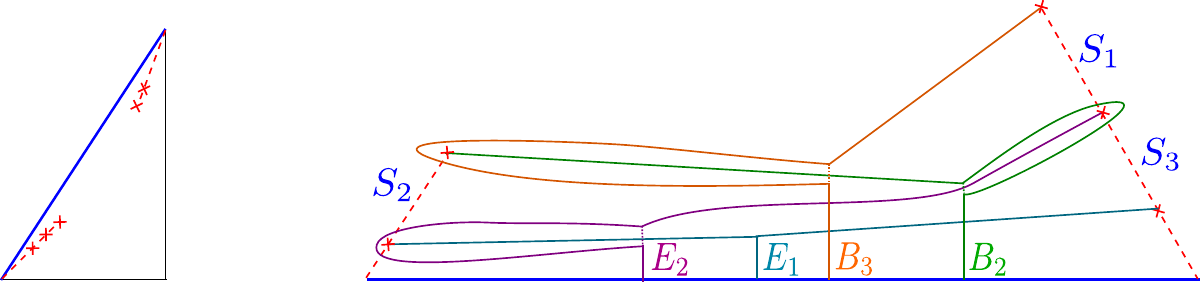}}

\vspace{0.5cm}

\centerline{\hspace{0.8cm} \includegraphics[scale=0.6]{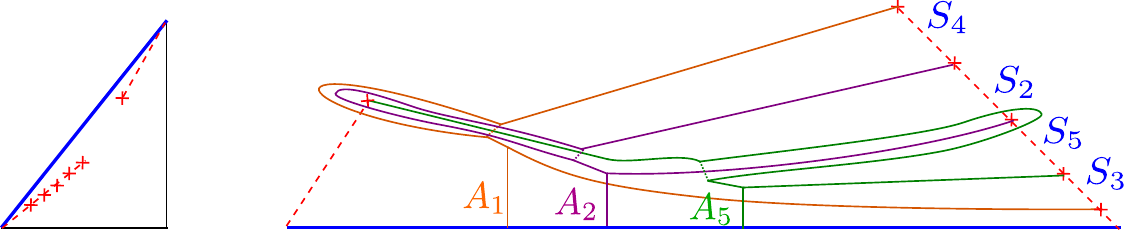}}

\caption{STCs in classes $H_1$, $H_2$ in a specific ATBD of $\PxP$ (in the first row), in the exceptional classes 
$E_1$, $E_2$, $B_2 = H - E_1 - E_3$ and $B_3 = H - E_1 - E_2$ in an ATBD of $\BlIII$ (in the second row), and in
the exceptional classes $A_1 = H - E_1 - E_4$, $A_2 = E_4$, and  
$A_5 = E_1$ in an ATBD of $\BlIV$ (in the third row). In this figure, the ATBDs are depicted on the left of each column on their own, and (a piece of) the ATBDs with the corresponding STCs are depicted on the right.}
\label{fig:PxP_3Blup} 
\end{center} 
\end{figure}

Now, the top left diagram in Figure \ref{fig:PxP_3Blup}, on the left of the first row, is an ATBD diagram for $\PxP$. It can be obtained from the top right diagram of Figure
\ref{fig:IntroTable}, after we apply nodal trades. For this diagram in Figure \ref{fig:PxP_3Blup}, we apply the above Remark
\ref{rmk:Esc_Cap_STC} to visualise STCs in the neighborhood $\fN$ of the highlighted edge of the ATF of
$\PxP$. They are representatives of the classes $H_1 = [\C\P^1 \times \{\pt \}]$ and $H_2= [\{\pt\} \times
\C\P^1 ]$. Remarks \ref{rmk:Esc_Cap_STC}, \ref{rmk:Crossing_STC}, are used to visualize a 4-chain of STCs in
the neighborhood $\fN$ of the highlighted edge of the ATF of $\BlIII$ of triangular shape depicted in Figure
\ref{fig:IntroTable}. The homology classes for the spheres in this 4-chain are the ones corresponding to the
highlighted edges of the toric diagram for $\BlIII$ in Figure \ref{fig:IntroTable}. Figure \ref{fig:PxP_3Blup}
also shows a 3-chain of symplectic spheres in the ATBD of $\BlIV$ of \cite[Diagram~$(A_4)$]{Vianna3}. Their
classes corresponds to the highlighted 3-chain in the first diagram of $\BlIV$ in Figure \ref{fig:IntroTable}.

Now, we can iteratively apply Remark \ref{rmk:Esc_Cap_STC} in the
neighborhood of one or more nodes. We are going to use simplified pictures, for
visual purposes. For instance, Figure \ref{fig:Simp_Pictures} (a) shows a
simplified depiction of two nonintersecting STCs near two nodes of a cut in an
ATF, with associated vector $v$. Surrounding each node, we see $a$ $\sigma_j$
type curve, where we applied Remark \ref{rmk:Esc_Cap_STC} $a$ times around each
node and then unite their amoebas using Remark \ref{rmk:Crossing_STC}. Figure
\ref{fig:Simp_Pictures} (b) shows an alternative version, where we took $2b$
curves around a unique node associated with the vector $w$, and applied
Remarks \ref{rmk:Esc_Cap_STC}, \ref{rmk:Crossing_STC} to
get two disjoint symplectic curves.\\

\begin{figure}[h!]   
  \begin{center} 

 \centerline{\includegraphics[scale=0.6]{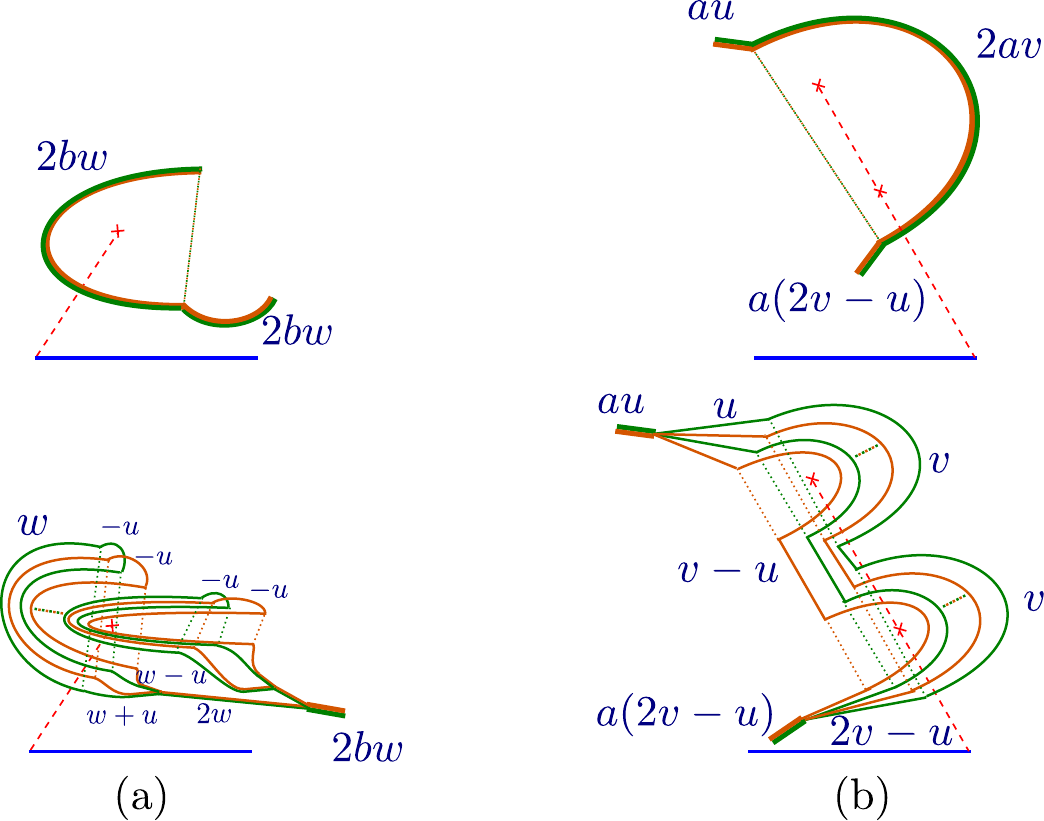}}

\caption{Simplified pictures for the STC discussed above.}
\label{fig:Simp_Pictures} 
\end{center} 
\end{figure}

Let us now shift our focus to what happens as we approach the local model nearby a trivalent 
vertex of an STC, with two (or more) sets of curves satisfying the balancing 
condition \eqref{eq:BalCond}. The first observation is that if you arrive 
with two sets of cycles with total homology represented by $\bbv \in \R^2$ and want to glue them
to sets of cycles with homoloy $\bbw$ and $-\bbw - \bbv$, using two non-intersecting
surfaces, the price you pay is that the cycles corresponding to $\bbw$ and $-\bbw - \bbv$
in the boundary of the second surface, must link the corresponding boundaries on the 
first surface a total amount\footnote{For us $\bbw \wedge \bbv$
	denotes the determinant of the two vectors in $\R^2$.} of $\bbw \wedge \bbv$. We define precisely what 
we mean by cycles linking within the regular part of a Lagrangian torus 
fibration:

\begin{definition} \label{dfn:Linking} Let $\DD$ be a 2-disk, fix a point $o \in \DD$ and take two disjoint 1-cycles $\alpha$, $\sigma$ in $\DD
\times T^2$ away from $\{o\}\times T^2$. We view $\sigma$ as a cycle in $H_1(\DD
\times T^2 \setminus \alpha; \Z) \cong H_1(T^2; \Z) \oplus \Z$, where the 
first summand corresponds to $\{o\}\times T^2 \hookrightarrow \DD
\times T^2 \setminus \alpha$.

By definition, the \emph{linking} between $\sigma$ and $\alpha$
is the projection of the class $[\sigma] \in H_1(\DD
\times T^2 \setminus \alpha; \Z)$ onto the (rightmost) $\Z$-factor. 
The sign involves a choice of generator for the $\Z$ cycle, 
that we assume the same, when dealing with more than one 1-cycle relative
to $\alpha$.\hfill$\Box$
\end{definition}

Let us summarize the above discussion into the following statement:

\begin{proposition} \label{prp:Link_dimer} Consider the local model for a
symplectic surface near the interior vertex constructed in Proposition \ref{prp:local
interior}, associated to the balancing condition $m_1\bbw_1 + m_2\bbw_2
+m_3\bbw_3 = 0$. Consider the number $d = m_1m_2 |\bbw_1 \wedge \bbw_2|$ and $d= \delta_1 + 
\delta_2$, a two partition $\delta_1, \delta_2 \in \Z_{\ge 0}$.

Then there exists another
disjoint symplectic surface in the same local neighborhood such that the
boundaries satisfies the following two conditions:

\begin{itemize}
	\item[-] The $m_3$ boundaries associated to $\bbw_3$ are parallel 
	copies of the corresponding boundaries of the original curve inside the same torus 
	fibre;
	\item[-] The boundaries associated to $\bbw_1$ and $\bbw_2$ link the corresponding 
	boundaries of the original curve, in the sense of Definition \ref{dfn:Linking},
	$\delta_1$ and $\delta_2$ times, respectively. 
\end{itemize} 
\end{proposition}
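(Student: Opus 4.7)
The plan is to construct the second symplectic surface as a suitably deformed parallel copy of the original local model from Proposition \ref{prp:local interior}, where the deformation is controlled by the intersection-theoretic quantity $d = m_1 m_2 |\bbw_1 \wedge \bbw_2|$. The first step is to start with a translated copy: shift the coamoeba of the original dimer model (constructed via Proposition \ref{prp:Dimer3}) by a small generic vector $t \in T^2$. This yields a disjoint symplectic surface whose boundaries, at each of the three exits, are parallel copies of the corresponding boundary cycles of the original surface, but displaced by $t$ inside each torus fibre. At this stage both surfaces have disjoint boundary cycles, so all three linking numbers vanish.

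The second step is to modify the translated copy in a neighbourhood of the $\bbw_3$ leaf so that its $m_3$ boundary cycles coincide with parallel copies of the original $\bbw_3$-cycles in the \emph{same} torus fibre. This is precisely the kind of surgery that Proposition \ref{prp:Esc_Cap} and Remark \ref{rmk:Esc_Cap_STC} are designed for: cycles of class $\bbw_3$ on the second surface can be passed around the cycles of class $\bbw_3$ on the first surface by inserting symplectic pairs of pants inside a thickened torus above a segment on the $\bbw_3$ leaf. Each such elementary passage cancels a unit of displacement in the $T^2$ direction orthogonal to $\bbw_3$, at the cost of introducing a single unit of linking between the resulting boundary cycles and the original ones, further upstream in the interior vertex local model.

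The third step is to track where this linking budget must be deposited. Since $\bbw_1,\bbw_2,\bbw_3$ satisfy the balancing condition $m_1\bbw_1 + m_2\bbw_2 + m_3\bbw_3 = 0$, any displacement of the boundary $\bbw_3$-cycles towards the original ones forces the cycles at the other two exits to differ from a naive parallel translate. Homologically, inside the local model near the interior vertex, the $m_1$ strands of class $\bbw_1$ meet the $m_2$ strands of class $\bbw_2$ with algebraic intersection $m_1 m_2 |\bbw_1 \wedge \bbw_2| = d$, which is precisely the total linking that must be absorbed between the two surfaces at the $\bbw_1$ and $\bbw_2$ exits. For any prescribed partition $d = \delta_1 + \delta_2$, one routes each of the $d$ elementary passages of the $\bbw_3$-cycles so that it travels through the interior vertex and exits either along the $\bbw_1$ leg (contributing to $\delta_1$) or along the $\bbw_2$ leg (contributing to $\delta_2$); combinatorially this amounts to choosing, for each crossing of the dimer cycles near the interior vertex, on which side of the trivalent vertex the strand leaves, and by Proposition \ref{prp:Dimer3} every such combinatorial choice is realised by a genuine dimer configuration.

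The main obstacle will be verifying that the modifications remain \emph{symplectic}, not merely smooth. The dimer-based analysis of amoebas and coamoebas used in the proofs of Lemmas \ref{lem:Sigma} and \ref{lem:Sigma_prime}, together with the positivity test \eqref{eq:Symp_Amoeba}, must be carried out for the perturbed surface: one has to choose the heights $\rho_i$ on each polygon of the modified dimer so that the corresponding $\xi^1,\xi^2$-probes satisfy $\omega(\partial\xi^1,\partial\xi^2)>0$ throughout, including at the pair-of-pants insertions from Proposition \ref{prp:Esc_Cap}. Granting this, disjointness from the original surface follows as in Section \ref{subsec:Conn_local_models} by choosing the successive perturbation parameters $\Psi_j$ strictly monotone, and Definition \ref{dfn:Linking} then gives exactly the required linking numbers $\delta_1$ and $\delta_2$.
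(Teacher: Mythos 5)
There is a genuine gap at the very first step, and the rest of the argument does not close it. You begin by translating the coamoeba of the original local model by a small generic $t\in T^2$ and asserting that this ``yields a disjoint symplectic surface.'' But the coamoeba of the surface produced in Proposition~\ref{prp:local interior} is a dimer subdivision of $T^2$, i.e.\ its projection to the torus factor fills (essentially all of) $T^2$, and the amoeba (projection to the $(\rho_1,\rho_2)$-factor) lies in a fixed bounded interval. A $T^2$-translate therefore has overlapping coamoeba and overlapping amoeba, and the two surfaces intersect; the number of intersection points, counted with sign, is precisely the tropical multiplicity $d=m_1m_2|\bbw_1\wedge\bbw_2|$. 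Far from being a free starting point, these $d$ intersections are the obstruction that the proposition is designed to overcome. Asserting disjointness here skips the entire problem, and your later appeal to ``choosing the successive perturbation parameters $\Psi_j$'' does not help, since the $\Psi_j$ are the bump functions from the node-model construction (Proposition~\ref{prp:Local_node}) and play no role in the interior-vertex model.

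The subsequent steps also do not supply what is missing. Your use of Proposition~\ref{prp:Esc_Cap} and Remark~\ref{rmk:Esc_Cap_STC} is out of context: those statements create pairs of pants around a single straight cycle in a thickened torus above a segment and are tailored to cycles near a node, not to reconfiguring the three sets of boundary strands of the trivalent local model. The crucial claim in your third step --- that each of the $d$ ``elementary passages'' can be routed independently through the trivalent vertex so as to deposit $\delta_1$ units of linking on the $\bbw_1$ exit and $\delta_2$ on the $\bbw_2$ exit, and that ``Proposition~\ref{prp:Dimer3} [says] every such combinatorial choice is realised by a genuine dimer configuration'' --- is not supported: Proposition~\ref{prp:Dimer3} constructs a single dimer model with prescribed straight zigzag classes; it says nothing about a second dimer interleaved with the first, nor about the relative linking of the two. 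What the paper actually does is bypass the translate-and-repair scheme entirely: it directly builds a second dimer model on the same $T^2$, with the new green cycle parallel to the old one, and with new red/blue strands inserted at each of the $d$ crossings of the original red and blue cycles so that $\delta_1$ of them are red and $\delta_2$ are blue; the disjointness and symplecticity are then enforced by an explicit assignment of $\rho_2$-heights to the overlapping dimer faces (the ``dotted segment'' bookkeeping and Steps~1--6 of the algorithm). Your proposal never produces this second dimer nor any substitute for the $\rho_2$-height assignment, so the symplecticity check that you correctly flag as the main obstacle is not just unfinished --- the object to be checked has not been constructed.
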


\begin{proof}
  
  We start with the dimer model for the original surface constructed in
  Proposition \ref{prp:local interior}. Recall that we used $(\rho_1,\rho_2)$
  coordinates for the amoeba description, with $\rho_2 \in [-\epsilon,
  \epsilon]$. We will construct another dimer model for our second surface, and
  build the amoeba as in Sections \ref{subsec:Local_Int},
  \ref{subsec:Conn_local_models}, \ref{subsec:def_symptrop_curves}.
  
  	\begin{figure}[h!]   
  	\begin{center} 
  		
  		\centerline{\includegraphics[scale=0.4]{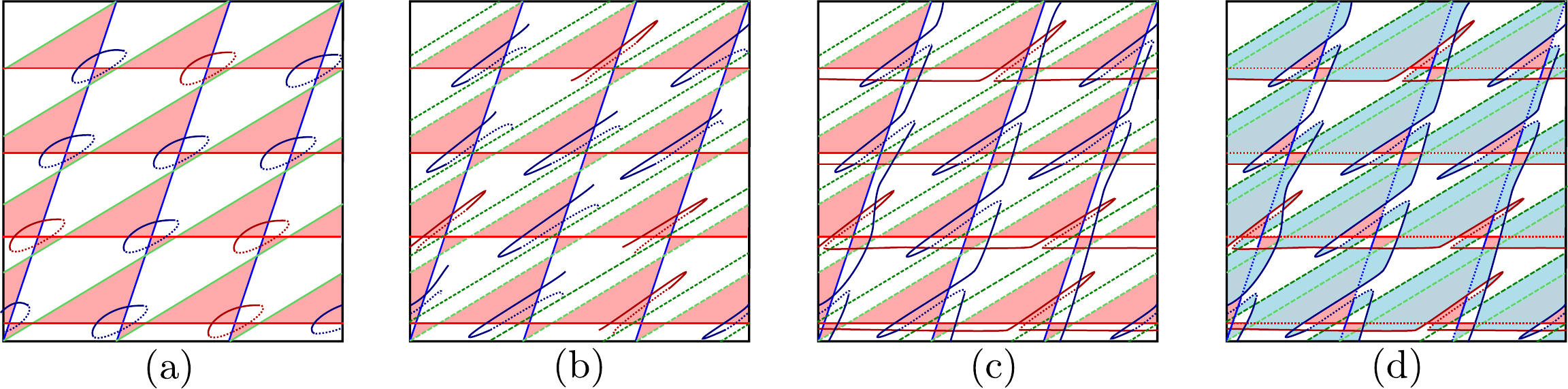}}
  		
  		\caption{Near an interior vertice of determinant $d$, 
  			there are disjoint symplectic curves with the total linking of their boundaries equal 
  			to $d$. This series of diagrams illustrate the algorithm to get the disjoint surfaces.}
  		\label{fig:DimerLink} 
  	\end{center} 
  \end{figure}
  
  In the
  intersection of these two dimer models we will record the $\rho_2$-coordinate
  of the new curve in the dimer model, to be less than $-\epsilon$ or greater than $\epsilon$. This will be indicated in the same diagram, as follows. If
  coamoeba regions of the two surfaces intersect, the boundary of the lower
  region will be denoted by a dotted segment in the intersection. Figure
  \ref{fig:DimerLink} (d) illustrates two non-intersecting surfaces via their
  coamoeba projection. By the work developed in Sections \ref{subsec:Local_Int},
  \ref{subsec:Conn_local_models}, \ref{subsec:def_symptrop_curves}, 
  the construction of such dimer model, with the additional $\rho_2$ 
  information, will be enough to ensure that we obtain two disjoint symplectic surfaces. 
  
  As before we take $\bbw_1 = (1,0)$, and we use the following algorithm to 
  construct the dimer model, which concludes Proposition \ref{prp:Link_dimer}.

  {\it Algorithm for the Dimer Model:} 
  
  \begin{enumerate}[label= Step \arabic*]
    
    \item Color the $\bbw_1$ cycles red, $\bbw_2$ cycles blue, $\bbw_3$ cycles green.
    
    \item \label{item:Links} For each of the $|d| = m_1m_2 |\bbw_1 \wedge \bbw_2|$ intersections between 
  the red and blue cycles of the original curve, draw cycles linking both
  blue and red cycles, as illustrated in Figure \ref{fig:DimerLink} (a), 
  in the same pattern. Color $\delta_1$ of them red and the other $\delta_2$ of them
  blue.   

    \item \label{item:Parallel} Consider a new green cycle, parallel to the original green, constructed as a positive shift in, say, the $\phi_2$ coordinate of the amoeba (the $\phi_i$-coordinates being
    the coamoeba coordinates as in the notation of Proposition \ref{prp:local interior}). 
    
  \item Replace the red/blue links of \ref{item:Links}, by chains of the
             same color, ``linking'' the corresponding original curve, with one
             end on the new green cycle and crossing the original blue and red
             cycle twice, to the left of the new green cycle. The first crossing
             is above and the second below the original dimer, with respect to
             the $\rho_2$ coordinate, as illustrated in Figure
             \ref{fig:DimerLink} (b).
    
    \item \label{item:connect} For each red/blue linking chain, we connect its ``tail'' with the ``head'' 
    of the adjacent chain of the same color, using a red/blue 1-chain 
    parallel to the original chain of the same color, forming
    the new red/blue cycles as illustrated in Figure \ref{fig:DimerLink} (c).     
      
    \item \label{item:shade} Paint the regions that were created by the new green, red and blue cycles,
    passing below or above the original dimer accordingly, as illustrated in 
    Figure \ref{fig:DimerLink} (d).
	\end{enumerate}
\end{proof}

\begin{remark} \label{rmk:Linking}
	We could allow linkings for the $\bbw_3$ cycles, 
	provided the total linking is still $d$. This is not needed 
	for our purposes, and it would make the construction algorithm more intricate.\hfill$\Box$
\end{remark}

\begin{remark} \label{rmk:n_dimerLink}
  After getting the two surfaces of Proposition \ref{prp:Link_dimer},
  one can actually run an analogous algorithm to get yet a third, fourth and $n$-th surfaces, 
  disjoint from the previous ones, with boundary so that the green cycle 
  is parallel to the previous green cycles, and the red/blue cycles links
  each red/blue cycles of the previous surfaces $\delta_1$/$\delta_2$ times.
  For that, one just needs to replicate the intersection pattern of the $n$-th red/blue curve, 
  with the previous $(n-1)$ ones.\hfill$\Box$
\end{remark}

Now assume that we arrive at the surface near the interior node, constructed in
Proposition \ref{prp:local interior}, with $m_3$ straight (green) $\bbw_3$
cycles, parallel to the original one, and $m_1$ (red) $\bbw_1$ cycle, linking the original curve   
$c$ times. We can adjust the new red curves, so that the curves arrive linking 
the dimer model over one red/blue vertex as in Figure \ref{fig:Local_linking} 
(a).

\begin{proposition} \label{prp:link+d}
  Consider the above setup and $d\in\N$ as in Proposition \ref{prp:Link_dimer}. Then there exists a symplectic surface connecting the $m_3$ $\bbw_3$-cycles (green) and $m_1$ $\bbw_1$-cycles (red), 
  with $m_2$ $\bbw_2$-cycles (blue), linking the original $\bbw_2$-cycles $(c + d)$ times. 
\end{proposition}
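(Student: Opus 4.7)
The plan is to extend the dimer-model algorithm from Proposition \ref{prp:Link_dimer} by allowing additional linking chains beyond the $d$ natural ones supported at the red/blue intersection sites of the original dimer. The key observation is that the linking chains built there contribute to both the red and the blue linking counts (one unit to each), so by inserting $c$ such chains on top of a base construction, one can realize the asymmetric target pair $(c,c+d)$ rather than only pairs summing to $d$.

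First, I will invoke Proposition \ref{prp:Link_dimer} with partition $\delta_1 = 0$, $\delta_2 = d$ to produce a disjoint symplectic surface whose boundary consists of $m_3$ parallel green cycles, $m_1$ parallel red cycles (linking the original red cycles zero times), and $m_2$ blue cycles linking the original blue cycles $d$ times. This is the baseline surface. Next, for each of the $c$ desired extra linkings, I will insert a new linking chain modeled on the one depicted in Figure \ref{fig:DimerLink}(a): a topological cylinder that links both the original red and the original blue cycles once, placed in a thin annular neighborhood of an existing red/blue intersection site (several chains may be attached near the same site provided they are separated by their $\rho_2$-level). Coloring each inserted chain so that one arc joins the red boundary and the other joins the blue boundary contributes $+1$ to both the red and the blue linking per chain, producing the totals $(c,c+d)$ as required.

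To ensure the construction is symplectic and that the inserted chains are mutually disjoint, and disjoint from the baseline surface, I will assign each newly inserted chain its own $\rho_2$-layer, placed strictly above or below the baseline dimer, in the spirit of the shading step of the algorithm in Proposition \ref{prp:Link_dimer} together with Remark \ref{rmk:n_dimerLink}. Disjointness is then visible from the projection to the $(\phi_1,\phi_2)$-coamoeba together with the $\rho_2$-ordering, and the symplectic positivity $\omega(\partial\xi^1,\partial\xi^2) > 0$ along the probe curves is verified exactly as in Proposition \ref{prp:local interior} and Lemma \ref{lem:Sigma_prime}, via the $(\rho_i,\phi_i)$-analysis that underlies the computation in \eqref{eq:Symp_Amoeba}.

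The hard part will be the combinatorial bookkeeping: keeping the $c$ additional chains pairwise disjoint, disjoint from the baseline, and each one genuinely linking both original cycles once in the sense of Definition \ref{dfn:Linking}, while preserving symplecticity across the gluing regions. My resolution, following Remark \ref{rmk:n_dimerLink}, is an iterative $\rho_2$-stratification: the $n$-th inserted chain is placed at a $\rho_2$-height beyond all previous ones, and its coamoeba picture is drawn by replicating the intersection pattern of the preceding chain with a small transverse shift in the $T^2$-direction normal to its linking. Once this stratification is fixed, the symplectic and disjointness verifications reduce to the local analyses already developed in Subsections \ref{subsec:Local_Int} and \ref{subsec:def_symptrop_curves}, and I expect the argument to close with no further surprises.
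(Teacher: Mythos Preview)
Your proposal has a genuine gap in its central mechanism. You claim that each linking chain from Figure~\ref{fig:DimerLink}(a) ``contributes to both the red and the blue linking counts (one unit to each)'', and then build your argument around inserting $c$ such double-contributing chains on top of a $\delta_1=0$, $\delta_2=d$ baseline. But this is not what those chains do: in the algorithm of Proposition~\ref{prp:Link_dimer} each of the $d$ chains is colored \emph{either} red \emph{or} blue and then, in Step~4, is replaced by a chain linking the original cycle \emph{of the same color}. That is exactly why $\delta_1+\delta_2=d$ is a partition. Your ``one arc joins the red boundary and the other joins the blue boundary'' would connect the new red and new blue cycles to each other, which changes the topology rather than the individual linking numbers. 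So the mechanism by which you pass from total linking $d$ to the asymmetric pair $(c,c+d)$ is not there.

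There is also a mismatch with the setup. The red cycles arriving with linking $c$ are \emph{given} input data coming from elsewhere in the STC; the proposition asks you to build the interior-vertex surface compatible with that boundary. Starting from Proposition~\ref{prp:Link_dimer} with $\delta_1=0$ gives you a surface whose red boundary has linking $0$, not $c$, and you never explain how to match it to the incoming cycles. The paper proceeds differently: it first concentrates all $c$ incoming red links over a single red/blue vertex (this is the adjustment mentioned just before the proposition), and then constructs at that vertex a new local dimer model, Figure~\ref{fig:Local_linking}(b), in which the outgoing blue chain is forced to link the original blue exactly $c+1$ times. At the remaining $d-1$ red/blue vertices one places ordinary blue links as in Step~2 of the algorithm, and then runs Steps~5--6. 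The total blue linking is $(c+1)+(d-1)=c+d$. The point you are missing is precisely this new local model at the concentrated vertex: it is not a rerun of Proposition~\ref{prp:Link_dimer}, but a variant designed to absorb the incoming red links and convert each into an extra blue link.
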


\begin{figure}[h!]   
  \begin{center} 

 \centerline{\includegraphics[scale=0.6]{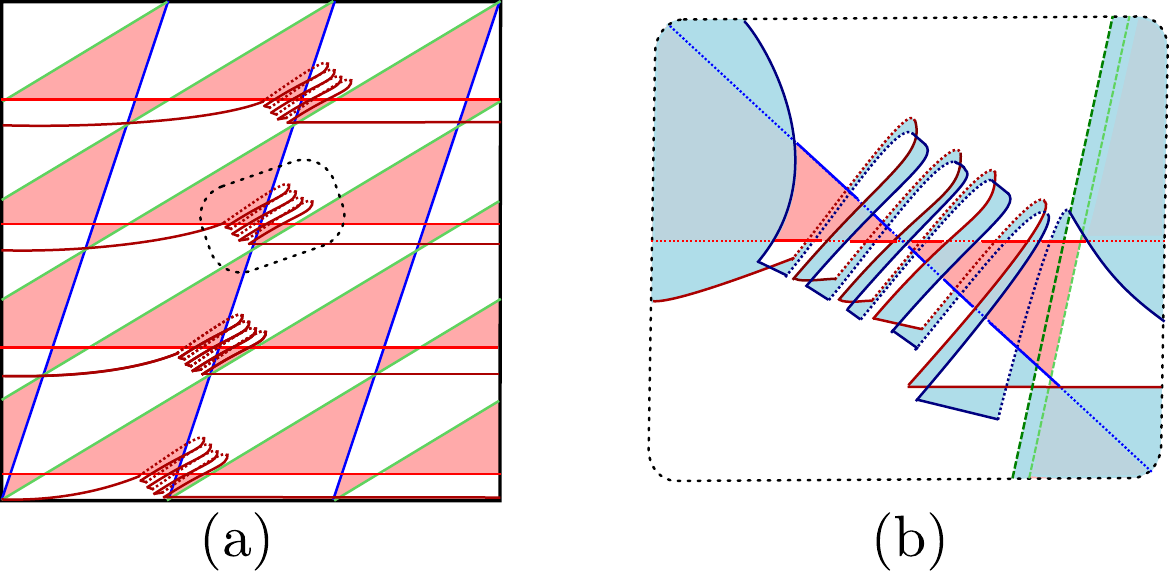}}

\caption{How linking can change for non-intersecting symplectic surfaces near the interior vertex of a STC.
Picture (b) is a zoomed version around the node with red links. The blue curve is a bit rotated for visual purposes.}
\label{fig:Local_linking} 
\end{center} 
\end{figure}

\begin{proof}
  At each vertex having the linking of the red curves, we construct a local model as illustrated by Figure 
  \ref{fig:Local_linking} (a) and (b). We note that the new blue chain links the 
  original blue at the red linking number plus 1. In Figure \ref{fig:Local_linking}, 
  the red cycles are linking 4 times and the new blue cycle links 5 times the 
  original blue cycle. (A generalization of the picture is clear.) Now we add 
  blue links to the remaining intersections of the originals blue and red curves,
  as in \ref{item:Links} of the algorithm of Proposition \ref{prp:Link_dimer}. 
  Then we can run \ref{item:connect}, \ref{item:shade} of the above algorithm in 
  an analogous fashion, noting that the local model of Figure \ref{fig:Local_linking} 
  (b) is well adapted for that.
\end{proof}

\begin{rmk} \label{rmk:n_link+d} As in Remark \ref{rmk:n_dimerLink}, let us assume that we have arrived to a 3rd set of $m_1$ red cycles (correspondingly 4th,\ldots,$n$th), each linking $c'$ times one red cycle for each of the previous
surfaces, and a 3rd set of $m_3$ green cycles (correspondingly 4th,\ldots,$n$th), parallel to the green cycles of the
previous surfaces. By inspecting Figure \ref{fig:Local_linking} (a) and (b), now imagining that the red
surface represents the two previously constructed surfaces (correspondingly three,\ldots, and $n-1$ surfaces), close to each other, and the third surface is represented by the blue surface in Figure \ref{fig:Local_linking}.(b) (correspondingly by the 4th,\ldots, and $n$th surface). As in
the proof of Proposition \ref{prp:link+d}, after the initial adjustment of concentrating the several red links in
one node as before, we can locally construct the third blue link (correspondingly 4th,\ldots,$n$th) in a manner that locally links all other
previous blue cycles with linking number being the local red linking number plus one, i.e. with linking number $c' + 1$. Close to the boundary of the local
region, the third local surface will lay above the previous two surfaces (similarly above the previous three,\ldots, and $n-1$ surfaces). 
Hence we can globally construct the third surface as in the proof of Proposition \ref{prp:link+d} (and similarly with the 4th,\ldots, and $n$th surfaces). \hfill$\Box$
  
%  This can also be iterated 
%  $n$ times, so that the local behaviour glues well with the $n$-th iteration of the algorithm, described in Remark \ref{rmk:n_dimerLink}.

\end{rmk}

%%%%%%%%%%%%%%%%%%%%%%%%%%%%%%%%%%%%%%%%%%%%%%%%%%%%%%%%%%%%%%%%%%%%%%
%%%%%%%%%%%%%%%%%%%%%%%%%%%%%%%%%%%%%%%%%%%%%%%%%%%%%%%%%%%%%%%%%%%%%%

\subsection{Getting chains of symplectic-tropical curves} \label{subsec:Chains_STC}

Let us apply the results of Section \ref{subsec:Chains_STC_Prelim} to construct the required chains of symplectic-tropical curves used in Theorem \ref{sec:proofmain}. There are three cases, corresponding to $\CP^1\times\CP^1$, $\BlIII$ and $\BlIV$, which we now analyze.

\subsubsection{The case of $\CP^1\times\CP^1$.} \label{subsubsec:Chains_STC_PxP}
Consider the triangular-shaped ATF of the symplectic monotone $\PxP$, with a smooth corner, 
associated to a solution of the Diophantine equation
$$1 + q^2 + 2r^2 = 4qr.$$
Let $\fN$ be a neighborhood of the edge opposite the smooth corner -- where the frozen vertex is located -- and consider its associated cuts, as in Section \ref{subsubsec:Neigh_Edge}.

\begin{proposition} \label{prp:PxP_H1H2} There is a 2-chain of symplectic-tropical
curves inside the edge neighborhood $\fN$, such that the associated symplectic curves belong to the classes $H_1$ and $H_2$, and have total
intersection number one, i.e., equal to their topological intersection. \end{proposition}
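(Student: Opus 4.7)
The plan is to construct two symplectic-tropical curves $\SC_1,\SC_2\subset\fN$ whose associated symplectic surfaces $C_1,C_2$ lie in the classes $H_1,H_2\in H_2(\PxP;\Z)$ respectively and meet transversally in a single point. For each $\SC_i$, I apply the recipe of Section \ref{subsubsec:Neigh_Edge}: a single interior trivalent vertex with one leaf of multiplicity $r$ heading into a node of the $n_2q^2$-cut along the primitive vector $(q,-l)$, one leaf of multiplicity $q$ heading into a node of the $n_3r^2$-cut along $(-r,m)$, and one leaf along the bottom edge of multiplicity $Gp/(n_2n_3)=2$. The balancing condition \eqref{eq:BalCond} is exactly \eqref{eq:Trop_line}, which is equivalent to $1+q^2+2r^2=4qr$ via \eqref{eq:lm}. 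The bottom-edge multiplicity $2$ matches the anti-canonical intersection number $H_i\cdot 2(H_1+H_2)=2$, and Proposition \ref{prp:Disjoint_S_i} is then used to choose which nodes inside each cut the surface crosses, fixing the homology class as $H_1$ (resp.\ $H_2$) rather than differing by classes of the Lagrangian spheres living in the cuts.

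To realise both STCs \emph{simultaneously} in $\fN$ and with $C_1\cdot C_2=1$, I would lay their skeletons on top of each other in the pattern of Figure \ref{fig:PxP_3Blup} (top). Along the bottom edge and along each leaf to the nodes, both STCs carry cycles in the same homology class of the ambient torus fibre, so Remark \ref{rmk:Crossing_STC}, combined with the escape-cap construction of Proposition \ref{prp:Esc_Cap} and Remark \ref{rmk:Esc_Cap_STC} near each node, allows these portions of the two surfaces to be displaced to be mutually disjoint. The only region in which an interaction is forced is at the interior trivalent vertex, where Proposition \ref{prp:Link_dimer} together with Remark \ref{rmk:n_dimerLink} supplies a second local patch disjoint from the first whose boundary cycles link those of the first patch a prescribed total of $m_1m_2|\bw_1\wedge\bw_2|$ times, distributed by a partition $(\delta_1,\delta_2)$. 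Choosing this partition so that precisely one linking survives produces a single transverse geometric crossing, which is the geometric avatar of $H_1\cdot H_2=1$.

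The main obstacle is the homological bookkeeping: verifying that exactly one forced transverse crossing at the interior vertex implements $H_1\cdot H_2=1$, and that no extra intersection is hidden elsewhere. This requires (i) transporting carried torus cycles through each bending vertex via the monodromy matrices prescribed by condition (viii) of Definition \ref{dfn:Symp_comp_graph}, (ii) specialising the determinant $m_1m_2|\bw_1\wedge\bw_2|$ in Proposition \ref{prp:Link_dimer} to the multiplicities $(q,r,2)$ and the vectors of \eqref{eq:det} to confirm it reduces to linking number $1$, and (iii) reconfirming via Proposition \ref{prp:Disjoint_S_i} that the interactions with the Lagrangian spheres inside the cuts do not introduce additional homological terms. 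Once these points are settled, Theorem \ref{prp:symp_trop} promotes the combined local models into the desired 2-chain of embedded symplectic spheres in $\fN$.
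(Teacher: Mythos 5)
Your high-level strategy — two STCs in $\fN$, with intersections controlled by the linking machinery of Section \ref{subsec:Chains_STC_Prelim} and the Lagrangian-sphere bookkeeping of Proposition \ref{prp:Disjoint_S_i} — is the right starting point, but the actual proof hinges on an algebraic rewriting of the balancing condition that you have not used, and without it your argument does not close.

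The tripod from Section \ref{subsubsec:Neigh_Edge} has legs $qv$, $rw$ and $2(0,1)$, so the determinant at the interior vertex in Proposition \ref{prp:Link_dimer} is $m_1 m_2 |\bw_1 \wedge \bw_2| = qr \cdot |v \wedge w| = 2qr$. This does \emph{not} reduce to $1$; it grows without bound along the Symington sequence, so the assertion that "precisely one linking survives" at that vertex is unfounded. What the paper actually does is exploit the parity forced by Vieta jumping — $q = 2a+1$, $r = 2b+1$ — to rewrite
$$qv + rw + 2(0,1) = v + w + 2u = 0, \qquad u := av + bw + (0,1),$$
and then compute $w \wedge u = u \wedge v = 1$. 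Those determinants are exactly the $\pm 1$ condition that Proposition \ref{prp:Esc_Cap} requires, so the escape-cap construction can be run with $\pm u$ against $v$ and against $w$; without $u$, you cannot invoke Proposition \ref{prp:Esc_Cap} at all, because $v \wedge w = 2$. The paper also needs a sign analysis ($u \wedge (0,1) = (r-q)/2$, $(q-5r)/2 < 0$ from $q < 4r$, and the mirror case $r > q$ via $-u$) to decide the direction of $u$ and ensure the resulting diagram actually sits inside $\fN$; this is absent from your proposal. Finally, the correct homological bookkeeping is the compatibility identity
$$2qr = r^2 + a + 1 + aq,$$
verified using the Markov equation, in which the summand $1$ is the single transverse crossing realizing $H_1 \cdot H_2 = 1$ and the other summands are linking contributions absorbed elsewhere along the diagram — this is the computation you would need to carry out, rather than an appeal to a partition $(\delta_1,\delta_2)$ of a determinant. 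There is also a subtler point you elide: Proposition \ref{prp:Link_dimer} produces a surface \emph{disjoint} from the first near the interior vertex, so the forced intersection is not "at the interior vertex" but arises globally from the linking of the escape-cap cycles; identifying where that one crossing lives is precisely what Figure \ref{fig:PxP_H1H2} and the compatibility identity make precise.
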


\begin{proof} We need to revisit the specific combinatorics in this situation. The associated Markov type equation of interest is $1 + q^2 + 2r^2 = 4qr.$ 
From Equation \eqref{eq:lm}, we see that the determinant between the associated vectors 
$v = (-r,m)$ and $w = (q, -l)$ is $v \wedge w = 2$. It follows from the corresponding Vieta jumping (Proposition \ref{prop:arithmetic}) that $q = 2a + 1$ and $r = 2b +1$ are odd. Hence, we can rewrite the balancing condition as: 
\begin{equation} \label{eq:vwu}
  qv + rw + 2(0,1) = v + w + 2u = 0
\end{equation}  
where $u = av + bw + (0,1)$. We readily see that $w \wedge u =  u \wedge v 
= 1$. In consequence, we are allowed to use the vectors $\pm u$, with each of the vectors $v$ and $w$ 
as in Proposition \ref{prp:Esc_Cap}. Finally, we look at 
\[ u \wedge (0,1) =  -\frac{q-1}{2}r + \frac{r-1}{2}q = \frac{r - q}{2}. \]

Assume that $q > r$, so that $u$ points to the left. Then we deduce that 
$$ 2v - u \wedge (0,1) = \frac{q -5r}{2} < 0,$$
because $1 + 2r^2 = q(4r - q)$, so $q < 4r$.

\begin{figure}[h!]   
  \begin{center} 

 \centerline{\includegraphics[scale=0.5]{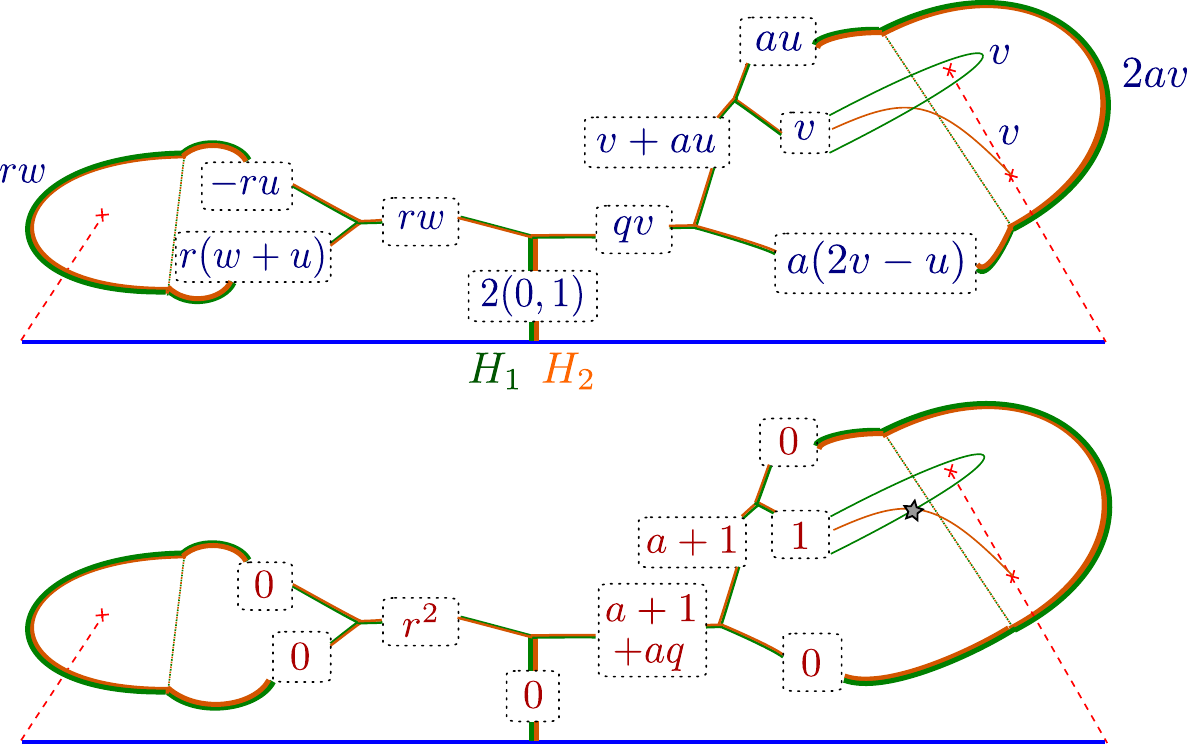}}

\caption{STCs in classes $H_1$, $H_2$ in a neighborhood $\fN$ of the edge 
opposite to the smooth corner in an arbitrary ATF of $\PxP$ associated with a 
solution of $1 + q^2 + 2r^2 = 4qr$.}
\label{fig:PxP_H1H2} 
\end{center} 
\end{figure}
 
Now, Figure \ref{fig:PxP_H1H2} is a depiction of the chain of symplectic spheres in $\fN$
in classes $H_1$ and $H_2$. The intersection is depicted by a star. The top picture records the homology classes of the 
cycles corresponding to each edge. The bottom picture records the linking 
between the two cycles obtained by applying Propositions \ref{prp:Link_dimer} and \ref{prp:link+d}. 
We note that we get the Markov equation as a compatibility equation for 
the interior vertice associated with the $2(0,1)$ cycles. Indeed, 
since $qv\wedge rw = 2qr$, we must have
\[ 2qr = r^2 + a + 1 + aq = \frac{2r^2 + q - 1 + 2 + (q-1)q}{2} =  \frac{2r^2 + q^2 + 1}{2}. 
\]

In the case $r > q$, we replace the vector $u$ by $-u$ in Figure
\ref{fig:PxP_H1H2}, and note that $u \wedge (0,1) > 0$ and
$$ 2v + u \wedge (0,1)
= \frac{-q -3r}{2} < 0.$$
The case $r = q$ is depicted in Figure \ref{fig:PxP_3Blup}.
\end{proof}

\begin{remark}
The construction obtained by the above picture in Figure \ref{fig:PxP_H1H2} is equivalent to the construction of two (geometrically) disjoint copies of the $H_1$ 
	class, to which we can apply a Dehn twist with respect to the visible Lagrangian 2-sphere.\hfill$\Box$
\end{remark}

%Chains of symplectic tropical curves in $Bl_3(\C\P^2)$ and $Bl_4(\C\P^2)$ should
%be constructed with the same techniques. Since the combinatorics of each of the
%Diophantine equations in Subsection \ref{ssec:ArithmeticSymington} lead to
%prescribed intersection, one must adapt the numerics accordingly.

\subsubsection{The case of $\BlIII$.} \label{subsubsec:Chains_STC_BlIII}

Let us now consider a triangular-shaped ATF for the symplectic 4-manifold $\BlIII$, with a smooth corner, 
associated to a solution of the Diophantine equation
$$1 + 2q^2 + 3r^2 = 6qr,$$
$\fN$ a neighborhood of the edge opposite the smooth corner, and its associated cuts, as in Section \ref{subsubsec:Neigh_Edge}. In this case, the required chain of symplectic surfaces reads:

\begin{proposition} \label{prp:3Blup_A1A2A5} There exists a 4-chain of symplectic-tropical
curves inside the edge neighborhood $\fN$ whose associated symplectic curves lie in the exceptional classes $E_1$, $E_2$, $B_2 = H - E_1 - E_3$ and $B_3 = H - E_1 - E_2$, and the intersection between two of them equals their geometric intersection. \end{proposition}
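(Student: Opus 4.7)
The plan is to follow the template of Proposition \ref{prp:PxP_H1H2}, adapted to the finer combinatorics of $\BlIII$. From the Markov-type equation $1 + 2q^2 + 3r^2 = 6qr$ together with Equation \eqref{eq:lm}, the cut vectors $v = (-r,m)$ and $w = (q,-l)$ now satisfy $v \wedge w = Gp/(n_2 n_3) = 1$. Consequently, for any interior vertex whose vertical leaf points toward the edge opposite the smooth corner, the balancing condition reads
\[
qv + rw + (0,1) = 0 .
\]
Each of the four classes $E_1$, $E_2$, $H-E_1-E_2$, $H-E_1-E_3$ has intersection one with the anti-canonical divisor, so each is to be realized by a single ``base'' STC of this form, with one leaf of multiplicity $r$ going into the $v$-cut, one of multiplicity $q$ going into the $w$-cut, and one leaf of multiplicity one going out through the opposite edge.

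First, I would apply Theorem \ref{wSTCs} to realize each of the four classes as an STC in $\fN$. Since the four classes differ pairwise only by combinations of Lagrangian sphere classes supported inside the cuts, I would apply Proposition \ref{prp:Disjoint_S_i} independently to each of the four STCs to prescribe which of the nodes inside each cut receive the leaves; in this way the intersection numbers with the visible Lagrangian spheres can be tuned until the homology class of each STC is the prescribed one.

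Second, I would arrange the four STCs so that the pairwise intersection pattern matches the chain
\[
E_2 \cdot (H-E_1-E_2) \;=\; (H-E_1-E_2) \cdot E_1 \;=\; E_1 \cdot (H-E_1-E_3) \;=\; 1,
\]
with the remaining three pairwise intersections vanishing. For the non-adjacent pairs I would invoke Remark \ref{rmk:Crossing_STC}, routing the shared portions of the diagrams by parallel, non-intersecting cycles in the same torus fibre. For each of the three required transverse intersections I would introduce an auxiliary trivalent vertex shared by the two relevant STCs, with arriving vectors of determinant one, exactly in the spirit of the auxiliary vector $u = av + bw + (0,1)$ used in the proof of Proposition \ref{prp:PxP_H1H2}; each determinant-one meeting then produces a single transverse intersection point between the two associated symplectic spheres, by Propositions \ref{prp:Link_dimer} and \ref{prp:link+d}.

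The main obstacle will be realizing these constructions simultaneously: three intersections and three disjointness conditions among four interlocked tropical graphs, together with the node choices of Proposition \ref{prp:Disjoint_S_i}, must all be mutually consistent. The compatibility should hinge on the same Vieta-jumping identity underlying $1 + 2q^2 + 3r^2 = 6qr$, reappearing as a balancing check at each auxiliary interior vertex, just as $1 + q^2 + 2r^2 = 4qr$ did at the end of Proposition \ref{prp:PxP_H1H2}. As a practical matter I would first verify by direct inspection the symmetric base case $q = r = 1$ (depicted in Figure \ref{fig:PxP_3Blup}), and then transport the configuration along the Vieta-jumping mutations using Remarks \ref{rmk:Esc_Cap_STC} and \ref{rmk:Crossing_STC}, checking at each mutation step that the auxiliary determinant-one vertices and node assignments remain consistent with the desired chain.
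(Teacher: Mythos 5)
Your toolkit matches the paper's: the balancing condition with $v \wedge w = 1$ coming from Equation~\eqref{eq:lm}, Theorem~\ref{wSTCs} and Proposition~\ref{prp:Disjoint_S_i} to place individual STCs in the correct homology classes, Propositions~\ref{prp:Link_dimer} and~\ref{prp:link+d} together with Remarks~\ref{rmk:Crossing_STC} and~\ref{rmk:Esc_Cap_STC} to realize the prescribed intersections and disjointnesses, and the observation that the Markov-type relation $1+2q^2+3r^2 = 6qr$ must reappear as a compatibility check at the interior vertex. You also write down the correct linear chain $E_2,\ B_3,\ E_1,\ B_2$ and the correct intersection pattern.

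However, you stop precisely at the step that constitutes the content of the proof, and the tactic you propose to fill it is not the one the paper uses and is not justified. The paper does \emph{not} verify a base case and transport the configuration along Vieta jumping. Instead, it observes that the Diophantine equation forces $r$ odd and $q \equiv 1$ or $2 \pmod{3}$, and it gives a direct, parametrized construction for general $(q,r)$ split into these two congruence cases (Figures~\ref{fig:3Blup_E1E2B2B3_q1} and~\ref{fig:3Blup_E1E2B2B3_q2}). The two cases produce genuinely different tropical diagrams and different linking-parameter regimes: $\delta,\varepsilon \in \{0,1\}$ with $\delta+\varepsilon=1$ when $q = 3a+1$, but $\varepsilon\in\{0,1\}$, $\delta\in\{1,2\}$ with $\delta+\varepsilon=2$ when $q = 3a+2$. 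In each case the compatibility at the interior vertex reduces, by direct computation, to $qr = \frac{3r^2+2q^2+1}{6}$. Since the combinatorial shape of the diagram changes across a Vieta jump (it toggles between the two congruence classes of $q$ mod $3$), ``transporting the configuration'' is not a well-defined replacement for the explicit construction; you would still need to produce the two families of diagrams and check the linking arithmetic, which is exactly where your proposal ends. The residual concern you flag — that the three intersections, three disjointnesses, and node assignments must be simultaneously consistent — is answered in the paper not by a separate consistency lemma but by the fact that the construction is drawn explicitly and its sole nontrivial compatibility is the Markov identity.
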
 

\begin{proof}
  
From Equation \eqref{eq:lm}, we deduce that the determinant between associated vectors 
$v = (-r,m)$ and $w = (q, -l)$ is $v \wedge w = 1$. From the corresponding Vieta jumping (Proposition \ref{prop:arithmetic}), we obtain that $q^2 \equiv -1 \mod 3$, 
and thus $q \equiv 1$ or $2 \mod 3$, and $r \equiv 1 \mod 2$.

\begin{figure}[h!]   
  \begin{center} 

 \centerline{\includegraphics[scale=0.5]{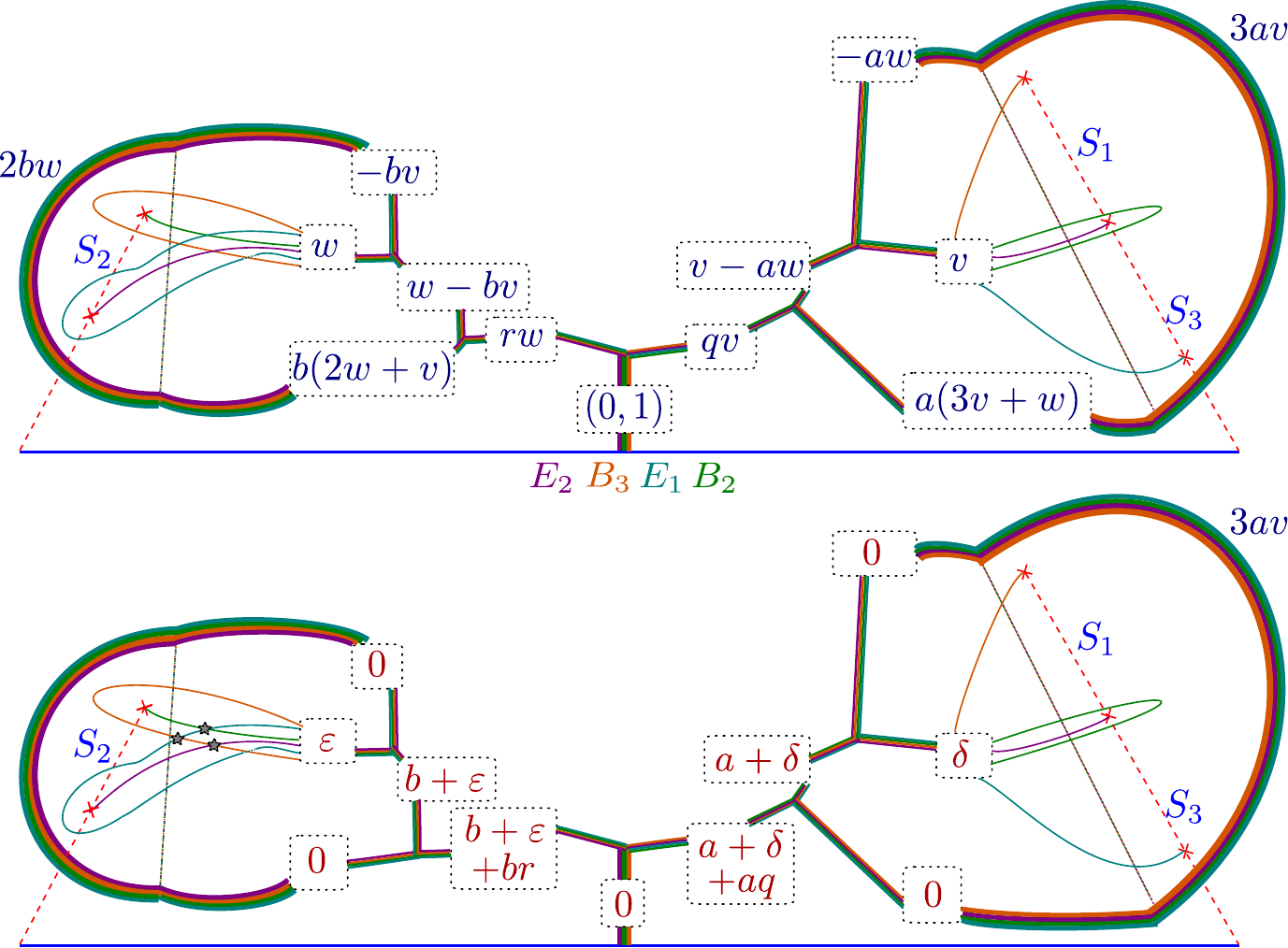}}

\caption{STCs in classes $E_1$, $E_2$, $B_2$ and $B_3$ in a neighborhood $\fN$ of the edge 
opposite to the smooth corner in a arbitrary ATF of $\BlIII$ associated with a 
solution of $1 + 2q^2 + 3r^2 = 6qr$.}
\label{fig:3Blup_E1E2B2B3_q1} 
\end{center} 
\end{figure}
 
 Figure \ref{fig:3Blup_E1E2B2B3_q1} illustrates the case $q = 3a + 1$, $r=2b +1$.  As before, 
the intersection is depicted by a star in the bottom picture, and occurs exactly as we change the linking number between cycles. 
The top picture records the homology classes of the 
cycles corresponding to each edge. The bottom picture records the linking 
between the two cycles by applying Propositions \ref{prp:Link_dimer} and \ref{prp:link+d}.
In this case, the linking numbers $\delta$ and $\varepsilon$, will depend on the curves
we are taking into account. We have that $\varepsilon,\delta \in \{0,1\}$ and $\delta 
+ \varepsilon = 1$. The compatibility condition becomes:
\[ qr = b + \varepsilon +br + a + \delta +aq = \frac{ 3(r-1)(r+1) + 6 + 2(q - 1)(q+1)}{6} =  \frac{3r^2 + 2q^2 + 1}{6} 
\]

Figure \ref{fig:3Blup_E1E2B2B3_q2} illustrates the second case $q = 3a + 2$, $r=2b +1$. 
In this case, we have $\varepsilon \in \{0,1\}$, $\delta \in \{1,2\}$ and $\delta 
+ \varepsilon = 2$, where the compatibility becomes:

\[ qr = b + \varepsilon +br + 2a + \delta +aq = \frac{ 3(r-1)(r+1) + 12 + 2(q-2)(q+2)}{6} =  \frac{3r^2 + 2q^2 + 1}{6},
\]
as required. This concludes the construction for the case of $\BlIII$.
\begin{figure}[h!]   
  \begin{center} 

 \centerline{\includegraphics[scale=0.5]{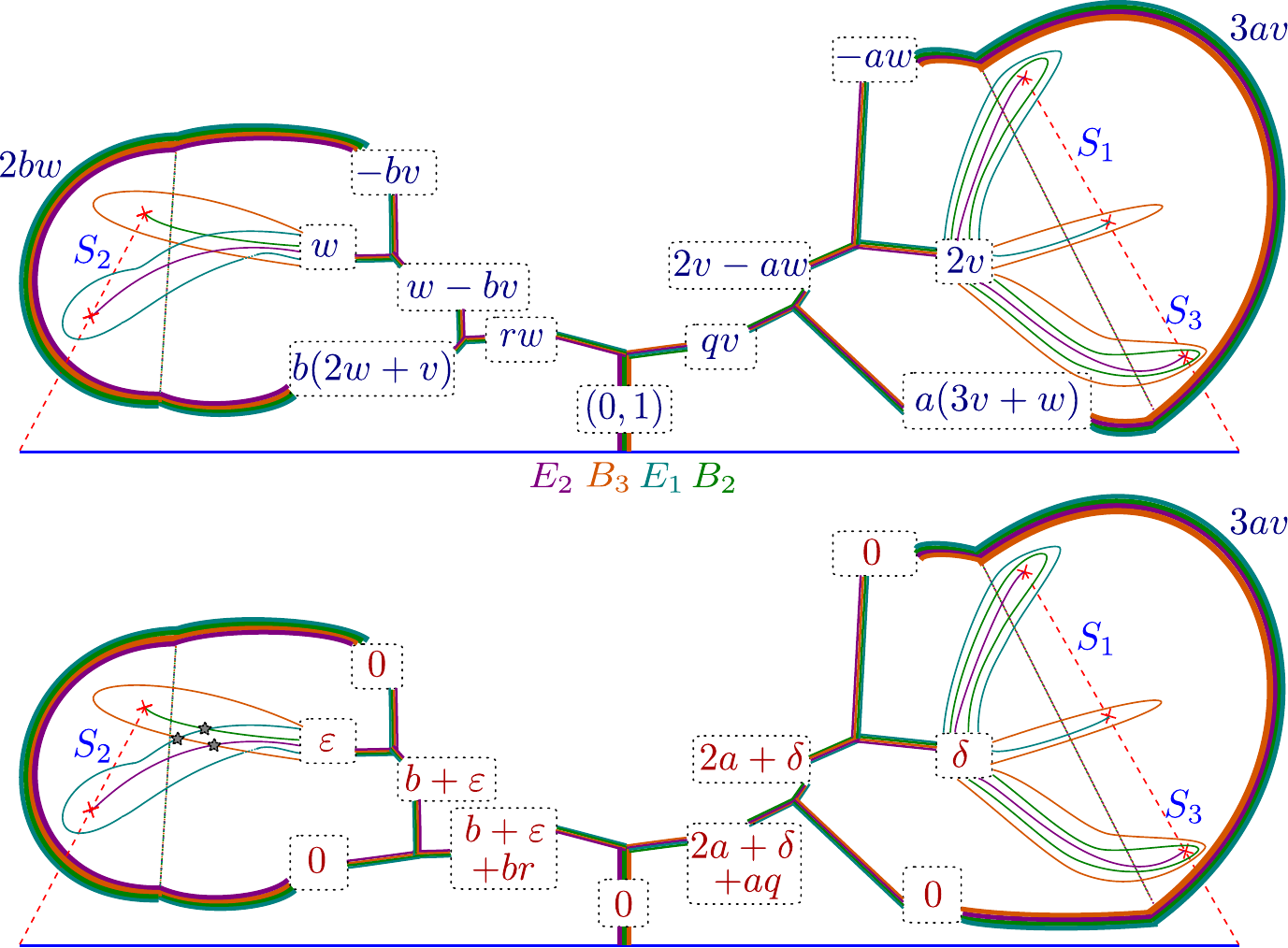}}

\caption{STCs in classes $E_1$, $E_2$, $B_2$ and $B_3$ in a neighborhood $\fN$ of the edge 
opposite to the smooth corner in a arbitrary ATF of $\BlIII$ associated with a 
solution of $1 + 2q^2 + 3r^2 = 6qr$.}
\label{fig:3Blup_E1E2B2B3_q2} 
\end{center} 
\end{figure}

\end{proof}

\subsubsection{The case of $\BlIV$.} \label{subsubsec:Chains_STC_BlIV}

Finally, we consider a triangular-shaped ATF for the symplectic surface $\BlIV$, with a smooth corner, 
associated to a solution of the Diophantine equation
$$1 + q^2 + 5r^2 = 5qr,$$
$\fN$ a neighborhood of the edge opposite the smooth corner, together with the associated cuts, as in Section \ref{subsubsec:Neigh_Edge}. The required chain of symplectic curves is obtained in the following

\begin{figure}[h!]   
  \begin{center} 

 \centerline{\includegraphics[scale=0.5]{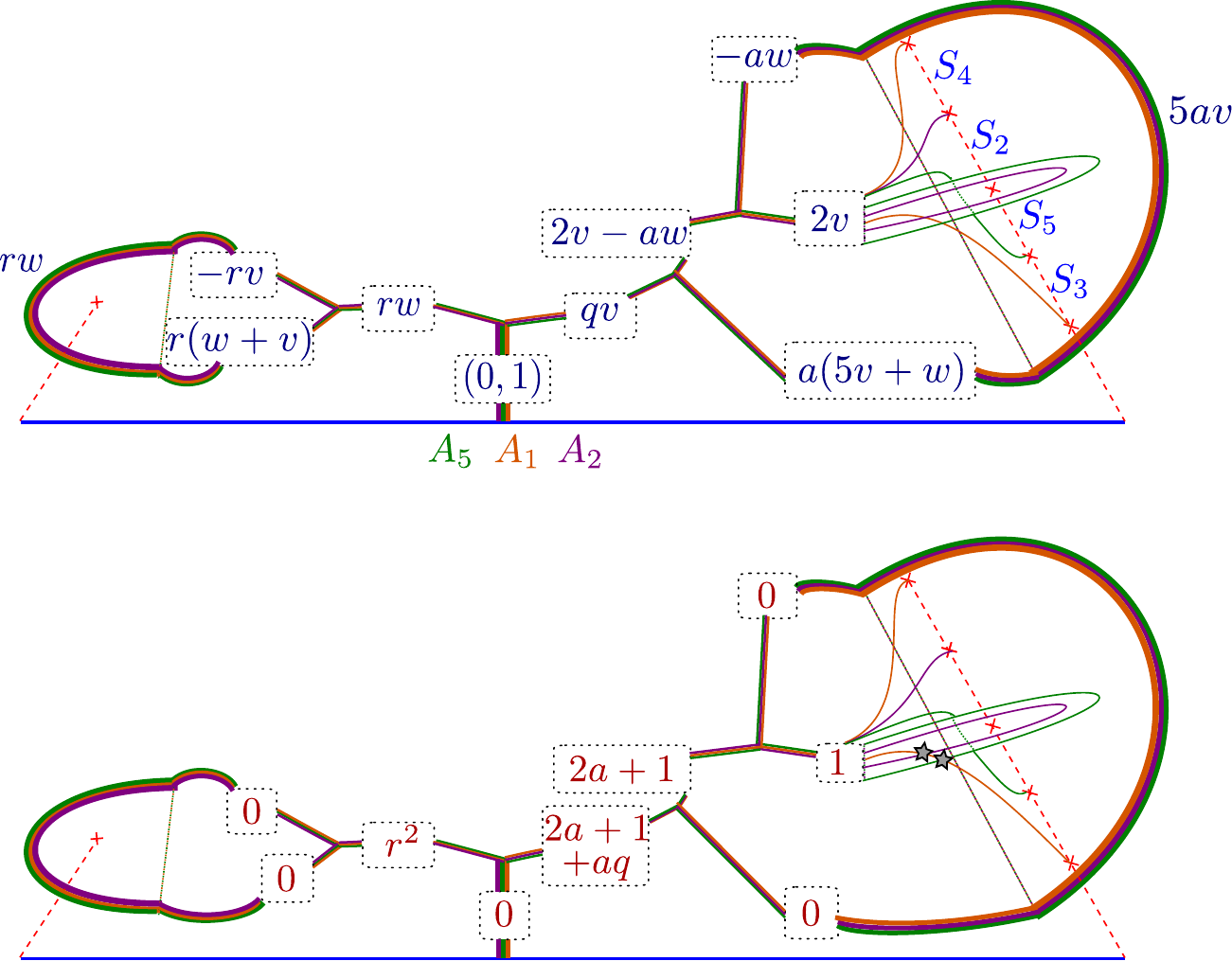}}

\caption{STCs in classes $A_1$, $A_2$ and $A_5$ in a neighborhood $\fN$ of the edge 
opposite to the smooth corner in a arbitrary ATF of $\BlIV$ associated with a 
solution of $1 + q^2 + 5r^2 = 5qr$.}
\label{fig:4Blup_A1A2A5} 
\end{center} 
\end{figure}

\begin{proposition} \label{prp:4Blup_A1A2A5} There exists a 3-chain of symplectic-tropical
curves inside the edge neighborhood $\fN$ whose associated symplectic curves belong to the exceptional classes $A_1 = H - E_1 - E_4$, $A_2 = E_4$, and  
$A_5 = E_1$, and their pairwise intersections equal their geometric intersections. \end{proposition}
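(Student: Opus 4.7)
The plan is to mirror the strategy used for Propositions \ref{prp:PxP_H1H2} and \ref{prp:3Blup_A1A2A5}: extract the relevant arithmetic from the Markov-type equation $1+q^2+5r^2 = 5qr$, construct an explicit symplectic-tropical diagram in $\fN$ with three legs realizing the classes $A_1,A_2,A_5$, and then balance the linking numbers at each interior vertex using Proposition \ref{prp:Link_dimer} and Proposition \ref{prp:link+d} so that the total geometric intersection of any two curves in the chain agrees with the homological intersection computed in $H_2(\BlIV;\Z)$.

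First I would analyze the combinatorics near the edge $\fN$. Since here $G/(n_2 n_3) = 5/5 = 1$, Equation \eqref{eq:lm} gives $v\wedge w = 1$ for the primitive cut vectors $v=(-r,m)$ and $w=(q,-l)$, so the balancing condition at the interior trivalent vertex associated to the $(0,1)$-leg reads $qv + rw + (0,1)=0$. Vieta jumping modulo the coefficients then forces $q^2 \equiv -1 \pmod 5$ (so $q \equiv 2$ or $3 \bmod 5$) and $r \mid 1+q^2$, which lets me split into two cases, $q = 5a + 2$ and $q = 5a+3$ (with $r$ determined accordingly). These congruences control how many nodes inside each cut the three curves are forced to visit, which in turn determines the multiplicities of the leaves at the nodes via Proposition \ref{prp:Disjoint_S_i}. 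In particular, the intersection pattern with the internal Lagrangian spheres is selected as in the proof of Theorem \ref{wSTCs}, so that the homology classes come out exactly as $A_1 = H-E_1-E_4$, $A_2 = E_4$, and $A_5 = E_1$.

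Next, following the template of Figures \ref{fig:3Blup_E1E2B2B3_q1} and \ref{fig:3Blup_E1E2B2B3_q2}, I would draw the three STCs side-by-side inside $\fN$. The top diagram records the cycle class at each edge, while the bottom diagram records the linking numbers between parallel copies of cycles obtained from the deformation procedures of Subsection \ref{subsec:Chains_STC_Prelim}. For each case modulo $5$, I introduce an integer $\varepsilon$ and $\delta$ satisfying a small partition relation (analogous to $\delta + \varepsilon = 1$ or $2$ in $\BlIII$), which prescribes exactly where the geometric intersections (the stars in the figure) between the three curves sit. Each such intersection corresponds to the unique place where Proposition \ref{prp:link+d} forces the linking number of the adjacent cycle to jump by one, and Remark \ref{rmk:n_dimerLink} guarantees that the three resulting surfaces are mutually disjoint away from those controlled crossings.

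The main obstacle, and the part that genuinely requires bookkeeping, will be the compatibility verification at the interior trivalent vertex: the total linking accrued around the boundary of each sphere must reproduce the Markov-type equation $1+q^2+5r^2 = 5qr$. Concretely, writing $q = 5a+c$ with $c\in\{2,3\}$ and $r$ adjusted to satisfy $r \mid 1+q^2$, the intersection count $qr$ at the vertex must split as a sum of the form
\[
qr \;=\; \tfrac{5r^2 + q^2 + 1}{5} \;=\; (\text{contribution from } v\text{-leg}) + (\text{from } w\text{-leg}) + (\text{from } (0,1)\text{-leg}) + \varepsilon + \delta,
\]
exactly as in the $\BlIII$ computation. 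I expect this identity to reduce, after clearing denominators, to the defining Diophantine equation, thereby confirming consistency. Once both residue cases close up this way, the three symplectic surfaces produced by Theorem \ref{prp:symp_trop} from the three STCs automatically have total intersection equal to the homological intersection $A_i \cdot A_j$, proving the proposition.
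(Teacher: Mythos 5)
Your proposal follows the same route as the paper's proof: extract $v\wedge w=1$ from Equation \eqref{eq:lm}, split into the two residue cases $q\equiv 2$ and $q\equiv 3\bmod 5$, draw the 3-chain of STCs in $\fN$ using the linking constructions of Propositions \ref{prp:Link_dimer} and \ref{prp:link+d}, and verify the compatibility identity at the interior vertex reduces to $qr=\frac{5r^2+q^2+1}{5}$. One small miscalibration: unlike the $\BlIII$ case, the $\BlIV$ compatibility identity in the paper carries no extra $\varepsilon+\delta$ partition terms (it reads simply $qr = r^2 + 2a + 1 + aq$ for $q=5a+2$, and $qr = r^2 + 3a + 2 + aq$ for $q=5a+3$), reflecting that the 3-chain here has a different intersection pattern than the 4-chain in $\BlIII$; this does not affect the correctness of your overall strategy.
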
 

\begin{proof} Let us revisit the specific combinatorics of the situation: the associated Markov type equation of interest is $1 + q^2 + 5r^2 = 5qr$, 
and from Equation \eqref{eq:lm}, the determinant between the associated vectors 
$v = (-r,m)$ and $w = (q, -l)$ is $v \wedge w = 1$. As above, Proposition \ref{prop:arithmetic} shows that $q^2 \equiv -1 \mod 5$, 
and thus $q \equiv 2$ or $3 \mod 5$. Figure \ref{fig:4Blup_A1A2A5} shows the case $q = 2 + 5a$. 

\begin{figure}[h!]   
  \begin{center} 

 \centerline{\includegraphics[scale=0.5]{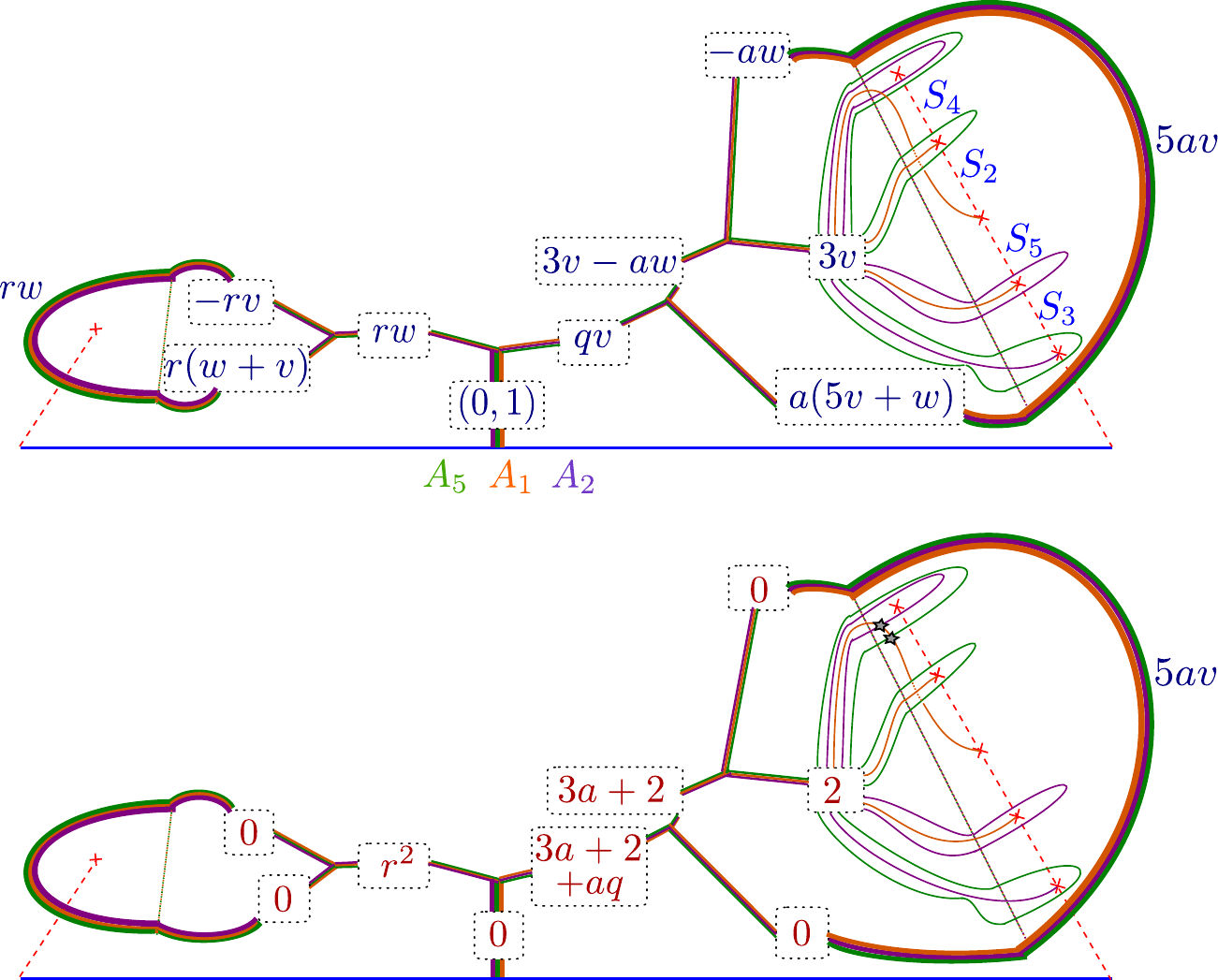}}

\caption{STCs in classes $A_1$, $A_2$ and $A_5$ in a neighborhood $\fN$ of the edge 
opposite to the smooth corner in a arbitrary ATF of $\BlIV$ associated with a 
solution of $1 + q^2 + 5r^2 = 5qr$.}
\label{fig:4Blup_A1A2A5_q3} 
\end{center} 
\end{figure}
 
As before, we have the compatibility associated with the $(0,1)$ cycles, using $qv\wedge rw = 
qr$, and giving the associated Diophantine equation:

\[ qr = r^2 + 2a + 1 + aq = \frac{5r^2 + (q - 2)(q+2) + 5}{5} =  \frac{5r^2 + q^2 + 1}{5}.
\]

For the second case, Figure \ref{fig:4Blup_A1A2A5_q3} shows the case $q = 3 + 5a$, and the compatibility becomes: 

\[ qr = r^2 + 3a + 2 + aq = \frac{5r^2 + (q - 3)(q+3) + 10}{5} =  \frac{5r^2 + q^2 + 1}{5}. 
\]

This concludes the verification for the case of $\BlIV$.
\end{proof}

\bibliographystyle{plain}
\bibliography{CasalsVianna_SharpEllipsoids}
\end{document}